\newcommand\shorttitle{Some spaces of polynomial knots}
\newcommand\authors{Hitesh Raundal and Rama Mishra}
\title{Some spaces of polynomial knots}
\author{Hitesh Raundal{$^\ast$} and Rama Mishra{$^\dagger$}}
\date{{\it\small Department of Mathematics, Indian Institute of Science Education and}\\ {\it\small Research, Dr. Homi Bhabha Road, Pashan, Pune - 411008,  India.}\\{\it\footnotesize{$^\ast$}hiteshrndl@gmail.com\hskip2mm and\hskip2mm {$^\dagger$}ramamishra64@gmail.com}}
\numberwithin{equation}{section}
\newtheorem{theorem}{Theorem}[section]  
\newtheorem{lemma}[theorem]{Lemma}
\newtheorem{proposition}[theorem]{Proposition}
\newtheorem{definition}[theorem]{Definition}
\newtheorem{remark}[theorem]{Remark}
\newtheorem{corollary}[theorem]{Corollary}
\newcommand{\mcup}{\mathbin{\scalebox{0.9}{\ensuremath{\bigcup}}}}
\newcommand\xbar[1]{\hbox{\vbox{\hrule height0.4pt\kern0.25ex\hbox{\kern-0.17em    %
\ensuremath{#1}\kern-0.05em}}}}  
\newcommand{\hp}{\hskip0.2mm}
\newcommand{\hf}{\hskip0.4mm}
\newcommand{\hs}{\hskip0.6mm}
\newcommand{\ho}{\hskip1mm}
\newcommand{\hw}{\hskip2mm}
\newcommand{\vo}{\vskip1mm}
\newcommand{\vw}{\vskip2mm}
\newcommand{\spm}{\mathcal{A}}
\newcommand{\spk}{\mathcal{P}}
\newcommand{\kd}{\mathcal{K}_d}
\newcommand{\vd}{\mathcal{V}_d}
\newcommand{\ad}{\mathcal{A}_d}
\newcommand{\bd}{\mathcal{B}_d}
\newcommand{\cd}{\mathcal{C}_d}
\newcommand{\od}{\mathcal{O}_d}
\newcommand{\pd}{\mathcal{P}_d}
\newcommand{\qd}{\mathcal{Q}_d}
\newcommand{\ro}{\mathbb{R}}
\newcommand{\rw}{\mathbb{R}^2}
\newcommand{\rt}{\mathbb{R}^3}
\newcommand{\rtd}{\mathbb{R}^{3d}}
\newcommand{\so}{S^1}
\newcommand{\sw}{S^2}
\newcommand{\st}{S^3}
\newcommand{\zo}{I}
\newcommand{\ozo}{(0,1)}
\newcommand{\cmoo}{\{\hp-1,1\hp\}}
\newcommand{\fgh}{(f, g, h)}
\newcommand{\abc}{(a_0,a_1,\ldots,a_{d-2},b_0,b_1,\ldots,b_{d-1},c_0,c_1,\ldots,c_d)}
\newcommand{\ba}{\mathbf{a}}
\newcommand{\bb}{\mathbf{b}}
\newcommand{\bc}{\mathbf{c}}
\newcommand{\babc}{(\ba,\bb,\bc)}
\newcommand{\abcst}{(\ba,\bb,\bc,s,t)} 
\newcommand{\fght}{\big(\hp f(t),\hp g(t),\hp h(t)\hp\big)}
\newcommand{\fgmht}{\big(\hp f(t),\hp g(t),\hp -h(t)\hp\big)}
\newcommand{\uvwt}{\big(\hp u(t),\hp v(t),\hp w(t)\hp\big)}
\newcommand{\xyzt}{\big(\hp x(t),\hp y(t),\hp z(t)\hp\big)}
\newcommand{\at}{a_0+a_1t+\cdots+a_{d-2}t^{d-2}}
\newcommand{\bt}{b_0+b_1t+\cdots+b_{d-1}t^{d-1}} 
\newcommand{\ct}{c_0+c_1t+\cdots+c_dt^d} 
\newcommand{\abct}{\left(\at,\,\bt,\,\ct\right)}
\newcommand{\past}{a_1+a_2\left(s+t\right)+\cdots+a_{d-2}\left(s^{d-3}+s^{d-4}t+\cdots+t^{d-3}\right)}
\newcommand{\pbst}{b_1+b_2\left(s+t\right)+\cdots+b_{d-1}\left(s^{d-2}+s^{d-3}t+\cdots+t^{d-2}\right)}
\newcommand{\pcst}{c_1+c_2\left(s+t\right)+\cdots+c_d\left(s^{d-1}+s^{d-2}t+\cdots+t^{d-1}\right)}\newcommand{\phiabc}{\phi_{\ba\bb\bc}}
\begin{document}
	
\maketitle

\begin{abstract}
{\it In this paper we study the topology of three different kinds of spaces associated to polynomial knots of degree at most $d$, for $d\geq2$. We denote these spaces by $\od$, $\pd$ and $\qd$. For $d\geq3$, we show that the spaces $\od$ and $\pd$ are path connected and the space $\od$ has the same homotopy type as $\sw$. Considering the space $\spk=\mcup_{d\geq2}\od$ of all polynomial knots with the inductive limit topology, we prove that it too has the same homotopy type as $\sw$. We also show that if two polynomial knots are path equivalent in $\qd$, then they are topologically equivalent. Furthermore, the number of path components in $\qd$ are in multiples of eight.}
\end{abstract}
\noindent{\bf Keywords:} Polynomial knot; polynomial representation; homotopy; isotopy.\\[4pt]
\noindent{\bf Mathematics Subject Classification 2010:} 14P10, 55P15, 57M25, 57R40, 57R52.
%%%%%%%%%%%%%%%%%%%%%%%%%%%%%%%%%%%%%%%%%%%%%%%%%%%%%%%%%%%%%%%%%%%%%%%%%%%%%%%%%%%%%%%%%%%%%%%

\section{Introduction}\label{sec1}

Parameterizing knots has been useful in estimating some important knot invariants such as bridge index \cite{hs}, superbridge index \cite{nk} and geometric degree \cite{jw}. Some of the interesting parameterizations are Fourier knots \cite{lk}, polygonal knots \cite{jac} and polynomial knots \cite{vav2}. The first two of them  provide classical knots, whereas the polynomial parametrization gives long knots. In each parametrization there is a positive integer $d$ associated to it. For Fourier knots and polynomial knots it is its {\it degree} and for polygonal knots it is its {\it edge number}. For each parametrization, one can study the space of all parametrized knots for a fixed $d$. Once we fix $d$, there will be only finitely many knots that can be parametrized with this $d$. However, in the space of parameterizations, one could study the topology and try to see if two parametrized knots in this space belong to the same path components or not. In \cite{jac}, Calvo studied the spaces of polygonal knots. We aim to study the topology of some spaces of polynomial knots. In \cite{vav2}, Vassiliev defined the space $\vd$ to be the interior of the set of all smooth embeddings in the space $\mathcal{W}_d$ of all polynomial maps of the type $t\mapsto\left(t^d+a_{d-1}t^{d-1}+\cdots+a_1t,\hf t^d+b_{d-1}t^{d-1}+\cdots+b_1t,\hf t^d+c_{d-1}t^{d-1}+\cdots+c_1t\right)$. He pointed out that the space of long knots can be approximated by the spaces $\vd$ for $d\geq1$ (see \cite{vav1}). Also, it was noted that if two polynomial knots belong to the same path component of $\vd$ then they represent the same knot-type. It is felt that the converse of this may not be true. However, no counter example is known. Regarding the topology of the space $\vd$, he showed that the space $\mathcal{V}_3$ is contractible and the space $\mathcal{V}_4$ is homology equivalent to $\so$.\vskip0.1mm
Later, Durfee and O'Shea \cite{do} introduced a different space $\kd$ which is the space of all knots $t\mapsto\left(a_0+a_1t+\cdots+a_dt^d,\,b_0+b_1t+\cdots+b_dt^d,\,c_0+c_1t+\cdots+c_dt^d\right)$ such that $\left|a_d\right|+\left|b_d\right|+\left|c_d\right|\neq0$. If two polynomial knots are path equivalent in $\kd$, then they are topologically equivalent. Thus, the space $\kd$ will have at least as many path components as knot-types that can be represented in it. In \cite{sky}, a group of undergraduate students proved that $\mathcal{K}_5$ has at least $3$ path components corresponding to the unknot, the right hand trefoil and the left hand trefoil respectively. Beyond this the topology of these spaces is not understood.  Also, it is not clear whether two topologically equivalent knots in this space necessarily belong to the same path component or not. Composing a polynomial knot with a simple polynomial automorphism of $\rt$ can reduce the degree of two of the components and the resulting polynomial knot will be topologically equivalent to the earlier one. Keeping this view in mind, we note that any polynomial knot given by $t\mapsto\left(a_0+a_1t+\cdots+a_dt^d,\,b_0+b_1t+\cdots+b_dt^d,\,c_0+c_1t+\cdots+c_dt^d\right)$, for $d\geq2$, is topologically equivalent to a polynomial knot $t\mapsto\fght$ with $\deg(f)\leq d-2,$ $\deg(g)\leq d-1$ and $\deg(h)\leq d$. This motivated us to study the topology of three interesting spaces namely: (1) the space $\od$ of all polynomial knots $t\mapsto\fght$ with $\deg(f)\leq d-2,\hf\deg(g)\leq d-1$ and $\deg(h)\leq d$, (2) the space $\pd$ of all polynomial knots $t\mapsto\uvwt$ with $\deg(u)<\deg(v)<deg(w)\leq d$, and (3) the space $\qd$ of all polynomial knots $t\mapsto\xyzt$ with $\deg(x)=d-2,\hf\deg(y)=d-1$ and $\deg(z)=d$. Using the theory of real semialgebraic sets (see \cite{mc} and \cite{br}), we ensure that these spaces must have finitely many path components. We show that the space $\od$, for $d\geq 3$, has the same homotopy type as $\sw$. We also prove that the space $\pd$, for $d\geq 3$, is path connected, whereas the space $\qd$ is not path connected. For the space $\qd$, for $d\geq2$, we show that if two polynomial knots lie in the same path component then they are topologically equivalent. We provide a counter example that the converse of this is not true. Furthermore, we show that the space $\spk$ of all polynomial knots, with the inductive limit topology coming from the stratification $\spk=\mcup_{d\geq2}\od$, also has the same homotopy type as $\sw$.\vskip0.1mm This paper is organized as follows: Section 2 is about definitions and known results. We divide it in three subsections. In 2.1 and 2.2, we provide the basic terminologies and some known results related to knots and in particular polynomial knots which will be required in this paper. In 2.3, we discuss real semialgebraic sets and mention some important results from real semialgebraic geometry. In Section 3, we introduce our spaces $\od,\hp\pd$ and $\qd$ for $d\geq2$ and check their basic topological properties. At the end of Section 3, we show that these spaces are homeomorphic to some semialgebraic subsets of $\ro^{3d}$ and hence they have only finitely many path components. In Section 4, we prove our main results which are the following:\\[4pt]
{\bf Theorem \ref{th4}}: {\it The space $\pd$, for $d\geq3$, is path connected.}\\[4pt]
{\bf Theorem \ref{th13}}: {\it If two polynomial knots are path equivalent in $\qd$, then they are topologically equivalent.}\\[4pt]
{\bf Theorem \ref{th15}}: {\it Every polynomial knot is isotopic to some trivial polynomial knot by a smooth isotopy of polynomial knots.}\\[4pt]
{\bf Corollary \ref{th22}}: {\it Every polynomial knot is connected to a trivial polynomial knot by a smooth path in the space $\spk$ of all polynomial knots.}\\[4pt]
{\bf Theorem \ref{th3}}: {\it The space $\od$, for $d\geq3$, has the same homotopy type as $\sw$.}\\[4pt]
{\bf Corollary \ref{th20}}: {\it The space $\spk$ of all polynomial knots has the same homotopy type as $\sw$.}\\[4pt]
{\bf Theorem \ref{th5.13}}: {\it The path components of the space $\mathcal{Q}_4$ are contractible.}\\[4pt]
{\bf Proposition \ref{th5.15}}: {\it The space $\mathcal{Q}_5$ has exactly eight path components corresponding to the trefoil knot and its mirror image. Moreover, all these path components are contractible.}
%%%%%%%%%%%%%%%%%%%%%%%%%%%%%%%%%%%%%%%%%%%%%%%%%%%%%%%%%%%%%%%%%%%%%%%%%%%%%%%%%%%%%%%%%%%%%%%%

\section{Definitions and Known Results}\label{sec2}

\subsection{Knots and their isotopies}\label{sec2.1}

\begin{definition}
A tame knot is a continuous embedding $\kappa:\so\to\st$ which can be extended to a continuous embedding $\bar{\kappa}:\so\times B^2\to\st$ of a tubular neighborhood of $\so$ in $\rt$, where $B^2$ is the open unit disc in $\rw$.
\end{definition}

It is known that every piecewise $C^1$ embedding of $S^1$ in $S^3$ is a tame knot (see \cite[Lemma~2]{do} and \cite[Appendix~I]{cf}). In particular, all smooth embeddings (that is, $C^\infty$ embeddings) of $S^1$ in $S^3$ are tame knots.

\begin{definition}
A long knot is a proper smooth embedding $\phi:\ro\to\rt$ such that the map $t\mapsto\left\|\phi(t)\right\|$ is strictly monotone outside some closed interval $[a,b]\subset\ro$ and $\left\|\phi(t)\right\|\to\infty$ as $\left| t\right| \to\infty$.
\end{definition}

\begin{definition}\label{def1}
A string isotopy of tame knots is a continuous map $F:\zo\times\so\to\st$ such that for each $s\in\zo$, the map $F_s=F(s,\hf\cdot\hf)$ is a tame knot.
\end{definition}

\begin{definition}\label{def2}
Two tame knots $\phi,\psi:\so\to\st$ are string isotopic if there is a string isotopy $F:\zo\times\so\to\st$ of tame knots such that $F_0=\phi$ and $F_1=\psi$.
\end{definition}

\begin{definition}\label{def3}
A homeotopy (respectively, diffeotopy) of the space $\st$ is a continuous map $H:\zo\times\st\to\st$ such that:
\begin{enumerate}[(1)]
\item for each $s\in\zo$, the map $H_s=H(s,\hf\cdot\hf)$ is a homeomorphism (respectively, a diffeomorphism) of $\st$, and
\item the map $H_0=H(0,\hf\cdot\hf)$ is the identity map of $\st$.
\end{enumerate}
\end{definition}

Note that self-homeomorphisms and self-diffeomorphisms are automorphisms in continuous and smooth categories respectively, and homeotopies and diffeotopies are ambient isotopies in the respective categories.

\begin{definition}\label{def4}
Two tame knots $\phi,\psi:\so\to\st$ are said to be ambient isotopic if there is a homeotopy $H:\zo\times\st\to\st$ of the ambient space $\st$ such that $\psi=H_1\circ\phi$.
\end{definition}

Similar terms as in Definitions \ref{def1} to \ref{def4} can be defined for the category of smooth knots $\so\hookrightarrow\st$ and also for the category of long knots $\ro\hookrightarrow\rt$. In these categories, the automorphisms of the spaces are self-diffeomorphisms instead of self-homeomorphisms. Also, for these categories, the string isotopies are isotopies of knots in the same category, and the ambient isotopies are diffeotopies instead of homeotopies.\vskip0.1mm
Every long knot $\phi:\ro\to\rt$ can be extended to a unique embedding $\tilde{\phi}:\so\to\st$ by the inverse of the stereographic projection from the north pole of $\st$. On the other hand, every ambient isotopy class of tame knots contains a smooth knot $\psi:\so\to\st$ which fixes the north poles and has nonzero derivative at the north pole. The restriction $\hat{\psi}:\ro\to\rt$ of $\psi$ is a long knot. Note that $\hat{\tilde{\phi}}\simeq\phi$ and $\tilde{\hat{\psi}}\simeq\psi$, where \textquoteleft$\simeq$\textquoteright\, denotes that one knot is ambient isotopic to the other. Thus, there is bijection between the ambient isotopy classes of tame knots and the ambient isotopy classes of long knots.
%%%%%%%%%%%%%%%%%%%%%%%%%%%%%%%%%%%%%%%%%%%%%%%%%%%%%%%%%%%%%%%%%%%%%%%%%%%%%%%%%%%%%%%%%%%%%%%

\subsection{Polynomial knots}\label{sec2.2}

\begin{definition} 
A polynomial map in $\rt$ is a map $\phi:\ro\to\rt$ whose component functions are real polynomials.
\end{definition}

\begin{definition}
A polynomial knot in $\rt$ is a polynomial map $\phi:\ro\to\rt$ which is a smooth embedding; that is, it does not have a multiple or a critical point. 
\end{definition}

\begin{definition} 
A polynomial knot $\phi$ is said to represent a knot-type $[\kappa]$ (that is, the ambient isotopy class of $\kappa$) if the extended knot $\tilde{\phi}:\so\to\st$ is ambient isotopic to $\kappa$. In this case, the knot $\phi$ is a polynomial representation of the knot-type $[\kappa]$. 
\end{definition}

It was proved that every long knot is ambient isotopic to some polynomial knot (see \cite{ars}). This implies that every knot-type can be represented by some polynomial knot. In other words, every knot-type has a polynomial representation.\vskip0.1mm

If a polynomial knot $\phi$ given by $t\mapsto\fght$ represents a knot $\kappa$, then the knot $t\mapsto\fgmht$ represents a mirror image $\kappa^*$ of $\kappa$. In general, for an affine transformation (a composition of an invertible linear transformation and a translation) $S:\rt\to\rt$, the polynomial knot $S\circ\phi$ represents either $\kappa$ or $\kappa^*$ depending on whether $S$ is orientation preserving or not. 

\begin{definition}
A polynomial map $t\mapsto\fght$ is said to have degree sequence $(d_1,d_2,d_3)$ if $\deg(f)=d_1$, $\deg(g)=d_2$ and $\deg(h)=d_3$.
\end{definition}  

\begin{definition}
The degree of a polynomial map $\phi:\ro\to\rt$ is the maximum of the degrees of its component polynomials.
\end{definition}

\begin{definition} 
A knot-type $[\kappa]$ is said to have polynomial degree $d$, if there exists a polynomial knot $\phi$ of degree $d$ which represents it and there is no polynomial knot of degree less than $d$ representing it. In this case, the polynomial knot $\phi$ is a minimal polynomial representation of the knot-type $[\kappa]$.
\end{definition}

For example, the polynomial degree of a trivial knot is $1$ and $t\mapsto (0,0,t)$ is its one of the minimal polynomial representation.

 \begin{definition}
 A polynomial isotopy is a continuous map $H:\zo\times\ro\to\rt$ such that $H_s=H(s,\hf\cdot\hf)$ is a polynomial knot for each $s\in\zo$.
 \end{definition}
 
 \begin{definition}
 Two polynomial knots $\phi$ and $\psi$ are polynomially isotopic if there is a polynomial isotopy $H:\zo\times\ro\to\rt$ such that $H_0=\phi$ and $H_1=\psi$.
 \end{definition}

\begin{definition}
A polynomial automorphism is a bijective map $T:\mathbb{R}^n\rightarrow\mathbb{R}^n$ whose component functions are real polynomials in $n$ variables and it is such that its inverse is also of the same kind. 
\end{definition}

Note that a map $S:\ro\to\ro$\, is a polynomial automorphism if and only if it is a linear polynomial. Also, it is easy to see that an affine transformation (a composition of an invertible linear transformation and a translation) of $\mathbb{R}^n$ is a polynomial automorphism.\vskip0.1mm
Regarding the polynomial maps and the polynomial automorphisms of $\ro$ and $\rt$, the following remarks can easily be verified:
\begin{remark}\label{rm1} 
Let $\phi:\ro\rightarrow\rt$ be a polynomial map, and let $S:\ro\rightarrow\ro$ and $T:\rt\rightarrow\rt$ be polynomial automorphisms. Then the map $\phi$ is a smooth embedding if and only if the composition $T\circ\phi\circ S$ is a smooth embedding.
\end{remark}

\begin{remark}\label{rm2}
Let $\alpha$ and $\gamma $ be real numbers and let $\alpha\neq 0$. Then for a polynomial knot  $\phi:\ro\rightarrow\rt$, the map $\psi:\ro\rightarrow\rt$ given by $t\mapsto\phi(\alpha t+\gamma)$ is also a polynomial knot.
\end{remark} 

\begin{remark}\label{rm3}  
Let $\phi:\ro\rightarrow\rt$ be a polynomial knot. Then for an affine transformation $T:\rt\rightarrow\rt$, the map $T\circ\phi$ is also a polynomial knot.
\end{remark} 

Let $\phi:\ro\to\rt$ be a polynomial knot. Then for an orientation preserving polynomial automorphisms $S:\ro\to\ro$ and $T:\rt\to\rt$ (that is, $S$ is a linear polynomial with positive leading coefficient and $\det\big(\frac{\partial T_i}{\partial x_j}(x)\big)>0$ for all $x\in\rt$), the polynomial knot $T\circ\phi\circ S$ is topologically equivalent to $\phi$. In particular, the same is true if $S$ and $T$ are orientation preserving affine transformations.\vskip0.1mm 
We provide the following proposition (see \cite[Proposition 3]{do}) which we will be using in this paper.

\begin{proposition}\label{th16}
Let $\{\alpha_s\mid s\in\zo\}$ be a family of polynomial knots depending continuously on parameter $s\in\zo$. If the degree of $\alpha_s$ is independent of $s$, then there exists $r_0\gg0$ such that for any $s\in\zo$ and any $\left|t\right|\geq r_0$, the vector $\alpha_s'(t)\in\rt$ intersects transversely to the sphere of radius $\left|\alpha_s(t)\right|$ about the origin. Moreover, for any $s\in\zo$, the angle between the vectors $\alpha_s(t)$ and $\alpha_s'(t)$ approaches $0$ as $t\to+\infty$ and it approaches $\pi$ as $t\to-\infty$. 
\end{proposition}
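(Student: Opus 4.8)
The plan is to reduce both assertions to the behaviour of the single scalar quantity $\langle\alpha_s(t),\alpha_s'(t)\rangle$. Indeed, the tangent plane to the sphere of radius $\lvert\alpha_s(t)\rvert$ about the origin at the point $\alpha_s(t)$ is the orthogonal complement of $\alpha_s(t)$, so the vector $\alpha_s'(t)$ is transverse to that sphere precisely when $\langle\alpha_s(t),\alpha_s'(t)\rangle\neq0$; and the cosine of the angle between the two vectors is $\langle\alpha_s(t),\alpha_s'(t)\rangle\big/\big(\lvert\alpha_s(t)\rvert\,\lvert\alpha_s'(t)\rvert\big)$. First I would write $\alpha_s(t)=(f_s(t),g_s(t),h_s(t))$, each component a polynomial of degree at most $d$ whose coefficients depend continuously on $s$, and observe that $\langle\alpha_s(t),\alpha_s'(t)\rangle=\tfrac12\big(\lvert\alpha_s(t)\rvert^2\big)'$ is a polynomial in $t$ of degree $2d-1$.

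The key algebraic fact is that its leading coefficient equals $d\,C(s)$, where $C(s)$ is the sum of the squares of the coefficients of $t^d$ in $f_s,g_s,h_s$. The hypothesis that $\deg(\alpha_s)=d$ for every $s$ says precisely that at least one of those three top coefficients is nonzero, so $C(s)>0$ for all $s\in\zo$; since $C$ is continuous on the compact interval $\zo$, it is bounded below by some $C_0>0$. The remaining coefficients of $\langle\alpha_s(t),\alpha_s'(t)\rangle$ are quadratic expressions in the coefficients of $\alpha_s$, hence continuous in $s$ and so uniformly bounded on $\zo$. A standard dominant-term estimate then produces a single $r_0\gg0$ such that for all $s\in\zo$ and all $\lvert t\rvert\geq r_0$ the leading term $d\,C(s)\,t^{2d-1}$ dominates, forcing $\langle\alpha_s(t),\alpha_s'(t)\rangle\neq0$ with the same sign as $t$. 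This is the transversality claim, and the uniformity in $s$ is exactly what compactness buys us.

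For the statement about the angle I would, for each fixed $s$, compute the leading asymptotics of the three quantities in the cosine as $t\to\pm\infty$: the numerator behaves like $d\,C(s)\,t^{2d-1}$, while $\lvert\alpha_s(t)\rvert\sim\sqrt{C(s)}\,\lvert t\rvert^{d}$ and $\lvert\alpha_s'(t)\rvert\sim d\sqrt{C(s)}\,\lvert t\rvert^{d-1}$ (the coefficient of $t^{2d-2}$ in $\lvert\alpha_s'(t)\rvert^2$ is again $d^2C(s)$, positive by the same reasoning; note $\alpha_s'$ never vanishes, as $\alpha_s$ is an immersion). Dividing, the factor $d\,C(s)$ cancels and $\cos\theta_s(t)\to t^{2d-1}/\lvert t\rvert^{2d-1}$, which equals $+1$ as $t\to+\infty$ and $-1$ as $t\to-\infty$ because $2d-1$ is odd; hence $\theta_s(t)\to0$ and $\theta_s(t)\to\pi$ respectively.

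I expect the only genuine obstacle to be the \emph{uniform} choice of $r_0$, whose crux is bounding $C(s)$ away from zero. It is worth emphasizing that this is where the hypothesis of constant degree is indispensable: were the degree allowed to drop for some parameter values, $C(s)$ could tend to $0$ and no single $r_0$ would suffice. Everything else is a routine leading-coefficient computation combined with continuity of the polynomial coefficients in $s$ and compactness of $\zo$.
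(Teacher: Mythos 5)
Your proposal is correct and follows essentially the same route as the paper: both reduce transversality to the non-vanishing of $\alpha_s(t)\cdot\alpha_s'(t)$, extract the leading coefficient $d\hp\|\ba_d(s)\|^2$ (your $d\,C(s)$), use continuity of the coefficients together with compactness of $\zo$ to bound it below and the remaining coefficients above, and finish the angle claim by the same leading-order asymptotics giving $\cos\theta_s(t)\to\pm1$. The only (cosmetic) difference is that you run the dominance argument on the unnormalized polynomial $\langle\alpha_s(t),\alpha_s'(t)\rangle$ of degree $2d-1$, whereas the paper bounds the normalized cosine itself below by $m^2/(4M^2)$, a slightly stronger uniform statement obtained by the same mechanism.
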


\begin{proof}
For $s\in\zo$, let $\alpha_s$ be given by
\begin{equation*}
\alpha_s(t)=\ba_d(s)\hp t^d+\ba_{d-1}(s)\hp t^{d-1}+\cdots+\ba_1(s)\hp t+\ba_0(s)
\end{equation*}
for $t\in\ro$, where $\ba_i$'s are continuous functions from $\zo$ to $\rt$ and $\ba_d(s)\neq0$ for all $s\in\zo$. For $s\in\zo$ and $\left|t\right|>0$, the cosine of the angle $\theta_s(t)$ between the vectors $\alpha_s(t)$ and $\alpha_s'(t)$ is
\begin{equation*}
\dfrac{\alpha_s(t)}{\left\|\alpha_s(t)\right\|}\cdot\dfrac{\alpha_s'(t)}{\left\|\alpha_s'(t)\right\|}
\end{equation*}
which becomes 
\begin{equation}\label{eq20}
\pm\dfrac{\ba_d(s)+\frac{1}{t}\ba_{d-1}(s)+\cdots+\frac{1}{t^d}\ba_0(s)}{\left\|\ba_d(s)+\frac{1}{t}\ba_{d-1}(s)+\cdots+\frac{1}{t^d}\ba_0(s)\right\|}\cdot\dfrac{d\hp\ba_d(s)+\frac{d-1}{t}\hp\ba_{d-1}(s)+\cdots+\frac{1}{t^{d-1}}\hp\ba_1(s)}{\left\|d\hp\ba_d(s)+\frac{d-1}{t}\hp\ba_{d-1}(s)+\cdots+\frac{1}{t^{d-1}}\hp\ba_1(s)\right\|}
\end{equation}
accordingly as $t$ is positive or negative. Thus, we get
\begin{align}
\left|\cos(\theta_s(t))\right|&\geq\dfrac{\left|d\left\|\ba_d(s)\right\|^2+\frac{1}{t}\hp f_1(s)+\frac{1}{t^2}\hp f_2(s)+\cdots+\frac{1}{t^{2d-1}}\hp f_{2d-1}(s)\right|}{d\left\|\ba_d(s)\right\|^2+\frac{1}{\left|t\right|}\hp h_1(s)+\frac{1}{\left|t^2\right|}\hp h_2(s)+\cdots+\frac{1}{\left|t^{2d-1}\right|}\hp h_{2d-1}(s)}\nonumber\\
&\geq\dfrac{\left|d\left\|\ba_d(s)\right\|^2-\left|\frac{1}{t}\hp f_1(s)+\frac{1}{t^2}\hp f_2(s)+\cdots+\frac{1}{t^{2d-1}}\hp f_{2d-1}(s)\right|\ho\right|}{d\left\|\ba_d(s)\right\|^2+\frac{1}{\left|t\right|}\hp h_1(s)+\frac{1}{\left|t^2\right|}\hp h_2(s)+\cdots+\frac{1}{\left|t^{2d-1}\right|}\hp h_{2d-1}(s)}\hs,\label{eq21}
\end{align}
where for each $i\in\{\hp1,2,\ldots,2d-1\hp\}$,
\begin{align*}
f_i(s)&=\sum_{j,k}N_{ijk}\ho\ba_j(s)\cdot\ba_k(s)\quad\mbox{and}\\ 
h_i(s)&=\sum_{j,k}M_{ijk}\left\|\ba_j(s)\right\|\left\|\ba_k(s)\right\|
\end{align*}
for $s\in\zo$. Note that $N_{ijk}$'s and $M_{ijk}$'s are nonnegative integers. Since $\ba_d$ is continuous and $\ba_d(s)\neq0$ for all $s\in\zo$, there exist some positive real numbers $m$ and $M$ such that 
\begin{equation}\label{eq22}
m\leq\left\|\ba_d(s)\right\|\leq M
\end{equation} 
for all $s\in\zo$. Note that $f_i$'s and $h_i$'s are continuous and bounded real valued functions, so for some $r_0\gg0$, we have 
\begin{align}
\left|\frac{1}{t}\hp f_1(s)+\frac{1}{t^2}\hp f_2(s)+\cdots+\frac{1}{t^{2d-1}}\hp f_{2d-1}(s)\right|&\leq\frac{d\hp m^2}{2}\quad\mbox{and}\label{eq23}\\[3pt]
\frac{1}{\left|t\right|}\hp h_1(s)+\frac{1}{\left|t^2\right|}\hp h_2(s)+\cdots+\frac{1}{\left|t^{2d-1}\right|}\hp h_{2d-1}(s)&\leq d\hp M^2\label{eq24}
\end{align}
for all $\left|t\right|\geq r_0$ and for all $s\in\zo$. Using Inequalities (\ref{eq22})-(\ref{eq24}) in Expression (\ref{eq21}) gives
\begin{align*}
\left|\cos(\theta_s(t))\right|\geq\dfrac{d\hp m^2-\frac{d\hp m^2}{2}}{d\hp M^2+ d\hp M^2}\geq\dfrac{m^2}{4\hp M^2}>0
\end{align*}
for all $\left|t\right|\geq r_0$ and for all $s\in\zo$. This proves the first statement. Also, for any $s\in\zo$, Expression (\ref{eq20}) approaches to
\begin{equation*}
\pm\dfrac{\ba_d(s)\cdot d\hp\ba_d(s)}{\left\|\ba_d(s)\right\|\left\|d\hp\ba_d(s)\right\|}=\pm1
\end{equation*}
accordingly as $t\to\pm\infty$. In other words, the angle $\theta_s(t)$ between the vectors $\alpha_s(t)$ and $\alpha_s'(t)$ approaches $0$ as $t\to+\infty$ and it approaches $\pi$ as $t\to-\infty$.
\end{proof}
%%%%%%%%%%%%%%%%%%%%%%%%%%%%%%%%%%%%%%%%%%%%%%%%%%%%%%%%%%%%%%%%%%%%%%%%%%%%%%%%%%%%%%%%%%%%%%%%%%

\subsection{Real semialgebraic sets}\label{sec2.3}

Algebraic geometry is the study of algebraic sets which are the sets of zeros of polynomials. When the polynomials are over the field of real numbers, the notion of algebraic sets can be extended to a bigger class of sets known as semialgebraic sets. This includes the sets of points which are solutions of some inequalities satisfied by polynomial functions. More precisely, the semialgebraic sets are defined as follows: 
\begin{definition}
Semialgebraic subsets of $\mathbb{R}^n$ form a smallest class $\mathcal{S}_n$ of subsets of $\mathbb{R}^n$ such that:
\begin{enumerate}[(1)]
\item If $P\in\mathbb{R}[X_1,\ldots,X_n]$, then $\big\{\hp x\in\mathbb{R}^n : P(x)=0\hp\big\}$ and $\big\{\hp x\in\mathbb{R}^n : P(x)>0\hp\big\}$ are in $\mathcal{S}_n$.
\item If $A\in\mathcal{S}_n$ and $B\in\mathcal{S}_n$, then $A\cup B$, $A\cap B$ and  $\mathbb{R}^n\setminus A$ are in $\mathcal{S}_n$.
\end{enumerate}
\end{definition}

For example, the lower region $A=\{(x,y)\in\rw\mid y\leq x^2-9\}$ of the plane which is bounded by the parabola $y=x^2-9$ is a semialgebraic subset of $\rw$. Also, a union of the upper half space $B=\{(x,y,z)\in\rt\mid z>0\}$ and the closed unit ball $C=\{(x,y,z)\in\rt\mid x^2+y^2+z^2\leq0\}$ is a semialgebraic subset of $\rt$.

A detailed exposure to the study of real semialgebraic sets can be found in \cite{mc} and \cite{br}. Some of the important results (proofs can be seen in \cite{mc} and \cite{br}) that will be used in this paper are stated below: 

\begin{theorem}[Tarski-Seidenberg: Second Form]\label{th37}
Let $A$ be a semialgebraic subset of $\mathbb{R}^{n+1}$ and let $\pi:\mathbb{R}^{n+1}\to\mathbb{R}^n$ be the projection onto the space of the first $n$ coordinates. Then $\pi(A)$ is a semialgebraic subset of $\mathbb{R}^n$.
\end{theorem}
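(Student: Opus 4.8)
The plan is to eliminate the single coordinate $T:=X_{n+1}$, since $\pi$ merely forgets the last coordinate of $\mathbb{R}^{n+1}$. First I would reduce to a basic building block: because $\pi(A\cup B)=\pi(A)\cup\pi(B)$ and finite unions of semialgebraic sets are semialgebraic, and because every semialgebraic $A\subseteq\mathbb{R}^{n+1}$ is a finite union of sets of the form
\[
\big\{(x,t): F_1(x,t)=\cdots=F_k(x,t)=0,\ G_1(x,t)>0,\ldots,G_\ell(x,t)>0\big\},
\]
it suffices to prove that the projection of one such set is semialgebraic. I would then gather all the polynomials occurring into a single finite family $\mathcal{P}=\{P_1,\ldots,P_s\}\subseteq\mathbb{R}[X_1,\ldots,X_n][T]$, viewed as polynomials in $T$ with coefficients in $\mathbb{R}[X_1,\ldots,X_n]$.

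The central observation is that, for a fixed $x\in\mathbb{R}^n$, whether some $t$ with $(x,t)\in A$ exists depends only on the combinatorics of the univariate polynomials $P_1(x,T),\ldots,P_s(x,T)$: the number and relative order of their real roots, and the sign ($-$, $0$ or $+$) of each $P_j(x,\cdot)$ on every interval cut out by the union of all these roots. I would package this data into a finite object, the \emph{sign diagram} of $\mathcal{P}$ at $x$. Reading off the defining sign conditions of $A$, one sees that $\pi(A)$ is exactly the set of those $x$ whose sign diagram is one of the finitely many diagrams for which some $t$ satisfies all the required (in)equalities.

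The key step is to show that the sign diagram is constant on each piece of a finite semialgebraic partition of $\mathbb{R}^n$. Concretely, I would construct a finite auxiliary family $\mathcal{Q}\subseteq\mathbb{R}[X_1,\ldots,X_n]$ such that the vector of signs $\big(\operatorname{sign}Q(x)\big)_{Q\in\mathcal{Q}}$ determines the sign diagram of $\mathcal{P}$ at $x$. The family $\mathcal{Q}$ is obtained from $\mathcal{P}$ by closing under: (i) the leading coefficients in $T$, which govern how the degrees $\deg_T P_j(x,\cdot)$ can drop; (ii) the $T$-derivatives $P_j'$; and (iii) the principal subresultant coefficients of the pairs $(P_j,P_k)$ and of the pairs $(P_j,P_j')$, all of which are polynomials in $X_1,\ldots,X_n$. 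I would prove the claim by induction on $d=\max_j\deg_T P_j$. On the semialgebraic open set where a degree-$d$ leading coefficient $a(x)$ is nonzero, the degrees are stable, and standard Sturm-type counting — phrased through Tarski queries $\operatorname{TaQ}(P_k,P_j)$, each evaluated by the signs of the subresultant coefficients — determines the number of real roots, their common-root incidences (detected by resultants), their order, and the intervening signs; hence the diagram is constant there. On the complementary locus $a(x)=0$ the $T$-degree of that polynomial drops, and the inductive hypothesis applies to the truncated family. Patching the finitely many semialgebraic pieces produces the partition and the family $\mathcal{Q}$.

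For each sign vector $\epsilon$ the stratum $S_\epsilon=\{x:\operatorname{sign}Q(x)=\epsilon(Q)\text{ for all }Q\in\mathcal{Q}\}$ is semialgebraic, and on $S_\epsilon$ the existence of an admissible $t$ is a single yes/no answer read off the constant sign diagram; thus $\pi(A)$ is the finite union of those $S_\epsilon$ whose diagram admits an admissible $t$, and is therefore semialgebraic. The main obstacle is precisely the key step: producing one finite family $\mathcal{Q}\subseteq\mathbb{R}[X_1,\ldots,X_n]$ that controls the entire sign diagram \emph{uniformly} in $x$. The difficulty is the uniform handling of degree degeneration — as $x$ moves, leading coefficients vanish, the degrees of the $P_j(x,\cdot)$ drop, and a fixed remainder sequence ceases to be valid. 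This is exactly why one works with subresultant coefficients rather than naive Euclidean remainders: their polynomiality in $X_1,\ldots,X_n$ and their good behaviour under specialization (including under degree drops) are what make the sign diagram depend on $x$ through finitely many polynomial signs. Verifying this specialization compatibility is the technical heart of the proof.
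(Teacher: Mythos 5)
You should know at the outset that the paper does not prove this statement at all: Theorem \ref{th37} is quoted as a known result, with the proof explicitly deferred to the references \cite{mc} and \cite{br}. So the only meaningful comparison is with the standard proofs in those sources, and your outline is essentially that classical argument: reduce to a finite union of basic sets, view the defining polynomials as elements of $\mathbb{R}[X_1,\ldots,X_n][T]$, and show that the realizable sign data of the univariate family $P_1(x,\cdot),\ldots,P_s(x,\cdot)$ is governed by the signs of finitely many polynomials in $x$ alone (leading coefficients, derivatives, subresultant coefficients), so that $\pi(A)$ is a finite union of sign strata $S_\epsilon$. All the ingredients you name are the right ones, and you correctly identify where the weight of the proof sits.

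That said, as submitted this is a roadmap rather than a proof: the decisive lemma --- that one finite family $\mathcal{Q}\subseteq\mathbb{R}[X_1,\ldots,X_n]$ controls the entire sign diagram \emph{uniformly} in $x$, with the principal subresultant coefficients specializing correctly even as leading coefficients vanish and $T$-degrees drop --- is asserted and motivated but not established; your induction on $\max_j\deg_T P_j$ is plausible but the Sturm/Tarski-query bookkeeping on the stratum where degrees are stable, and the compatibility of truncation with the already-constructed $\mathcal{Q}$ on the degenerate locus, are precisely the technical content that \cite{mc} and \cite{br} spend their effort on. One simplification worth noting: Coste's notes avoid subresultants entirely by the H\"{o}rmander-style device of closing the family under $T$-derivatives and Euclidean remainders, branching on the vanishing of leading coefficients; this produces a larger but more elementary family $\mathcal{Q}$ and sidesteps the specialization issue you flag as the main obstacle. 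If you intend to complete your version, the subresultant route buys uniformity and smaller auxiliary data, but the remainder-based route would let you discharge the key step with only induction and the division algorithm.
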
 

\begin{corollary}\label{th38}
If $A$ is a semialgebraic subset of $\mathbb{R}^{n+k}$, then its image by the projection onto the space of the first $n$ coordinates is a semialgebraic subset of $\mathbb{R}^n$.
\end{corollary}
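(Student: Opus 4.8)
The plan is to prove the corollary by induction on $k$, using the Tarski--Seidenberg Second Form (Theorem \ref{th37}) as the engine that eliminates one coordinate at a time. The base case $k=1$ is \emph{exactly} the statement of Theorem \ref{th37}, so nothing needs to be checked there; the whole content of the corollary is that a projection forgetting several coordinates can be handled by forgetting them one by one.

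For the general case I would factor the projection $\pi:\mathbb{R}^{n+k}\to\mathbb{R}^n$ onto the first $n$ coordinates as a composition of $k$ single-coordinate projections. Writing $\pi_j:\mathbb{R}^{n+j}\to\mathbb{R}^{n+j-1}$ for the map that forgets the last coordinate, we have $\pi=\pi_1\circ\pi_2\circ\cdots\circ\pi_k$. The key observation is that each $\pi_j$ is itself an instance of the projection appearing in Theorem \ref{th37}, with the ambient dimension ``$n$'' of that theorem replaced by $n+j-1$; this is what makes the theorem applicable at every intermediate stage.

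I would then apply Theorem \ref{th37} repeatedly. Starting from the semialgebraic set $A\subseteq\mathbb{R}^{n+k}$, the image $\pi_k(A)$ is semialgebraic in $\mathbb{R}^{n+k-1}$; applying the theorem again, $\pi_{k-1}(\pi_k(A))$ is semialgebraic in $\mathbb{R}^{n+k-2}$, and so on, so that after exactly $k$ applications the set $\pi(A)=\pi_1\circ\cdots\circ\pi_k(A)$ is a semialgebraic subset of $\mathbb{R}^n$. Equivalently, one may phrase this as a formal induction: assuming the statement for $k-1$, write $\pi$ as the projection $\mathbb{R}^{n+k}\to\mathbb{R}^{n+k-1}$ followed by the projection $\mathbb{R}^{n+k-1}\to\mathbb{R}^n$, invoke Theorem \ref{th37} for the first factor and the inductive hypothesis for the second.

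Because the argument is merely a finite iteration of one already-established theorem, there is no genuine analytic obstacle here. The only point demanding a little care is the bookkeeping: one must ensure that the coordinate eliminated at each stage is always the last one, so that every factor $\pi_j$ matches the hypothesis of Theorem \ref{th37} verbatim, and that the conclusion ``semialgebraic'' is being read in the correct ambient space $\mathbb{R}^{n+j-1}$ at each step. With that indexing fixed, the induction closes immediately.
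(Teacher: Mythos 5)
Your proof is correct and is exactly the intended derivation: the paper states this corollary without its own proof (deferring to \cite{mc} and \cite{br}), and it follows from Theorem \ref{th37} by precisely the factorization into single-coordinate projections and the induction on $k$ that you describe. Nothing further is needed.
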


\begin{theorem}\label{th39}
A semialgebraic set has only finitely many connected components and all they are semialgebraic.
\end{theorem}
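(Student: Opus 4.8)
The plan is to derive the result from the \emph{Cylindrical Algebraic Decomposition} (CAD) theorem, which provides, for any finite family of real polynomials, a finite partition of $\mathbb{R}^n$ into semialgebraic ``cells'' on which the signs of those polynomials are constant and each of which is connected. First I would record the precise statement I intend to use: a CAD of $\mathbb{R}^n$ is a finite family of pairwise disjoint semialgebraic cells, each homeomorphic to an open box $(0,1)^{k}$ (with the convention that $(0,1)^0$ is a point), arranged cylindrically over a CAD of $\mathbb{R}^{n-1}$; and a CAD is \emph{adapted} to polynomials $P_1,\ldots,P_m\in\mathbb{R}[X_1,\ldots,X_n]$ if each $P_j$ has constant sign ($<0$, $=0$, or $>0$) on every cell. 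The existence of such an adapted CAD is the deep input, and I would prove the theorem assuming it, then sketch how the input itself is obtained.

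Granting the existence theorem, the argument is short. Let $S\subseteq\mathbb{R}^n$ be semialgebraic. By definition $S$ is described by a Boolean combination of conditions of the form $P_j=0$ and $P_j>0$ for finitely many polynomials $P_1,\ldots,P_m$. Choose a CAD adapted to $\{P_1,\ldots,P_m\}$. On each cell every $P_j$ has a fixed sign, so the truth value of the defining Boolean formula is constant on each cell; hence $S$ is exactly the union of those cells on which the formula holds. Since each cell is connected, no cell can meet two distinct connected components of $S$, so every connected component of $S$ is a union of whole cells. As the CAD has only finitely many cells, there are only finitely many such unions, and in particular $S$ has only finitely many connected components. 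Finally, each component is a finite union of cells, each cell being semialgebraic, and finite unions of semialgebraic sets are again semialgebraic; therefore every component is semialgebraic.

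The main obstacle is the existence of the adapted CAD, which I would establish by induction on the dimension $n$, the case $n=1$ being the elementary fact that the real roots of finitely many one-variable polynomials cut $\mathbb{R}$ into finitely many points and open intervals. The nontrivial step is the \emph{projection}: from the family $P_1,\ldots,P_m$ in $n$ variables one must produce a finite family of polynomials in the first $n-1$ variables so that, over each cell of an adapted CAD of $\mathbb{R}^{n-1}$, the number of real roots of each $P_j$ (viewed as a polynomial in $X_n$ with parameters in $\mathbb{R}^{n-1}$), their multiplicities, and their relative order all remain constant. The tool for this is the theory of subresultants, together with leading-coefficient and discriminant data, which controls exactly when and how the roots of a parametrized polynomial appear, merge, or cross. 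Once the base cells are so refined, the real roots vary continuously and semialgebraically, yielding ``section'' cells (the graphs of these root functions) and ``sector'' cells (the bands between consecutive graphs) that partition each cylinder; that every such cell is again homeomorphic to an open box, and hence connected, follows from the continuity of the defining sections and completes the induction.
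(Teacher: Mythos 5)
Your proof is correct and takes essentially the canonical route: the paper itself states Theorem \ref{th39} without proof, citing Coste and Benedetti--Risler, and your cylindrical algebraic decomposition argument (write $S$ as a union of connected semialgebraic cells of a CAD adapted to its defining polynomials, so every connected component of $S$ is a finite union of whole cells, hence there are finitely many components and each is semialgebraic) is exactly the standard proof given in those references. Your reduction of the theorem to the CAD existence theorem is complete and airtight, and your sketch of that existence theorem by induction on $n$ via subresultant and discriminant data likewise matches the cited treatment.
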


\begin{proposition}\label{th40} 
A connected semialgebraic set is path connected.
\end{proposition}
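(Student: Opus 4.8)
The plan is to reduce the claim to a standard point-set topology fact via \emph{local} path connectedness. Recall that a connected, locally path connected topological space is path connected: in a locally path connected space every path component is open (each point has a path connected neighborhood, which therefore lies entirely in that point's path component), so each path component is also closed, being the complement of the union of the remaining open path components; connectedness then forces there to be a single path component. Hence it suffices to show that an arbitrary semialgebraic set $A\subseteq\mathbb{R}^n$ is locally path connected, that is, that every $x\in A$ has arbitrarily small path connected neighborhoods in $A$.

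To establish local path connectedness I would appeal to the local conic structure of semialgebraic sets: for each $x\in A$ and each sufficiently small $\varepsilon>0$ there is a semialgebraic homeomorphism from $A\cap\overline{B}(x,\varepsilon)$ onto the cone with apex $x$ over the link $A\cap S(x,\varepsilon)$, fixing $x$. Since such a cone deformation retracts to its apex it is contractible, in particular path connected, and as $\varepsilon$ ranges over small positive values these neighborhoods form a neighborhood basis at $x$. Thus every point of $A$ admits a neighborhood basis of path connected sets, so $A$ is locally path connected, and the previous paragraph finishes the argument. An essentially equivalent route is to quote the semialgebraic triangulation theorem, giving a semialgebraic homeomorphism of $A$ onto the realization $|K|$ of a finite simplicial complex; since a homeomorphism preserves both connectedness and path connectedness, and a finite complex is path connected as soon as it is connected (by chaining simplices along shared faces), the result follows.

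The main obstacle is precisely the structural input used above, namely the local conic structure theorem, or equivalently the semialgebraic triangulation theorem. These are substantial theorems about semialgebraic sets, both ultimately resting on cylindrical algebraic decomposition, and in the spirit of the already-quoted results (Theorems \ref{th37}--\ref{th39}) I would cite them from \cite{mc} or \cite{br} rather than reprove them here. Note that the finiteness of connected components (Theorem \ref{th39}) is not strictly needed for this argument, although it could be used to replace local path connectedness by the weaker observation that the finitely many semialgebraic path components are open; once the decisive local structure is in hand, the remainder is routine.
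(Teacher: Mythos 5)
Your argument is mathematically sound, but note that the paper never proves Proposition \ref{th40} at all: it is listed in Section \ref{sec2.3} among the facts quoted from real semialgebraic geometry, with proofs explicitly deferred to \cite{mc} and \cite{br}. So your proposal cannot match the paper's (nonexistent) argument; instead it supplies a genuine derivation, and a correct one: the local conic structure theorem gives every point $x$ of a semialgebraic set $A$ a neighborhood basis of sets $A\cap\overline{B}(x,\varepsilon)$ that are semialgebraically homeomorphic to cones, hence path connected, so $A$ is locally path connected, and a connected, locally path connected space is path connected by the standard clopen-path-component argument. This is close in spirit to how the cited sources themselves proceed (cell decomposition and conic structure, both resting on cylindrical algebraic decomposition), and your observation that the finiteness statement (Theorem \ref{th39}) is not logically needed is accurate. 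One caution concerns your alternative route via triangulation: for a general semialgebraic set the triangulation theorem yields a semialgebraic homeomorphism of $A$ onto a \emph{union of open simplices} of a finite complex $K$, not onto the full realization $|K|$; since $|K|$ is compact while the sets relevant to this paper (for instance $\eta(\mathcal{O}_d)$, which is dense in $\mathbb{R}^{3d}$ by Corollary \ref{th18}) are unbounded, your phrasing is strictly too strong. The chaining argument can be repaired—if an open simplex of the union meets the closure of another, the half-open straight segment from a point of the latter's relative interior stays inside it, so the two lie in one path component; path components are then finite in number and closed, hence clopen—but if you invoke triangulation rather than the conic structure theorem, that repair should be stated rather than summarized as ``chaining simplices along shared faces.''
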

%%%%%%%%%%%%%%%%%%%%%%%%%%%%%%%%%%%%%%%%%%%%%%%%%%%%%%%%%%%%%%%%%%%%%%%%%%%%%%%%%%%%%%%%%%%%%%%%%%

\section{Spaces of Polynomial Knots}\label{sec3}

Let us denote the set of all polynomial maps by $\spm$ and the set of all polynomial knots by $\spk$. It is easy to see that $\spk$ is a proper subset of $\spm$. For an integer $d\geq 2$, we define the following sets: 
\begin{align*}
\ad&=\big\{\fgh\in\spm\mid\deg(f)\leq d-2,\ho\deg(g)\leq d-1\;\mbox{and}\;\deg(h)\leq d\big\},\\[3pt]
\bd&=\big\{\fgh\in\spm\mid\deg(f)<\deg(g)<\deg(h)\leq d\big\},\\[3pt]
\cd&=\big\{\fgh\in\spm\mid\deg(f)=d-2,\hf\deg(g)=d-1\;\mbox{and}\;\deg(h)=d\big\}\hp.
\end{align*}
Also, let $\od=\spk\cap\ad\hp,\;\pd=\spk\cap\bd$ and $\qd=\spk\cap\cd$. The set $\ad$ can be identified with the Euclidean space $\rtd$. In fact, there is a natural bijection $\eta:\ad\rightarrow\rtd$ given by 
\begin{equation}\label{eq14}
\fgh\mapsto\abc,
\end{equation} 
where $a_i,\,b_i$ and $c_i$, for $i=0,1,\ldots, d$, are coefficients of $t^i$ in the polynomials $f,\,g$ and $h$ respectively. We have a metric $\rho$ on $\ad$ given by
\begin{equation}\label{eq16}
\rho\hp(\phi,\psi)= \xi\hp\big(\eta(\phi),\eta(\psi)\big)
\end{equation}
for $\phi,\psi\in\ad$, where $\xi$ denotes the Euclidean distance in $\rtd$. With this metric, the set $\ad$ becomes a topological space. Obviously, the map $\eta$ is a homeomorphism (in fact, a diffeomorphism) between the topological spaces $\ad$ and $\rtd$. With respect to the subspace topology the sets $\bd,\hp \cd,\hp \od,\hp \pd$\, and\, $\qd$\, become subspaces of the space $\ad$ and hence they can be identified with the subspaces of $\rtd$. Note that the spaces $\od,\pd$ and $\qd$ are the spaces of polynomial knots of degree at most $d$.
\begin{remark}
Using a suitable invertible linear transformation, a polynomial knot of degree $d\geq2$ can be transformed to a polynomial knot $t\mapsto\fght$ such that $\deg(f)\leq d-2,\ho\deg(g)\leq d-1$ and $\deg(h)\leq d$. This shows that every knot-type can be represented by a polynomial knot belonging to the space $\od$ for some $d\geq2$.
\end{remark}

Note that $\spm=\mcup_{d\geq2}\ad$\: and\: $\spk=\mcup_{d\geq2}\od$. Therefore, the sets $\spm$ and $\spk$ can be given the inductive limit topology; that is, a set $U\subseteq\spm$ is open in $\spm$ if and only if the set $U\cap\ad$ is open in $\ad$ for all $d\geq2$, and a set $V\subseteq\spk$ is open in $\spk$ if and only if the set $V\cap\od$ is open in $\od$ for all $d\geq2$.\vskip0.1mm
It is easy to check that $\mathcal{B}_2=\mathcal{C}_2=\mathcal{P}_2=\mathcal{Q}_2$ and $\mathcal{C}_3=\mathcal{Q}_3$. Also, it is obvious that $\cd\nsubseteq\mathcal{C}_{d+1}$ and $\qd\nsubseteq\mathcal{Q}_{d+1}$ for all $d\geq2$. Furthermore, for the collection $\mathfrak{F}=\big\{\spm,\hp\spk,\hp\ad,\hp\bd,\hp\cd,\hp\od,\hp\pd,\hp\qd\mid d\geq2\big\}$, with respect to the strict set inclusion as partial order, we have the Hesse diagram as given in Figure \ref{fig1}.
\begin{figure}[H]
\begin{center}\begin{tikzpicture}[scale=0.65]
\node (aw)  at  (-5,-1.5)  {$\mathcal{A}_2$};
\node (ow)  at  (-5,-3)    {$\mathcal{O}_2$};
\node (at)  at  (-3,0)     {$\mathcal{A}_3$};
\node (ot)  at  (-3,-1.5)  {$\mathcal{O}_3$};
\node (qw)  at  (-3,-4.5)  {$\mathcal{B}_2=\mathcal{C}_2=\mathcal{P}_2=\mathcal{Q}_2$};
\node (ad)  at  (-1,1.5)   {$\mathcal{A}_m$};
\node (od)  at  (-1,0)     {$\mathcal{O}_m$};
\node (bt)  at  (-1,-1.5)  {$\mathcal{B}_3$};
\node (pt)  at  (-1,-3)    {$\mathcal{P}_3$};
\node (adp) at  (1,3)      {$\mathcal{A}_n$};
\node (odp) at  (1,1.5)    {$\mathcal{O}_n$};
\node (bd)  at  (1,0)      {$\mathcal{B}_m$};
\node (pd)  at  (1,-1.5)   {$\mathcal{P}_m$};
\node (qt)  at  (1,-4.5)   {$\mathcal{C}_3=\mathcal{Q}_3$};
\node (a)   at  (3,4.5)    {$\spm$};
\node (p)   at  (3,3)      {$\spk$};
\node (bdp) at  (3,1.5)    {$\mathcal{B}_n$};	
\node (pdp) at  (3,0)      {$\mathcal{P}_n$};
\node (cd)  at  (3,-1.5)   {$\mathcal{C}_m$};
\node (qd)  at  (3,-3)     {$\mathcal{Q}_m$};
\node (cdp) at  (5,0)      {$\mathcal{C}_n$};
\node (qdp) at  (5,-1.5)   {$\mathcal{Q}_n$};
	
\draw (ow)--(ot)--(od)--(odp)--(p)--(a)--(adp)--(ad)--(at)--(aw)--(ow)--(qw)--(pt)--(bt)--(bd)--(bdp)--(cdp)--(qdp)--(pdp)--(pd)--(qd)--(cd) (at)--(ot)--(pt)--(qt) (ad)--(od) (adp)--(odp) (bd)--(pd)--(pt) (bdp)--(pdp);
 
\draw[preaction={draw=white, -,line width=4pt}](at)--(bt) (od)--(pd) (ad)--(bd)--(cd) (odp)--(pdp) (adp)--(bdp);
\end{tikzpicture}\vo
for $n>m>3$
\caption{Partial order in $\mathfrak{F}$}
\label{fig1}
\end{center}
\end{figure}

\begin{proposition}\label{th1}
The space $\cd$, for $d\geq2$, is open in the space $\ad$.
\end{proposition}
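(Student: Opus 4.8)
The plan is to transport the problem to $\rtd$ through the homeomorphism $\eta:\ad\to\rtd$ and there display $\eta(\cd)$ as a finite intersection of manifestly open sets. First I would write a generic element of $\ad$ as a triple $(f,g,h)$ with
\[
f(t)=\sum_{i=0}^{d-2}a_i\hp t^i,\qquad g(t)=\sum_{i=0}^{d-1}b_i\hp t^i,\qquad h(t)=\sum_{i=0}^{d}c_i\hp t^i,
\]
so that under $\eta$ this element corresponds to the coefficient vector $(a_0,\dots,a_{d-2},b_0,\dots,b_{d-1},c_0,\dots,c_d)\in\rtd$. The key observation is that for such a triple the condition $\deg(f)=d-2$ is equivalent to $a_{d-2}\neq0$, the condition $\deg(g)=d-1$ is equivalent to $b_{d-1}\neq0$, and the condition $\deg(h)=d$ is equivalent to $c_d\neq0$, because $a_{d-2}$, $b_{d-1}$ and $c_d$ are precisely the top-degree coefficients permitted within $\ad$.

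Consequently $\eta(\cd)$ is exactly the set of points of $\rtd$ whose three coordinates $a_{d-2}$, $b_{d-1}$ and $c_d$ are all nonzero. Each of the corresponding coordinate projections $\rtd\to\ro$ is continuous, so each of the three sets $\{a_{d-2}\neq0\}$, $\{b_{d-1}\neq0\}$ and $\{c_d\neq0\}$ is the preimage of the open set $\ro\setminus\{0\}$ and is therefore open in $\rtd$; equivalently, each is the complement of a coordinate hyperplane. Their intersection $\eta(\cd)$ is then a finite intersection of open sets, hence open in $\rtd$. Since $\eta$ is a homeomorphism (indeed a diffeomorphism), $\cd=\eta^{-1}\big(\eta(\cd)\big)$ is open in $\ad$, which is the assertion.

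There is essentially no hard step here; the only point requiring care is the bookkeeping that confirms $a_{d-2}$, $b_{d-1}$ and $c_d$ really are the leading coefficients of $f$, $g$ and $h$ \emph{relative to the ambient space} $\ad$, so that non-vanishing of these particular coordinates is genuinely equivalent to the stated exact-degree conditions rather than to some weaker requirement. Once that identification is pinned down, openness is immediate. I would also remark, for use in the later finiteness-of-components arguments, that the very same description shows $\cd$ to be a semialgebraic subset of $\rtd$, being cut out by the polynomial inequalities $a_{d-2}^2>0$, $b_{d-1}^2>0$ and $c_d^2>0$.
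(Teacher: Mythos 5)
Your proof is correct and takes essentially the same approach as the paper: both transport the problem through the homeomorphism $\eta$ and observe that $\eta(\cd)$ is the open subset of $\rtd$ cut out by non-vanishing of the leading coefficients $a_{d-2}$, $b_{d-1}$ and $c_d$. The only (harmless) discrepancy is at $d=2$, where the paper imposes no condition on $a_0$, describing $\eta(\mathcal{C}_2)$ by $b_1c_2\neq0$ alone, while you also require $a_0\neq0$; under either convention the set is open, so the conclusion stands.
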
 

\begin{proof}
The set $\eta\hp(\mathcal{C}_2)$ is written as follows:
\begin{equation*}
\eta\hp(\mathcal{C}_2)=\big\{\hs(\hf a_0,b_0,b_1,c_0,c_1,c_2 \hf) \in\mathbb{R}^6\mid b_1 c_2\neq0 \hs \big\}.
\end{equation*} 
Also, for $d\geq3$, we have
\begin{equation*}
\eta\hp(\cd)=\big\{\hs\abc\in\rtd\mid a_{d-2}b_{d-1} c_d\neq0 \hs\big\}.
\end{equation*}
For $d\geq2$, the set $\eta\hp(\cd)$ is an open subset of $\eta\hp(\ad)=\rtd$. Since $\eta$ is a homeomorphism, the space $\cd$ is open in the space $\ad$. 
\end{proof}

\begin{remark}\label{rm7}
The sets $\mathcal{B}_2, \mathcal{P}_2$ and $\mathcal{Q}_2$ all are equal to $\mathcal{C}_2$, and hence they are open in the space $\mathcal{A}_2$.
\end{remark}

\begin{lemma}\label{th24}
For any nonzero real numbers $a$ and $b$, and an even integer $n\geq2$, we have the following:
\begin{enumerate}[(1)]
\item $0<\frac{1}{a^n+a^{n-1}b+\cdots+b^n}\leq\max\left\{\hp \frac{1}{a^n},\frac{1}{b^n}\hp\right\}$.\vw
\item$0<\frac{a^k+a^{k-1}b+\cdots+b^k}{a^n+a^{n-1}b+\cdots+b^n}\leq\max\left\{\frac{1}{a^{n-k}},\frac{1}{b^{n-k}}\right\}$,\hskip1.5mm for an even integer $k\in\{1,2,\ldots,n\}$.\vw
\item $\left|\frac{a^k+a^{k-1}b+\cdots+b^k}{a^n+a^{n-1}b+\cdots+b^n}\right| < \min\left\{\frac{1}{\left|a^{n-k}\right|},\frac{1}{\left|b^{n-k}\right|}\right\}$,\hskip1.5mm for an odd integer $k\in\{1,2,\ldots,n\}$.
\end{enumerate}
\end{lemma}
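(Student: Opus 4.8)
The plan is to reduce all three inequalities to elementary comparisons between monomials in $|a|$ and $|b|$. Throughout write $S_n=a^n+a^{n-1}b+\cdots+b^n$ and, for $0\le k\le n$, $T_k=a^k+a^{k-1}b+\cdots+b^k$, so that the quantities in the lemma are $1/S_n$, $T_k/S_n$ and $|T_k/S_n|$. Since each of $S_n$, $T_k$ and the bounding quantities $\max\{1/a^{n-k},1/b^{n-k}\}$ and $\min\{1/|a^{n-k}|,1/|b^{n-k}|\}$ is symmetric under interchanging $a$ and $b$, I may assume without loss of generality that $|a|\le|b|$.

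First I would establish the basic positivity fact: for every even integer $m\ge 0$ and nonzero reals $a,b$ one has $a^m+a^{m-1}b+\cdots+b^m>0$. When $a=b$ this sum equals $(m+1)a^m>0$, and when $a\neq b$ it equals $(a^{m+1}-b^{m+1})/(a-b)$; since $m+1$ is odd the map $t\mapsto t^{m+1}$ is strictly increasing, so numerator and denominator share the same sign and the quotient is positive. Applying this with $m=n$ gives $S_n>0$, which already yields the left-hand (positivity) assertions of (1) and (2), and applying it with $m=k$ for even $k$ gives $T_k>0$. I would also record the ``block'' identity $a^{n-k}T_k=a^n+a^{n-1}b+\cdots+a^{n-k}b^k$, i.e. $a^{n-k}T_k$ is exactly the sum of the first $k+1$ terms of $S_n$; this is the algebraic engine behind all the upper bounds.

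The core of the argument is then a short case analysis according to whether $a,b$ have the same or opposite signs; in each case I pass to $x=|a|$ and $y=|b|$ with $0<x\le y$. If $a,b$ have the same sign, every term of $S_n$ (and of $T_k$ for even $k$) is positive and $|T_k|=\sum_{i=0}^{k}x^{k-i}y^i$, so that, via the block identity, the bound in (2) becomes the statement that the first $k+1$ terms of $\sum_{i=0}^{n}x^{n-i}y^i$ do not exceed the whole sum, which is immediate; for the odd-$k$ bound in (3) the product $|T_k|\,y^{n-k}$ equals the block of the \emph{last} $k+1$ terms of that same sum, leaving $n-k\ge 1$ positive terms over, which forces the strict inequality. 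If $a,b$ have opposite signs, say $a=x$ and $b=-y$, then the alternating sums collapse to the closed forms $S_n=(x^{n+1}+y^{n+1})/(x+y)$ and $T_k=(x^{k+1}+(-1)^k y^{k+1})/(x+y)$, and each desired inequality, after clearing the common factor $x+y$, reduces to an elementary monomial comparison such as $x^{n-k}\le y^{n-k}$ (even $k$) or $-x^{k+1}y^{n-k}<x^{n+1}$ (odd $k$), both obvious from $0<x\le y$. Finally, (1) is the $k=0$ instance of the argument for (2) and so needs no separate treatment beyond the positivity already noted.

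I expect the only real friction to be bookkeeping rather than mathematics: keeping track of the signs $\mathrm{sgn}(a)^k$ that appear when $a,b$ are negative, and verifying that the inequality in (3) is genuinely strict (guaranteed by the leftover positive terms when the signs agree, and by the strictly negative left-hand side when they differ). The parity hypotheses enter in exactly two places—to guarantee $S_n>0$ and $T_k>0$ (even $n$, even $k$), and to control whether $n-k$ is even (so that $a^{n-k},b^{n-k}>0$ and $\min\{1/a^{n-k},1/b^{n-k}\}=1/x^{n-k}$, as in (2)) or odd (forcing the absolute values and the $\max$, as in (3))—so I would flag these parities explicitly at each step.
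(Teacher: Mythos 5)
Your proposal is correct, and its skeleton coincides with the paper's: fix an ordering of $|a|$ and $|b|$, and in each part compare a block of $k+1$ consecutive terms of $a^n+a^{n-1}b+\cdots+b^n$ (namely $a^{n-k}$ or $b^{n-k}$ times $a^k+a^{k-1}b+\cdots+b^k$) against the whole sum. Where you genuinely diverge is in how the requisite positivity and sign facts are certified. The paper never splits into sign cases: under its normalization $|a|\geq|b|$ it factors the complementary block as $(a+b)\bigl(a^{n-1}+a^{n-3}b^2+\cdots\bigr)$ and observes that both $a+b$ and any polynomial whose terms carry odd powers of $a$ and even powers of $b$ have the weak sign of $a$, so the product is nonnegative; moreover, in parts (2) and (3) it works with the reciprocal $x_k=(a^n+\cdots+b^n)/(a^k+\cdots+b^k)$, which forces a separate disposal of the case $a^k+\cdots+b^k=0$ in (3). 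You instead obtain positivity of the even sums from the closed form $(a^{m+1}-b^{m+1})/(a-b)$ and strict monotonicity of odd powers, which is shorter and more transparent than the paper's factorization, and then split into same-sign and opposite-sign cases: in the first, after passing to $x=|a|\leq y=|b|$, all terms are positive and both (2) and (3) become term-counting statements, with the strictness in (3) visible from the $n-k\geq1$ leftover terms; in the second, the alternating sums collapse to $(x^{n+1}+y^{n+1})/(x+y)$ and $\bigl(x^{k+1}\pm y^{k+1}\bigr)/(x+y)$, reducing everything to obvious monomial comparisons. Your route buys a cleaner positivity lemma, no special case in (3) (you bound $T_k/S_n$ rather than its reciprocal), and manifestly strict inequalities; the paper's route buys a single uniform computation with no case analysis. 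The one step you should make explicit is the reduction to $a>0>b$ in the mixed-sign case: all three claims are invariant under $(a,b)\mapsto(-a,-b)$, since $n$ is even, $k$ is even in (2), and (3) takes absolute values — exactly the sign bookkeeping you flagged, and it is easily supplied.
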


\begin{proof}
Without loss of generality we assume that $\lvert a\rvert\geq\lvert b\rvert$.\vskip0.1mm
(1) Since $\lvert a\rvert\geq\lvert b\rvert$, we have $a+b\geq0$ if $a>0$ and $a+b\leq0$ if $a<0$. Also, an odd power of $a$ and an even power of $b$ occur in each term of the expression $y=a^{n-1}+a^{n-3}b^2+\cdots+ab^{n-2}$, so\, $y\geq0$\, if\, $a>0$ and\,  $y\leq0$\, if\, $a<0$. Thus, in either case $(a+b)\left(a^{n-1}+a^{n-3}b^2+\cdots+ab^{n-2}\right)=(a+b)y\geq0$ and hence
\begin{equation*}
a^n+a^{n-1}b+\cdots+b^n=(a+b)\left(a^{n-1}+a^{n-3}b^2+\cdots+ab^{n-2}\right)+b^n\geq b^n>0\,.
\end{equation*} 
In other words, we have $0<\frac{1}{a^n+a^{n-1}b+\cdots+b^n}\leq\frac{1}{b^n}=\max\left\{\frac{1}{a^n},\frac{1}{b^n}\right\}$.\vskip0.7mm
(2) Let an even integer $k\in\{1,2,\ldots,n\}$ be given. By the first part, we have $a^k+a^{k-1}b+\cdots+b^k>0$. For $k=n$, the inequality is trivially true, so assume $k<n$. We now consider the following expression:
\begin{align}
x_k&=\frac{a^n+a^{n-1}b+\cdots+b^n}{a^k+a^{k-1}b+\cdots+b^k}\label{eq10}\\
&=\frac{a^n+a^{n-1}b+\cdots+a^{k+1}b^{n-k-1}}{a^k+a^{k-1}b+\cdots+b^k}+b^{n-k}\label{eq11}\\
&=\frac{(a+b)\left(a^{n-1}+a^{n-3}b^2+\cdots+a^{k+1}b^{n-k-2}\right)}{a^k+a^{k-1}b+\cdots+b^k}+b^{n-k}\label{eq12}.
\end{align}
An odd power of $a$ and an even power of $b$ occur in each term of the expression $y_k=a^{n-1}+a^{n-3}b^2+\cdots+a^{k+1}b^{n-k-2}$,\hskip1.2mm so\hskip1.2mm $y_k\geq0$\hskip1.2mm if\hskip1.2mm $a>0$ and\hskip1.2mm  $y_k\leq0$\hskip1.2mm if\hskip1.2mm $a<0$. Recall that $a+b\geq0$\hskip1.2mm if\hskip1.2mm $a>0$ and\hskip1.2mm  $a+b\leq0$\hskip1.2mm if\hskip1.2mm $a<0$. Hence in either case, the numerator of the first term of Expression (\ref{eq12}) is non-negative. Since $a^k+a^{k-1}b+\cdots+b^k>0$, the first term of Expression (\ref{eq12}) is non-negative. Thus, we have $x_k\geq b^{n-k}>0$\hskip1.5mm and hence $0<\frac{1}{x_k}\leq\frac{1}{b^{n-k}}=\max\left\{\frac{1}{a^{n-k}},\frac{1}{b^{n-k}}\right\}$.\vskip0.7mm
(3) Let $k\in\{1,2,\ldots,n\}$ be an odd integer. If $a^k+a^{k-1}b+\cdots+b^k=0$, then the inequality is trivially true, so assume $a^k+a^{k-1}b+\cdots+b^k\neq0$. Now consider the following expression:
\begin{align}
x_k&=\frac{a^n+a^{n-1}b+\cdots+b^n}{a^k+a^{k-1}b+\cdots+b^k}\label{eq7}\\
&=a^{n-k}+\frac{a^{n-k-1}b^{k+1}+a^{n-k-2}b^{k+2}+\cdots+b^n}{a^k+a^{k-1}b+\cdots+b^k}\label{eq8}\\
&=a^{n-k}+\frac{(a+b)\left(a^{n-k-2}b^{k+1}+a^{n-k-4}b^{k+3}+\cdots+ab^{n-2}\right)+b^n}{(a+b)\left(a^{k-1}+a^{k-3}b^2+\cdots+b^{k-1}\right)}\label{eq9}.
\end{align}
Note that an odd power of $a$ and an even power of $b$ occur in each term of the expression\, $a^{n-k-2}b^{k+1}+a^{n-k-4}b^{k+3}+\cdots+ab^{n-2}$,\, so by the similar argument as in the second part,\, $(a+b)\left(a^{n-k-2}b^{k+1}+a^{n-k-4}b^{k+3}+\cdots+ab^{n-2}\right)$ is non-negative, and hence the numerator of the second term of Expression (\ref{eq9}) is positive. Also, $a$ and $b$ occur with even powers in each term of the expression $a^{k-1}+a^{k-3}b^2+\cdots+b^{k-1}$,\hskip1.5mm so it is positive. Therefore, the sign of the second term of Expression (\ref{eq9}) is same as the sign of $a+b$ and hence it is same as the sign of $a$. This shows that $x_k> a^{n-k}>0$\, if $a>0$\, and\, $x_k < a^{n-k}<0$\, if\, $a<0$ and hence we have $\frac{1}{\left| x_k\right|}<\frac{1}{\left| a^{n-k}\right|}=\min\left\{\frac{1}{\left| a^{n-k}\right|},\frac{1}{\left| b^{n-k}\right|}\right\}$.
\end{proof}

\begin{lemma}\label{th25}
Let $n\geq2$ be a fixed even integer, and for $i\in\{0,1,\ldots, n\}$, let $\big\{\alpha_{ij}\big\}_{j=1}^\infty$ be a sequence of real numbers which converges to $\alpha_i$. Suppose $\alpha_n\neq0$, and let $\big\{s_j\big\}_{j=1}^\infty$ and $\big\{t_j\big\}_{j=1}^\infty$ be sequences of real numbers such that $s_j\to\pm\infty$\ho and $t_j\to\pm\infty$\ho as $j\to\infty$. Then 
\begin{equation*}
\lim_{j\to\infty}\left(\alpha_{0j}+\alpha_{1j}\left(s_j+t_j\right)+\cdots+\alpha_{nj}\left(s_j^n+s_j^{n-1}t_j+\cdots+t_j^n\right)\right)=\pm\infty
\end{equation*} 
depending on whether $\alpha_n$ is positive or negative.
\end{lemma}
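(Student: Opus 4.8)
The plan is to isolate the leading term $\alpha_{nj}\bigl(s_j^n+s_j^{n-1}t_j+\cdots+t_j^n\bigr)$ and to show that every lower-order term is negligible compared with it. Throughout, write $P_i(s,t)=s^i+s^{i-1}t+\cdots+t^i$ for $0\leq i\leq n$, so that the expression in question is $S_j=\sum_{i=0}^{n}\alpha_{ij}\,P_i(s_j,t_j)$. Since $s_j\to\pm\infty$ and $t_j\to\pm\infty$, for all sufficiently large $j$ both $s_j$ and $t_j$ are nonzero, so Lemma \ref{th24} applies with $a=s_j$ and $b=t_j$, regardless of their signs.

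First I would establish that $P_n(s_j,t_j)\to+\infty$. By Lemma \ref{th24}(1) we have $P_n(s_j,t_j)>0$ together with $\frac{1}{P_n(s_j,t_j)}\leq\max\{1/s_j^{\,n},\,1/t_j^{\,n}\}$. Because $n$ is even and $|s_j|,|t_j|\to\infty$, both $s_j^{\,n}\to+\infty$ and $t_j^{\,n}\to+\infty$, so $\max\{1/s_j^{\,n},\,1/t_j^{\,n}\}=1/\min\{s_j^{\,n},t_j^{\,n}\}$ and hence $P_n(s_j,t_j)\geq\min\{s_j^{\,n},t_j^{\,n}\}\to+\infty$.

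Next I would factor out this leading term and write
\begin{equation*}
S_j=P_n(s_j,t_j)\left(\alpha_{nj}+\sum_{i=0}^{n-1}\alpha_{ij}\,\frac{P_i(s_j,t_j)}{P_n(s_j,t_j)}\right).
\end{equation*}
The claim is that the bracketed factor converges to $\alpha_n$. Each sequence $\{\alpha_{ij}\}_j$ is convergent, hence bounded, so it suffices to show $P_i(s_j,t_j)/P_n(s_j,t_j)\to0$ for every $i$ with $0\leq i\leq n-1$. This is exactly the content of Lemma \ref{th24}(2) and (3): for even $i$ one has $0<P_i/P_n\leq\max\{1/s_j^{\,n-i},\,1/t_j^{\,n-i}\}$, and for odd $i$ one has $\lvert P_i/P_n\rvert<\min\{1/\lvert s_j^{\,n-i}\rvert,\,1/\lvert t_j^{\,n-i}\rvert\}$; in both cases $n-i\geq1$ while $|s_j|,|t_j|\to\infty$, forcing the bound to $0$. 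Consequently each summand $\alpha_{ij}\,P_i/P_n\to0$, and together with $\alpha_{nj}\to\alpha_n$ the bracketed factor converges to $\alpha_n\neq0$.

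Finally, combining the two limits, $S_j$ is the product of a factor $P_n(s_j,t_j)\to+\infty$ with a factor converging to $\alpha_n$; therefore $S_j\to+\infty$ when $\alpha_n>0$ and $S_j\to-\infty$ when $\alpha_n<0$, which is the assertion. The only point that genuinely requires care is the sign bookkeeping arising when $s_j$ and $t_j$ have opposite signs and when the exponents $n-i$ switch between even and odd; but this is precisely the difficulty that Lemma \ref{th24} was set up to absorb, so once that lemma is invoked the remainder of the argument is routine estimation.
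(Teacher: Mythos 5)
Your proof is correct and follows essentially the same route as the paper: the paper likewise divides the expression by $s_j^n+s_j^{n-1}t_j+\cdots+t_j^n$, uses Lemma \ref{th24} (parts (1)--(3)) to show the resulting quotient tends to $\alpha_n$, and then multiplies back by the leading sum, which Lemma \ref{th24}(1) shows is at least $\min\{s_j^n,t_j^n\}\to+\infty$. Your factoring-out formulation is just a cosmetic rephrasing of the paper's division argument, so there is nothing substantive to distinguish the two.
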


\begin{proof}
Since $s_j\to\pm\infty$ and $t_j\to\pm\infty$ as $j\to\infty$, we can choose $N\in\mathbb{N}$ such that $s_j\neq0$ and $t_j\neq0$ for all $j\geq N$. For $j\geq N$, let
\begin{equation}\label{eq13}
z_j=\frac{\alpha_{0j}+\alpha_{1j}\left(s_j+t_j\right)+\cdots+\alpha_{nj}\left(s_j^n+s_j^{n-1}t_j+\cdots+t_j^n\right)}{s_j^n+s_j^{n-1}t_j+\cdots+t_j^n}\hs.
\end{equation}
The expression for $z_j$ can be rewritten as follows:
\begin{align}\label{eq17}
z_j&=\alpha_{0j}\left(\dfrac{1}{s_j^n+s_j^{n-1}t_j+\cdots+t_j^n}\right)+\alpha_{1j}\left(\dfrac{s_j+t_j}{s_j^n+s_j^{n-1}t_j+\cdots+t_j^n}\right)\nonumber\\[3pt]
&\hskip4mm+\cdots+\alpha_{n-1,j}\left(\dfrac{s_j^{n-1}+s_j^{n-2}t_j+\cdots+t_j^{n-1}}{s_j^n+s_j^{n-1}t_j+\cdots+t_j^n}\right)+\alpha_{nj}\hp.
\end{align}
By Lemma \ref{th24}, for $i\in\{0,1,\ldots,n-1\}$ and $j\geq N$, we have
\begin{align}
&\hskip0.9mm 0\leq\frac{1}{s_j^n+s_j^{n-1}t_j+\cdots+t_j^n}\leq\max\left\{\frac{1}{s_j^n},\frac{1}{t_j^n}\right\}\quad\mbox{and}\label{eq25}\\
&\left|\frac{s_j^i+s_j^{i-1}t_j+\cdots+t_j^i}{s_j^n+s_j^{n-1}t_j+\cdots+t_j^n}\right|\leq\max\left\{\frac{1}{\rvert s_j^{n-i}\rvert},\frac{1}{\lvert t_j^{n-i}\rvert}\right\}\hf.\hp\label{eq26}
\end{align}
For $i\in\{0,1,\ldots,n-1\}$, note that $\frac{1}{s_j^{n-i}}\to0$ and $\frac{1}{t_j^{n-i}}\to0$ as $j\to\infty$, so by Eqs. (\ref{eq25}) and (\ref{eq26}), we have	
\begin{equation*}	
\lim_{j\to\infty}\frac{1}{s_j^n+s_j^{n-1}t_j+\cdots+t_j^n}=0 \quad\mbox{and}\quad\lim_{j\to\infty}\frac{s_j^i+s_j^{i-1}t_j+\cdots+t_j^i}{s_j^n+s_j^{n-1}t_j+\cdots+t_j^n}=0\hs.
\end{equation*}
For $i\in\{0,1,\ldots,n\}$, note that $\alpha_{ij}\to\alpha_i$ as $j\to\infty$, so by Eq. (\ref{eq17}), we have $z_j\to\alpha_n$ as $j\to\infty$ (note that $\alpha_n\neq0$). By Lemma \ref{th24}, we have $s_j^n+s_j^{n-1}t_j+\cdots+t_j^n\geq\min\hp\left\{s_j^n,t_j^n\right\}$ for all $j\geq N$, so $s_j^n+s_j^{n-1}t_j+\cdots+t_j^n\to\infty$ as $j\to\infty$\hf. Hence by Eq. (\ref{eq13}), we get the required result.
\end{proof}

\begin{lemma}\label{th7}
Let $\gamma(t)=\alpha_0+\alpha_1 t+\cdots+\alpha_k t^k$ be a polynomial in a variable $t$ and let $\Gamma(s, t)=\alpha_1+\alpha_2\left(s+t\right) +\cdots+\alpha_k\left(s^{k-1}+s^{k-2}t+\cdots+t^{k-1}\right)$ be a polynomial in two variables $s$ and $t$. Then a point $(s_0, t_0)\in\rw$ is a zero of\hw$\Gamma$\ho if and only if either $\gamma(s_0)=\gamma(t_0)$ or $\gamma'(t_0)=0$ accordingly as $s_0\neq t_0$ or $s_0=t_0$.
\end{lemma} 

\begin{proof}
It is easy to check the following:
\begin{align}
\Gamma(s, t)&=\dfrac{\gamma(s)-\gamma(t)}{s-t}\quad \mbox{for all}\; s,t\in\ro\; \mbox{with}\; s\neq t,\; \mbox{and}\label{eq18}\\
\Gamma(t, t)&=\gamma'(t)\quad \mbox{for all}\; t\in\ro.\label{eq19}
\end{align}\vskip0.1mm
Suppose a point $(s_0, t_0)\in\rw$ is a zero of $\Gamma$. If $s_0\neq t_0$, then by Eq. (\ref{eq18}), $\gamma(s_0)=\gamma(t_0)$. If $s_0=t_0$, then by Eq. (\ref{eq19}), $\gamma'(t_0)=0$.\vskip0.1mm
If $\gamma(s_0)=\gamma(t_0)$ for some $(s_0, t_0)\in\rw$ with $s_0\neq t_0$, then by Eq. (\ref{eq18}), the point $(s, t)=(s_0, t_0)$ is a zero of $\Gamma$. If $\gamma'(t_0)=0$ for some $t_0\in\ro$, then by Eq. (\ref{eq19}), the point $(s, t)=(t_0, t_0)$ is a zero of $\Gamma$.
\end{proof}

\begin{lemma}\label{th8}
Let $f(t)= \at,\ho g(t)=\bt$ and $h(t)=\ct$ be real polynomials. Then the polynomial map $\phi:\ro\to\rt$ given by $t\mapsto\fght$ is a smooth embedding if and only if the polynomials
\begin{align*}
F(s, t)&=\past,\\
G(s, t)&=\pbst\hw\mbox{and}\\
H(s, t)&=\pcst
\end{align*}
do not have a common zero.
\end{lemma} 

\begin{proof}
Suppose that $(s_0, t_0)\in\rw$ is a common zero of the polynomials $F$, $G$ and $H$. If $s_0\neq t_0$, then by Lemma \ref{th7}, $f(s_0)=f(t_0)$, $g(s_0)=g(t_0)$ and $h(s_0)=h(t_0)$, and hence $\phi(s_0)=\phi(t_0)$. If $s_0=t_0$, then again by Lemma \ref{th7}, $f'(t_0)=g'(t_0)=h'(t_0)=0$\hp; that is, $\phi'(t_0)=0$. Thus, in either case, $\phi$ is not an embedding.\vskip0.1mm
To prove the converse, assume that $\phi$ is not an embedding. Then we have $\phi(s_0)=\phi(t_0)$ for some $s_0\neq t_0$, or $\phi'(u_0)=0$\, for some $u_0\in\ro$. In other words, $f(s_0)=f(t_0)$, $g(s_0)=g(t_0)$ and $h(s_0)=h(t_0)$ for some $s_0\neq t_0$, or $f'(u_0)=g'(u_0)=h'(u_0)=0$ for some $u_0\in\ro$.  Therefore, by Lemma \ref{th7}, the polynomials $F$, $G$ and $H$ have a common zero.
\end{proof}
 
\begin{theorem}\label{th10}
The space $\qd$, for $d\geq2$, is open in the space $\cd$.
\end{theorem}

\begin{proof}
Since $\mathcal{Q}_2=\mathcal{C}_2$ and $\mathcal{Q}_3=\mathcal{C}_3$, the theorem is trivially true for $d=2,3$. So we assume that $d\geq4$. We show that $\cd\setminus\qd$ is closed in $\cd$. Let $\left\{\phi_j\right\}_{j=1}^{\infty}$ be a sequence of points in $\cd\setminus\qd$ which converges to a point $\phi$ in $\cd$. We need to show that $\phi\in\cd\setminus\qd$. For $j\in\mathbb{N}$, let $\phi_j$ be given by $t\mapsto\big(f_j(t),\hp g_j(t),\hp h_j(t)\big)$, and its component polynomials be given by $f_j(t)=a_{0j}+a_{1j} t+\cdots+a_{d-2,j}\hp t^{d-2},\, g_j(t)=b_{0j}+b_{1j} t+\cdots+b_{d-1,j}\hp t^{d-1}$ and $h_j(t)=c_{0j}+c_{1j} t+\cdots +c_{dj}\hp t^d$. Also, let $\phi$ be given by $t\mapsto\fght$, and the component polynomials $f,\hp g$ and  $h$ be given by $f(t)=a_0+a_1 t+\cdots+a_{d-2}\hp t^{d-2},\; g(t)=b_0+b_1 t+\cdots+b_{d-1}\hp t^{d-1}$ and $h(t)=c_0+c_1 t+\cdots +c_d\hp t^d$. For $j\in\mathbb{N}$, since $\phi_j$ is not a smooth embedding, by Lemma \ref{th8}, we can choose $(s_j,\hp t_j)\in\rw $ such that
\begin{align}
& a_{1j}+a_{2j}\left(s_j+t_j\right)+\cdots+a_{d-2,j}\left(s_j^{d-3}+s_j^{d-4}t_j+\cdots+t_j^{d-3}\right)=0,\label{eq1}\\
& b_{1j}+b_{2j}\left(s_j+t_j\right)+\cdots+b_{d-1,j}\left(s_j^{d-2}+s_j^{d-3}t_j+\cdots+t_j^{d-2}\right)=0\quad\mbox{and}\label{eq2}\\
& c_{1j}+c_{2j}\left(s_j+t_j\right)+\cdots+c_{dj}\left(s_j^{d-1}+s_j^{d-2}t_j+\cdots+t_j^{d-1}\right)=0\hp.\label{eq3}
\end{align}\vskip0.1mm 
We claim that the sequence $\big\{(s_j,\hp t_j)\big\}_{j=1}^\infty$ is bounded. Suppose on the contrary that this sequence is unbounded, then at least one of the sequence $\big\{s_j\big\}_{j=1}^\infty$ or $\big\{t_j\big\}_{j=1}^\infty$ is unbounded. We may assume that the sequence $\big\{s_j\big\}_{j=1}^\infty$ is unbounded, so it has a subsequence which diverges to $\pm\infty$. Again, without loss of generality, we may assume that the sequence $\big\{s_j\big\}_{j=1}^\infty$ itself diverges to $\pm\infty $. There are two cases according to which either $\big\{t_j\big\}_{j=1}^\infty$ is bounded or not.\vskip0.1mm
(1) If the sequence $\big\{t_j\big\}_{j=1}^\infty$ is bounded: For $i=0,1,\ldots, d$, note that $c_{ij}\to c_i$ as $j\to\infty$ and $c_d\neq0$. Since the sequence $\big\{s_j\big\}_{j=1}^\infty$ diverges to $\pm\infty $, we have 
\begin{equation*}
\lim_{j\to\infty}\left(c_{1j}+c_{2j}\left(s_j+t_j\right)+\cdots+c_{dj}\left(s_j^{d-1}+s_j^{d-2}t_j+\cdots+t_j^{d-1}\right)\right)=\pm\infty\,.
\end{equation*}
This is a contradiction, since the right hand side of Eq. (\ref{eq3}) is zero for all $j\in\mathbb{N}$.\vskip0.1mm
(2) If the sequence $\big\{t_j\big\}_{j=1}^\infty$ is unbounded: In this case, the sequence $\big\{t_j\big\}_{j=1}^\infty$ has a subsequence which diverges to $\pm\infty$. Without loss of generality, we may assume that the sequence $\big\{t_j\big\}_{j=1}^\infty$ itself diverges to $\pm\infty $. For $i=0,1,\ldots, d$, note that $b_{ij}\to b_i$ and $c_{ij}\to c_i$ as $j\to\infty$, and $b_{d-1}\neq0$ and $c_d\neq0$. Also, note that the sequence $\big\{s_j\big\}_{j=1}^\infty$\hf diverges to $\pm\infty$, so by Lemma \ref{th25}, we have
\begin{align*} 
&\lim_{j\to\infty}\left(b_{1j}+b_{2j}\left(s_j+t_j\right)+\cdots+b_{j,d-1}\left(s_j^{d-2}+s_j^{d-3}t_j+\cdots+t_j^{d-2}\right)\right)=\pm\infty\quad\mbox{or}\\ 
&\lim_{j\to\infty}\left(c_{1j}+c_{2j}\left(s_j+t_j\right)+\cdots+c_{dj}\left(s_j^{d-1}+s_j^{d-2}t_j+\cdots+t_j^{d-1}\right)\right)=\pm\infty\,.
\end{align*}
This is again a contradiction, since the right hand sides of Eqs. (\ref{eq2}) and (\ref{eq3}) are zero for all $j\in\mathbb{N}$.\vskip0.1mm
The cases (1) and (2) both together show that the sequence  $\big\{(s_j,\hp t_j)\big\}_{j=1}^\infty$ is bounded and hence it has a subsequence which is convergent. We may assume that the sequence $\big\{(s_j,\hp t_j)\big\}_{j=1}^\infty$ itself is convergent, say converges to a point $ (s_0, t_0)\in\rw$. Since the sequence
\begin{equation*}
\big\{(a_{0j}, a_{1j}, \ldots, a_{d-2,j}, b_{0j}, b_{1j}, \ldots, b_{d-1,j}, c_{0j}, c_{1j}, \ldots, c_{dj})\big\}_{j=1}^\infty
\end{equation*}
converges to $(a_0, a_1, \ldots, a_{d-2}, b_0, b_1, \ldots, b_{d-1}, c_0, c_1, \ldots, c_d)$, the left hand sides of Eqs. (\ref{eq1}), (\ref{eq2}) and (\ref{eq3}) converge respectively to
\begin{align}
& a_1+a_2\left(s_0+t_0\right)+\cdots+a_{d-2}\left(s_0^{d-3}+s_0^{d-4}t_0+\cdots+t_0^{d-3}\right),\label{eq4}\\
& b_1+b_2\left(s_0+t_0\right)+\cdots+b_{d-1}\left(s_0^{d-2}+s_0^{d-3}t_0+\cdots+t_0^{d-2}\right)\quad\mbox{and}\label{eq5}\\
& c_1+c_2\left(s_0+t_0\right)+\cdots+c_d\left(s_0^{d-1}+s_0^{d-2}t_0+\cdots+t_0^{d-1}\right)\hp.\label{eq6}
\end{align}
The right hand sides of Eqs. (\ref{eq1}), (\ref{eq2}) and (\ref{eq3}) are zero, and hence so are Expressions (\ref{eq4}), (\ref{eq5}) and (\ref{eq6}). Therefore, by Lemma \ref{th8}, the map $\phi$ is not an embedding. In other words, the map $\phi$ is an element of the space $\cd\setminus\qd$.
\end{proof}

The following corollary follows trivially from Proposition \ref{th1} and Theorem \ref{th10}.
 
\begin{corollary}\label{th11} 
The space $\qd$, for $d\geq2$, is open in the space $\ad$.
\end{corollary}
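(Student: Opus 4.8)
The plan is to deduce the statement purely from point-set topology, exploiting the transitivity of the ``open subset'' relation together with the two results already established. The guiding principle is the elementary fact that if $A$ is an open subspace of a topological space $X$ and $B$ is open in $A$ with respect to the subspace topology, then $B$ is open in $X$: indeed, by definition of the subspace topology $B=U\cap A$ for some $U$ open in $X$, and the intersection of the two open sets $U$ and $A$ is again open in $X$.

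First I would invoke Theorem \ref{th10}, which asserts that $\qd$ is open in the space $\cd$. By the definition of the subspace topology on $\cd\subseteq\ad$, this furnishes a set $U$, open in $\ad$, with $\qd=U\cap\cd$. Next I would invoke Proposition \ref{th1}, which guarantees that $\cd$ is itself open in $\ad$. Combining the two, $\qd=U\cap\cd$ is an intersection of two subsets of $\ad$ that are both open in $\ad$, and is therefore open in $\ad$, as required.

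Since $\ad$ is a genuine topological space (a metric space, identified via $\eta$ with $\rtd$), no subtlety about the ambient topology arises, and the two supplied results slot together immediately. Consequently there is no real obstacle to overcome: the mathematical content of the corollary resides entirely in Theorem \ref{th10} and Proposition \ref{th1}, while the present step is merely the routine observation that openness composes across nested subspaces. This is precisely why the corollary is said to follow trivially.
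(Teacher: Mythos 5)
Your proposal is correct and is exactly the argument the paper intends: the paper states that the corollary follows trivially from Theorem \ref{th10} ($\qd$ open in $\cd$) and Proposition \ref{th1} ($\cd$ open in $\ad$), and your write-up simply makes explicit the routine transitivity-of-openness step that the paper leaves implicit.
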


\begin{remark}
For $d\geq2$, we have an element $\phi\in\od$ given by $t\mapsto(\hp 1,\, 1,\, t\hp)$. For $\epsilon>0$, an open ball $V(\phi,\epsilon)=\big\{\tau\in\ad\mid\rho(\phi,\tau)<\epsilon\big\}$ (where $\rho$ is the metric as defined in Eq. (\ref{eq16})) contains an element $\phi_\epsilon\in\ad$ given by 
\begin{equation*}
t\mapsto\big(\hp 1,\, 1,\, t-(\epsilon/2) t^2\hp\big)
\end{equation*} 
which is not an embedding (since $\phi_\epsilon'(1/ \epsilon)=0$). Thus $\phi$ is not an interior point of $\od$. This shows that {\it the space $\od$, for $d\geq2$, is not open in the space $\ad$}.
\end{remark}

\begin{remark}
For $d\geq3$, let a map $\psi\in\bd$ be given by $t\mapsto(\hp t^{d-3},\, t^{d-2},\, t^{d-1}\hp)$. For $\epsilon>0$, an open ball $V(\psi,\epsilon)$ contains an element $\psi_\epsilon\in\ad$ given by
\begin{equation*}
t\mapsto\big(\hp t^{d-3},\, t^{d-2}+(\epsilon/2)t^{d-1},\, t^{d-1}\hp\big)
\end{equation*}
which is not an element of $\bd$. Therefore $\psi$ is not an interior point of the space $\bd$, and hence {\it the space $\bd$, for $d\geq3$, is not open in the space $\ad$}.
\end{remark} 

\begin{remark}
For $d\geq3$, we have an element $\sigma\in\pd$ given by $t\mapsto(\hp1,\,t,\,t^2\hp)$. For $\epsilon>0$, an open ball $V(\sigma,\epsilon)$ contains an element $\sigma_\epsilon\in\bd$ given by
\begin{equation*}
t\mapsto\big(\hp1,\,t-(\epsilon/2)t^{d-1},\,t^2-(\epsilon/2)t^d\hp\big).
\end{equation*} 
Note that $\sigma_\epsilon\big((2/\epsilon)^{\frac{1}{d-2}}\big)=\sigma_\epsilon(0)$, so $\sigma_\epsilon$ is not an embedding. Hence $\sigma$ is not an interior point of $\pd$ (with respect to both the spaces $\ad$ and $\bd$). This shows that {\it the space $\pd$, for $d\geq3$, is not open in both the spaces $\ad$ and $\bd$}.
\end{remark}

\begin{theorem}\label{th12}
The space $\qd$, for $d\geq2$, is dense in the space $\cd$.
\end{theorem}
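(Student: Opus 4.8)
The plan is to show directly that every $\phi\in\cd$ is a limit of embeddings lying in $\qd$ (equivalently, that the set $\cd\setminus\qd$, closed by Theorem \ref{th10}, has empty interior). Since $\mathcal{C}_2=\mathcal{Q}_2$ and $\mathcal{C}_3=\mathcal{Q}_3$, the statement is trivial for $d=2,3$, so assume $d\geq4$. Fix $\phi=\fgh\in\cd$ with $f(t)=\at$, $g(t)=\bt$, $h(t)=\ct$ (so $a_{d-2},b_{d-1},c_d\neq0$), and fix $\epsilon>0$. I would perturb the coefficients of $\phi$ by arbitrarily small amounts, always keeping $a_{d-2},b_{d-1},c_d$ nonzero; this is an open condition, so the degree sequence is preserved and the perturbed map stays in $\cd$. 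By Lemma \ref{th8} it suffices to arrange that the divided-difference polynomials $F,G,H$ of the perturbed map have no common zero in $\rw$.

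First I would perturb only $g$, i.e. the coefficients $b_1,\dots,b_{d-1}$, so as to make $G$ and $H$ coprime in $\ro[s,t]$. This is the crux of the argument: for the \emph{given} $g,h$ the polynomials $G,H$ may share a common factor, in which case $G=H=0$ (hence possibly $\phi(s)=\phi(t)$) holds along an entire curve, and no perturbation of $f$ alone could then remove those double points. Let $S$ denote the linear span of the forms $1,\,(s+t),\dots,(s^{d-2}+\cdots+t^{d-2})$, so that $G$ ranges over $S$ as $b_1,\dots,b_{d-1}$ vary. For each irreducible factor $\chi$ of $H$ (note $H$ is nonconstant, as its total degree is $d-1\geq1$), the set $\{\,G\in S:\chi\mid G\,\}$ is a linear subspace of $S$ which is \emph{proper}, since $S$ contains the nonzero constants and $\chi\nmid1$ as $\deg\chi\geq1$. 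As $H$ has only finitely many irreducible factors, the set of $G\in S$ sharing a factor with $H$ is a finite union of proper subspaces, hence a closed nowhere-dense subset of $S$; intersecting with the open condition $b_{d-1}\neq0$ still leaves points arbitrarily close to the original $G$. Thus an arbitrarily small change of $b_1,\dots,b_{d-1}$ makes $G$ and $H$ coprime. Fixing this new $g$, two plane curves with no common component meet in finitely many points, so $G,H$ now have only finitely many common zeros $(s_1,t_1),\dots,(s_N,t_N)$ in $\rw$.

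Next I would perturb only $f$, i.e. the coefficients $a_1,\dots,a_{d-2}$; this leaves $G,H$ and their finite common zero set untouched. For each fixed point $(s_k,t_k)$ the value $F(s_k,t_k)=a_1+a_2(s_k+t_k)+\cdots+a_{d-2}(s_k^{d-3}+\cdots+t_k^{d-3})$ is a linear functional of $(a_1,\dots,a_{d-2})$ whose coefficient of $a_1$ is $1$, hence is not identically zero; so $\{F(s_k,t_k)=0\}$ is a proper hyperplane in the $a$-coefficient space. Avoiding the finite union of these $N$ hyperplanes is achievable by an arbitrarily small perturbation of $a_1,\dots,a_{d-2}$ (keeping $a_{d-2}\neq0$), giving $F(s_k,t_k)\neq0$ for every $k$. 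Now suppose $(s,t)$ were a common zero of $F,G,H$; then it is a common zero of $G,H$, so $(s,t)=(s_k,t_k)$ for some $k$, whence $F(s_k,t_k)\neq0$, a contradiction. Hence $F,G,H$ have no common zero in $\rw$.

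By Lemma \ref{th8} the resulting map $\psi$ is a polynomial embedding; since all perturbations were small and kept $a_{d-2},b_{d-1},c_d$ nonzero, we have $\psi\in\cd$, hence $\psi\in\qd$, with $\rho(\phi,\psi)<\epsilon$. Therefore $\qd$ is dense in $\cd$. The one genuinely delicate point is the coprimality step: without first reducing the $(g,h)$-coincidence locus to a finite set, perturbing $f$ could never separate double points spread along a curve, so making $G$ and $H$ coprime is where the real content lies, while the subsequent separation of finitely many points by perturbing $f$ is routine.
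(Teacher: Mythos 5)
Your proof is correct, and it shares the paper's overall skeleton: via Lemma \ref{th8}, reduce the problem to making $F$, $G$, $H$ have no common zero; establish that $G$ and $H$ are coprime; invoke B\'{e}zout to get finitely many common zeros; and finally perturb $f$ to miss them. But the two crucial steps are carried out by genuinely different mechanisms. You achieve coprimality by perturbing $g$, using a genericity argument (inside the span $S$ of the forms $1,\, s+t,\, \ldots,\, s^{d-2}+\cdots+t^{d-2}$, avoid the finite union of proper subspaces $\{\,G\in S : \chi\mid G\,\}$ over the irreducible factors $\chi$ of $H$). The paper instead proves that for every element of $\cd$ coprimality is automatic, with no perturbation of $g$ at all: the leading forms of $G$ and $H$ are nonzero multiples of $s^{d-2}+s^{d-3}t+\cdots+t^{d-2}$ and $s^{d-1}+s^{d-2}t+\cdots+t^{d-1}$, and a common factor of these would produce a common root of $1+t+\cdots+t^{d-2}$ and $1+t+\cdots+t^{d-1}$, i.e.\ a complex number that is simultaneously a $(d-1)$\textsuperscript{th} and a $d$\textsuperscript{th} root of unity other than $1$, which is impossible. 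So the step you call ``where the real content lies'' is, in the paper, free --- a consequence of the consecutive degrees $d-1$ and $d$ built into $\cd$. Conversely, the paper's first perturbation goes to an issue you never need to isolate: it adds a small linear term to $h$ so that the planar curve $(g,\hat{h})$ is an immersion, which rules out diagonal common zeros of $G$ and $H$ (critical points) and leaves only honest double points for the perturbation of $f$ to separate; in your argument diagonal and off-diagonal zeros are treated on the same footing, since the linear functional $(a_1,\ldots,a_{d-2})\mapsto F(s_k,t_k)$ has coefficient $1$ on $a_1$ whether or not $s_k=t_k$. As for what each approach buys: your subspace-avoidance argument is more robust and would survive for degree sequences where the roots-of-unity observation fails, while the paper's proof is sharper for this particular space, getting coprimality for free and needing only explicit linear-term perturbations of $h$ and of $f$.
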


\begin{proof}
Note that $\mathcal{Q}_2=\mathcal{C}_2$ and $\mathcal{Q}_3=\mathcal{C}_3$, so the theorem is trivially true for $d=2,3$. Assume that $d\geq4$. In order to prove that $\qd$ is dense in the space $\cd$, we have to show that for any $\phi\in\cd$ and any $\epsilon>0$, there is a polynomial knot $\psi\in\qd$ which belongs to the $\epsilon$-neighborhood $V(\phi,\epsilon)$ of $\phi$. Let an arbitrary $\phi\in\cd$ and an arbitrary $\epsilon>0$ be given. Let $\phi$ be given by $t\mapsto\fght$ and let
\begin{equation*}
m_1=\min\left\{\hs\lvert h'(t)\rvert : t\in\ro,\, g'(t)=0\hw\mbox{and}\hw h'(t)\neq0\hs\right\}.
\end{equation*} 
Let us choose a positive real number $r<\min\left\{m_1,\epsilon/2\right\}$, and let $\hat{h}:\ro\to\ro$ be defined by $t\mapsto h(t)+\frac{r}{2}t$. Suppose $g$ and $\hat{h}$ be given by $g(t)=\bt$ and $\hat{h}(t)=\ct$. Then for the curve $\alpha:\ro\to\rw$ given by $t\mapsto\big(g(t),\hp\hat{h}(t)\big)$, we have $\alpha'(t)\neq0$ for all $t\in\ro$, so by Lemma \ref{th7}, the self-intersections of $\alpha$ are exactly the common zeros of the following polynomials:
\begin{align*}
G(s, t)&=\pbst\quad\mbox{and}\\
H(s, t)&=\pcst.
\end{align*}
\hskip5.2mm We claim that the polynomials $G$ and $H$ do not have a non-constant common factor. Suppose on the contrary that they have a non-constant common factor, say $\Psi$, and let $\zeta$ be its leading term. Then $\zeta$ is a non-constant common factor of the polynomials $\mu(s,t)=s^{d-2}+s^{d-3}t+\cdots+t^{d-2}$ and $\nu(s,t)=s^{d-1}+s^{d-2}t+\cdots+t^{d-1}$. Let  $\mu_1$, $\nu_1$ and $\zeta_1$ be polynomials given by $\mu_1(t)=\mu(1,t)=1+t+\cdots+t^{d-2}$, $\nu_1(t)=\nu(1,t)=1+t+\cdots+t^{d-1}$ and $\zeta_1(t)=\zeta(1,t)$. It is obvious that the polynomials $\zeta$ and $\zeta_1$ have same degree, and thus $\zeta_1$ is a non-constant polynomial. Note that $t^{d-1}-1=\mu_1(t)(t-1)$ and $t^d-1=\nu_1(t)(t-1)$. Since $\zeta_1$ is a common factor of the polynomials $\mu_1$ and $\nu_1$, every root of $\zeta_1$ is a complex $(d-1)$\textsuperscript{th} root as well as $d$\hp\textsuperscript{th} root of unity other than $1$. This is a contradiction, since any complex $(d-1)$\textsuperscript{th} root of unity other than $1$ cannot be equal to a complex $d$\hp\textsuperscript{th} root of unity. This proves the claim.\vskip0.1mm 
By B\'{e}zout's theorem, the polynomials $G$ and $H$ have only finitely many common zeros, and hence the curve $\alpha$ has only finitely many self-intersections, say  $(s_1,t_1),(s_2,t_2),\ldots,(s_k,t_k)$ be those intersections (that is, $s_i\neq t_i$ and $\alpha(s_i)=\alpha(t_i)$ for $i=1,2,\ldots,k$). Let us choose a positive real number $u<\min\left\{m_2,\epsilon/2\right\}$, where
\begin{equation*}
m_2=\min\hf\left\{\hs\left|\frac{f(s_i)-f(t_i)}{s_i-t_i}\right|\,:\, i=1,2,\ldots,k\hw\mbox{and}\hw\frac{f(s_i)-f(t_i)}{s_i-t_i}\neq0\hs\right\}.
\end{equation*} 
Let $\hat{f}:\ro\to\ro$ be given by $t\mapsto f(t)+\frac{u}{2}t$. For $i=1,2,\ldots,k$, it is easy to note that $\hat{f}(s_i)\neq\hat{f}(t_i)$. This shows that the map $\psi:\ro\to\rt$ given by $t\mapsto\big(\hat{f}(t),\hp g(t),\hp\hat{h}(t)\big)$ has no self-intersections. Note that $\psi'(t)\neq0$ for all $t\in\ro$, since $\alpha'(t)\neq0$ for all $t\in\ro$. Also, it is easy to see that $\psi$ has a degree sequence $(d-2,\hp d-1,\hp d)$, so it is an element of the space $\qd$. We now estimate $\rho\hp(\phi,\psi)$ as follows:
\begin{align*}
\rho\hp\big(\phi,\psi\big)&\leq\rho\hp\big(\phi,(\hp f,\hp g,\hp\hat{h}\hp)\big)+\rho\hp\big((\hp f,\hp g,\hp\hat{h}\hp),\psi\big)\\
&\leq r/2+u/2\\
&\leq\epsilon/4+\epsilon/4\\
&\leq\epsilon/2\,.
\end{align*}
Thus, we have $\psi\in V(\phi,\epsilon)$. This proves the theorem.
\end{proof}

\begin{corollary}\label{th14}
The space $\qd$, for $d\geq2$, is dense in the space $\ad$.
\end{corollary}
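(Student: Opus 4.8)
The plan is to deduce this corollary from Theorem \ref{th12} together with the elementary fact that $\cd$ is dense in $\ad$, exploiting transitivity of density along the chain of inclusions $\qd\subseteq\cd\subseteq\ad$. (Recall $\qd=\spk\cap\cd\subseteq\cd$, and $\cd\subseteq\ad$ since the exact degree conditions $\deg(f)=d-2$, $\deg(g)=d-1$, $\deg(h)=d$ imply the corresponding inequalities defining $\ad$.)

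First I would verify that $\cd$ is dense in $\ad$. Under the homeomorphism $\eta:\ad\to\rtd$, the description of $\eta(\cd)$ recorded in the proof of Proposition \ref{th1} shows that $\eta(\cd)$ is the complement in $\rtd$ of the zero set of a single nonzero polynomial, namely $P=a_{d-2}\,b_{d-1}\,c_d$ for $d\geq3$ and $P=b_1 c_2$ for $d=2$. Since $P$ is not identically zero, its zero set is a proper algebraic hypersurface and hence has empty interior (is nowhere dense) in $\rtd$; therefore its complement $\eta(\cd)$ is dense in $\rtd=\eta(\ad)$. Because $\eta$ is a homeomorphism, $\cd$ is dense in $\ad$.

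Next I would run the transitivity argument, taking all closures inside $\ad$. By Theorem \ref{th12}, $\qd$ is dense in the subspace $\cd$, which means $\cd\subseteq\overline{\qd}$. Since the closure operator is monotone and idempotent, this gives $\overline{\cd}\subseteq\overline{\qd}$, and by the previous paragraph $\overline{\cd}=\ad$, so $\overline{\qd}=\ad$; that is, $\qd$ is dense in $\ad$. Equivalently, one may argue directly in the metric $\rho$: given $\phi\in\ad$ and $\epsilon>0$, first perturb the top coefficients $a_{d-2},b_{d-1},c_d$ (adding arbitrarily small terms to make each nonzero) to obtain $\phi'\in\cd$ with $\rho(\phi,\phi')<\epsilon/2$, then apply Theorem \ref{th12} to find $\psi\in\qd$ with $\rho(\phi',\psi)<\epsilon/2$, so that $\psi\in V(\phi,\epsilon)$ by the triangle inequality.

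Since both ingredients are already established, there is essentially no obstacle here; the corollary is genuinely a formal consequence. The only point deserving a moment's care is the transitivity step, where one must be careful that the density of $\qd$ in the \emph{subspace} $\cd$ translates into the inclusion $\cd\subseteq\overline{\qd}$ of closures computed in $\ad$, which it does precisely because $\cd$ carries the subspace topology inherited from $\ad$.
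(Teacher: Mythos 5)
Your proof is correct and follows exactly the route the paper intends: the paper states Corollary \ref{th14} without proof as an immediate consequence of Theorem \ref{th12}, the implicit ingredients being precisely your two steps, namely that $\eta(\cd)$ is the complement of the hypersurface $a_{d-2}b_{d-1}c_d=0$ (hence $\cd$ is dense in $\ad$) and that density is transitive along $\qd\subseteq\cd\subseteq\ad$. Your care with the subspace-topology point in the transitivity step is exactly the right thing to check, and your direct metric argument is a valid alternative phrasing of the same idea.
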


\begin{corollary}\label{th18}
The spaces $\od$ and $\pd$, for $d\geq2$, are dense in the space $\ad$.
\end{corollary}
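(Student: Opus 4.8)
The plan is to deduce this immediately from the density of $\qd$ already established in Corollary \ref{th14}, using the elementary fact that density passes to any larger subset of the ambient space. The crux is the single observation that $\qd$ is contained in both $\od$ and $\pd$.

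First I would record the set-theoretic inclusions $\cd\subseteq\ad$ and $\cd\subseteq\bd$ straight from the definitions. Indeed, any $(f,g,h)\in\cd$ satisfies the exact degree conditions $\deg(f)=d-2$, $\deg(g)=d-1$ and $\deg(h)=d$; these equalities at once imply the non-strict bounds $\deg(f)\leq d-2$, $\deg(g)\leq d-1$, $\deg(h)\leq d$ that define $\ad$, and they also yield the strict chain $\deg(f)<\deg(g)<\deg(h)\leq d$ that defines $\bd$. Intersecting each inclusion with $\spk$ then gives
\begin{align*}
\qd=\spk\cap\cd\subseteq\spk\cap\ad=\od\qquad\text{and}\qquad \qd=\spk\cap\cd\subseteq\spk\cap\bd=\pd.
\end{align*}

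With these inclusions in hand, I would finish by a one-line closure argument. By Corollary \ref{th14}, the closure of $\qd$ in $\ad$ is all of $\ad$, i.e. $\overline{\qd}=\ad$. Since $\qd\subseteq\od\subseteq\ad$, monotonicity of closure gives $\ad=\overline{\qd}\subseteq\overline{\od}\subseteq\ad$, hence $\overline{\od}=\ad$, so $\od$ is dense in $\ad$. Running the identical argument with $\pd$ in place of $\od$ shows that $\pd$ is dense in $\ad$ as well, and the reasoning is uniform in $d\geq 2$ (no separate treatment of the degenerate case $d=2$, where $\mathcal{P}_2=\mathcal{Q}_2$, is needed).

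I expect no genuine obstacle in this corollary: all of the analytic work — the perturbation of a knot in $\cd$ into one of full degree sequence $(d-2,d-1,d)$ without creating self-intersections or critical points — is carried out in Theorem \ref{th12} and inherited by Corollary \ref{th14}. The present statement is a purely formal consequence of the inclusion chain $\qd\subseteq\od,\pd\subseteq\ad$ combined with monotonicity of the topological closure.
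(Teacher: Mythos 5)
Your proposal is correct and is essentially the paper's own (implicit) argument: the paper states Corollary \ref{th18} without proof, precisely because it follows from Corollary \ref{th14} together with the inclusions $\qd\subseteq\od$ and $\qd\subseteq\pd$ and the fact that any superset of a dense subset is dense. Your verification of the inclusions from the degree conditions and the closure-monotonicity argument are exactly what is needed, and the case $d=2$ indeed requires no separate treatment.
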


Our next goal is to study the path components of the spaces $\od$, $\pd$ and $\qd$. We first ensure that each one of them can have only finitely many path components by proving the following theorem and its corollaries. 
 
\begin{theorem}\label{th26}
The space $\od$, for $d\geq2$, is homeomorphic to a semialgebraic subset of $\ro^{3d}$ and hence it has only finitely many path components.
\end{theorem}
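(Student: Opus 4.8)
The plan is to transport the problem to $\ro^{3d}$ via the homeomorphism $\eta:\ad\to\ro^{3d}$ and then exhibit $\eta(\od)$ as a semialgebraic set. Since $\eta$ restricts to a homeomorphism between $\od$ and $\eta(\od)$, it suffices to prove that $\eta(\od)$ is a semialgebraic subset of $\ro^{3d}$. Write a general element of $\ad$ as $\phi=\fght$ with $f(t)=a_0+a_1t+\cdots+a_{d-2}t^{d-2}$, $g(t)=b_0+b_1t+\cdots+b_{d-1}t^{d-1}$ and $h(t)=c_0+c_1t+\cdots+c_dt^d$, so that $\eta(\phi)$ is the tuple of its $3d$ coefficients. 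The key input is Lemma \ref{th8}: the map $\phi$ is an embedding (equivalently $\phi\in\od$) if and only if the polynomials $F,G,H$ associated to $\phi$ have no common zero in $\rw$. Thus $\phi$ fails to lie in $\od$ exactly when there exists $(s,t)\in\rw$ with $F(s,t)=G(s,t)=H(s,t)=0$.

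This existential condition is precisely what the Tarski--Seidenberg theorem handles. First I would define the set
\begin{equation*}
\widetilde{Z}=\big\{(\hp a_0,\ldots,a_{d-2},b_0,\ldots,b_{d-1},c_0,\ldots,c_d,s,t\hp)\in\ro^{3d}\times\rw\mid F(s,t)=G(s,t)=H(s,t)=0\big\}.
\end{equation*}
Because $F,G,H$ are polynomials in $s$ and $t$ whose coefficients are (in fact linear) polynomial expressions in the $a_i,b_i,c_i$, each of the three defining equations is a polynomial equation in the $3d+2$ coordinates of $\ro^{3d+2}$. Hence $\widetilde{Z}$ is an algebraic, and in particular semialgebraic, subset of $\ro^{3d+2}$. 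Let $\pi:\ro^{3d+2}\to\ro^{3d}$ be the projection onto the first $3d$ coordinates. By the description of non-embeddings above, $\pi(\widetilde{Z})=\eta(\ad)\setminus\eta(\od)=\ro^{3d}\setminus\eta(\od)$, that is, the set of coefficient tuples whose associated map is not an embedding. By Corollary \ref{th38} the set $\pi(\widetilde{Z})$ is a semialgebraic subset of $\ro^{3d}$, and therefore its complement $\eta(\od)=\ro^{3d}\setminus\pi(\widetilde{Z})$ is semialgebraic as well. This establishes that $\od$ is homeomorphic to a semialgebraic subset of $\ro^{3d}$.

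For the final assertion I would invoke Theorem \ref{th39}: the set $\eta(\od)$, being semialgebraic, has only finitely many connected components, each of which is itself semialgebraic. By Proposition \ref{th40} each such component is path connected, so the connected components are exactly the path components of $\eta(\od)$, and there are finitely many of them. Since $\od$ is homeomorphic to $\eta(\od)$, the space $\od$ has only finitely many path components. The one point that needs care is the reduction step: I must make sure that Lemma \ref{th8} really encodes both possible failures of the embedding condition (a genuine double point, where $s\neq t$, and a vanishing derivative, where $s=t$) as the single event that $F,G,H$ have a common zero, so that the complement of $\eta(\od)$ is captured exactly by the existential projection and no non-embedding is missed. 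Granting Lemma \ref{th8}, the remainder is a direct application of the quoted semialgebraic machinery and presents no serious obstacle.
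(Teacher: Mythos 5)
Your proposal is correct and follows essentially the same route as the paper: the set $\widetilde{Z}$ you construct is exactly the paper's set $U_d$, and both arguments encode the failure of the embedding condition via Lemma \ref{th8}, apply the Tarski--Seidenberg projection (Corollary \ref{th38}) to conclude that $\ro^{3d}\setminus\eta(\od)$ is semialgebraic, and pass to the complement. Your explicit closing appeal to Theorem \ref{th39} and Proposition \ref{th40} for the finiteness of path components is a point the paper leaves implicit, but it is the intended justification.
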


\begin{proof}
Let us denote real tuples $(a_0, a_1, \ldots, a_{d-2}),\ho(b_0, b_1, \ldots, b_{d-1})$\, and\, $(c_0, c_1, \ldots, c_d)$\, respectively by\, $\ba,\ho\bb$\, and\, $\bc$. For\, $\babc\in\ro^{3d}$, let $\phiabc:\ro\to\rt$ be a polynomial map given by 
\begin{equation*}
t\mapsto\abct\hp.
\end{equation*} 
Note that $\eta\hp(\phiabc)=\babc$ for $\babc\in\ro^{3d}$, where $\eta$ is the homeomorphism between the spaces $\ad$ and $\rtd$ as defined in Eq. (\ref{eq14}). Therefore, the  set $\eta\hp(\ad\setminus\od)$ can be written as follows:
\begin{equation*}
\eta\hp(\ad\setminus\od)=\left\{\hs\babc\in\ro^{3d}\mid\,\phiabc\hw\text{is not an embedding}\hs\right\}.
\end{equation*} 
For $\babc\in\ro^{3d}$, let $F_\ba,\hp G_\bb,\hp H_\bc:\rw\to\rt$ be polynomials which are given by
\begin{align*}
&(s,t)\xrightarrow{F_\ba}\past\hp,\\ 
&(s,t)\xrightarrow{G_\bb}\pbst\quad \mbox{and}\\  
&(s,t)\xrightarrow{H_\bc}\pcst\hp.
\end{align*}
For any $\babc\in\ro^{3d}$, by Lemma \ref{th8}, the polynomial map $\phiabc$ is not an embedding if and only if $F_\ba(s,t)=G_\bb(s,t)=H_\bc(s,t)=0$ for some $(s,t)\in\rw$. Therefore, the set $\eta\hp(\ad\setminus\od)$ is the image of the set
\begin{equation*}
U_d=\big\{\hs\abcst\in\ro^{3d+2}\mid F_\ba(s,t)=G_\bb(s,t)=H_\bc(s,t)=0\hs\big\}
\end{equation*} 
under the projection $\pi:\ro^{3d+2}\to\ro^{3d}$ onto the space of the first $3d$ coordinates. Note that the set $U_d$ is a semialgebraic subset of $\ro^{3d+2}$, so by Corollary \ref{th38}, the set $\eta\hp(\ad\setminus\od)$ is a semialgebraic subset of $\rtd$, hence so is its complement $\eta\hp(\od)=\ro^{3d}\setminus\eta\hp(\ad\setminus\od)$. By Theorem \ref{th39} and Proposition \ref{th40}, the set $\eta\hp(\od)$ has only finitely many path components and hence the same is true for the space $\od$ (since $\eta$ is a homeomorphism).
\end{proof}
	
\begin{corollary}\label{th27}
The space $\pd$, for $d\geq2$, is homeomorphic to a semialgebraic subset of $\ro^{3d}$ and hence it has only finitely many path components.
\end{corollary}

\begin{proof}
For $0\leq p<q<r\leq d$, let us consider the following set:
\begin{equation*}
W_{pqr}=\left\{\babc\in\ro^{3d}\mid\,\phiabc\hskip1.5mm\text{has the degree sequence}\hs (p,q,r)\right\}.
\end{equation*}
Note that for any $p<q<r\leq d$, the set $W_{pqr}$ is a semialgebraic subset of $\rtd$, hence so is the set $\eta\hp(\bd)=\mcup_{p<q<r}W_{pqr}$\hp. Therefore, the set $\eta\hp(\pd)=\eta\hp(\bd)\cap\eta\hp(\od)$ is a semialgebraic subset of $\rtd$ (since by Theorem \ref{th26}, the set $\eta\hp(\od)$ is semialgebraic). The rest half of the corollary follows from Theorem \ref{th39} and Proposition \ref{th40}.
\end{proof}
	
\begin{corollary}\label{th28}
The space $\qd$, for $d\geq2$, is homeomorphic to a semialgebraic subset of $\ro^{3d}$ and hence it has only finitely many path components.
\end{corollary}
	
\begin{proof}
Note that the sets $\eta\hp(\cd)$ and $\eta\hp(\od)$ are semialgebraic subsets of $\rtd$, hence so is their intersection $\eta\hp(\qd)=\eta\hp(\cd)\cap\eta\hp(\od)$. The rest half of the corollary follows from Theorem \ref{th39} and Proposition \ref{th40}.
\end{proof}

\begin{proposition}\label{th33}
Let $\phi\in\od$, and let $\alpha$ and $\beta$ be any real numbers such that $\alpha>0$. Suppose $\psi:\ro\rightarrow\rt$ be given by $t\mapsto\phi(\alpha\hp t +\beta)$. Then we have the following:
\begin{enumerate}[(1)]
\item Both $\phi$ and $\psi$ belong to the same path component of $\od$.
\item If $\phi\in\pd$, then both $\phi$ and $\psi$ belong to the same path component of $\pd$.
\item If $\phi\in\qd$, then both $\phi$ and $\psi$ belong to the same path component of $\qd$.
\end{enumerate}  
\end{proposition}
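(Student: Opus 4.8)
The plan is to join $\phi$ to $\psi$ by interpolating the reparametrizing affine map $t\mapsto\alpha t+\beta$ to the identity, keeping the leading coefficient positive so that Remark \ref{rm2} keeps us among embeddings at every stage. For $s\in\zo$ put $\alpha_s=(1-s)+s\alpha$ and $\beta_s=s\beta$, and define $H:\zo\times\ro\to\rt$ by $H(s,t)=H_s(t)=\phi(\alpha_s t+\beta_s)$. Then $H_0=\phi$ and $H_1=\psi$. Since both $1$ and $\alpha$ are positive, the convex combination $\alpha_s$ is strictly positive for every $s\in\zo$; hence, by Remark \ref{rm2} (applied with the nonzero scalar $\alpha_s$ and the translation $\beta_s$), each $H_s$ is a polynomial embedding, that is, a polynomial knot.

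Next I would verify that $s\mapsto H_s$ is a continuous path and that it remains in the required subspace. Expanding $\phi(\alpha_s t+\beta_s)$, the coefficients of $H_s$ are polynomial expressions in $\alpha_s$ and $\beta_s$, which in turn depend polynomially, hence continuously, on $s$; therefore $s\mapsto\eta(H_s)$ is a continuous curve in $\rtd$, so $s\mapsto H_s$ is continuous for the metric $\rho$ of (\ref{eq16}). The decisive point is that a substitution $t\mapsto\alpha_s t+\beta_s$ with $\alpha_s\neq0$ preserves the degree of every component polynomial \emph{exactly}: if a component $p$ has degree $n$ and leading coefficient $\lambda$, then $p(\alpha_s t+\beta_s)$ again has degree $n$, with leading coefficient $\lambda\alpha_s^{\,n}\neq0$. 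Writing $\phi=(f,g,h)$, this yields all three claims at once. If $\phi\in\od$, the bounds $\deg f\leq d-2$, $\deg g\leq d-1$, $\deg h\leq d$ persist for each $H_s$, so $H_s\in\od$ and (1) follows; if moreover $\phi\in\pd$, the strict chain $\deg f<\deg g<\deg h\leq d$ is preserved componentwise, so $H_s\in\pd$, proving (2); and if $\phi\in\qd$, the exact equalities $\deg f=d-2$, $\deg g=d-1$, $\deg h=d$ are preserved, so $H_s\in\qd$, proving (3).

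I do not expect a genuine obstacle in this argument; the only two places demanding attention are precisely the two hypotheses built into the interpolation. First, $\alpha_s$ must stay nonzero, indeed positive, along the whole path: this is exactly why $\alpha>0$ is assumed and why $\alpha$ is interpolated with $1$ rather than with $0$, ensuring that Remark \ref{rm2} applies uniformly in $s$ and that the path never leaves $\spk$. Second, the degree conditions cutting out $\od$, $\pd$ and $\qd$ inside $\ad$ must be stable under the reparametrization, which is immediate from the observation that a nonzero affine substitution neither raises nor lowers the degree of a polynomial. Continuity of $s\mapsto H_s$ in the coefficient topology is then automatic, and the three statements follow.
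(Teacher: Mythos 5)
Your proposal is correct and is essentially identical to the paper's own proof: the paper uses exactly the same path $F_s(t)=\phi\big((1-s+\alpha s)t+\beta s\big)$ and likewise invokes Remark \ref{rm2} to see that each $F_s$ is an embedding. The only difference is that you spell out the continuity of $s\mapsto H_s$ in the coefficient topology and the exact degree-preservation under the affine substitution, details the paper leaves implicit.
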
 

\begin{proof}
Let $F:\zo\to\ad$ be defined by $s\mapsto F_s$, where $F_s:\ro\to\rt$ be given by
\begin{equation*}  
F_s(t)=\phi\big((1-s+\alpha s)t+\beta s\big)
\end{equation*} 
for $t\in\ro$. By Remark \ref{rm2}, for each $s\in\zo$, $F_s$ is an element of the space $\od$. Note that $F_0=\phi$ and $F_1=\psi$. This shows that $F$ is a path in $\od$ connecting $\phi$ and $\psi$.\vskip0.1mm
If $\phi$ belongs to $\pd$ (respectively, belongs to $\qd$), then for each $s\in\zo$, $F_s$ is an element of the space $\pd$ (respectively, of the space $\qd$). Thus, the map $F$ becomes a path in $\pd$ (respectively, in $\qd$) connecting $\phi$ and $\psi$. 
\end{proof}  

\begin{proposition}\label{th34}
Let $\phi:\ro\to\rt$ given by $t\mapsto\fght$ be an element of the space $\od$. For $i\in\{1,2,3\}$, let $\alpha_i$'s be positive real numbers, and let $\beta_i$'s and $\gamma_i$'s be any real numbers. Suppose $\psi:\ro\to\rt$ be given by $t\mapsto\big(\alpha_1 f(t)+\gamma_1,\hp \alpha_2\hp g(t)+\beta_1 f(t)+\gamma_2,\hp \alpha_3 h(t) +\beta_2 f(t)+\beta_3\hp g(t)+\gamma_3\big)$. Then we have the following:
\begin{enumerate}[(1)]
\item Both $\phi$ and $\psi$ belong to the same path component of $\od$.
\item If $\phi\in\pd$, then both $\phi$ and $\psi$ belong to the same path component of $\pd$.
\item If $\phi\in\qd$, then both $\phi$ and $\psi$ belong to the same path component of $\qd$.
\end{enumerate} 
\end{proposition}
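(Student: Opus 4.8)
The plan is to realize $\psi$ as $T\circ\phi$ for a single orientation-preserving affine transformation $T$ of $\rt$, and then to connect $T$ to the identity through a continuous family of such transformations, composing this family with $\phi$ to get a path in $\od$. Concretely, $\psi=T\circ\phi$, where $T:\rt\to\rt$ is the affine map
\begin{equation*}
T(x,y,z)=\big(\,\alpha_1 x+\gamma_1,\ \beta_1 x+\alpha_2 y+\gamma_2,\ \beta_2 x+\beta_3 y+\alpha_3 z+\gamma_3\,\big).
\end{equation*}
Its linear part is lower triangular with diagonal $(\alpha_1,\alpha_2,\alpha_3)$, so its determinant is $\alpha_1\alpha_2\alpha_3>0$; thus $T$ is an invertible, orientation-preserving affine transformation in the sense of this paper.

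I would then define $F:\zo\to\ad$ by $F_s=T_s\circ\phi$, where $T_s$ has the same shape as $T$ but with diagonal entries $1-s+s\alpha_i$, off-diagonal entries $s\beta_i$, and translation $s\gamma_i$; explicitly,
\begin{equation*}
F_s(t)=\big(\,(1-s+s\alpha_1)f(t)+s\gamma_1,\ (1-s+s\alpha_2)g(t)+s\beta_1 f(t)+s\gamma_2,\ (1-s+s\alpha_3)h(t)+s\beta_2 f(t)+s\beta_3 g(t)+s\gamma_3\,\big).
\end{equation*}
One checks at once that $F_0=\phi$ and $F_1=\psi$. Each diagonal entry $1-s+s\alpha_i=(1-s)\cdot 1+s\cdot\alpha_i$ is a convex combination of the two positive numbers $1$ and $\alpha_i$, hence positive, so $\det T_s=\prod_i(1-s+s\alpha_i)>0$ and each $T_s$ is an orientation-preserving affine transformation. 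By Remark \ref{rm3}, $F_s=T_s\circ\phi$ is a polynomial embedding for every $s\in\zo$, and its coefficients are linear (hence continuous) in $s$, so $F$ is continuous.

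It then remains to verify that $F_s$ lands in the correct subspace. For part (1), the three components have degrees at most $\deg f\le d-2$, $\max\{\deg g,\deg f\}\le d-1$, and $\max\{\deg h,\deg f,\deg g\}\le d$ respectively, so $F_s\in\od$ and $F$ is the required path. The essential feature for parts (2) and (3) is that the construction is \emph{strictly lower triangular}: each component of $F_s$ equals a positive multiple of the corresponding component of $\phi$ plus terms coming only from components of strictly smaller degree bound. Hence when $\phi\in\pd$ (so $\deg f<\deg g<\deg h$), the top-degree term of each component of $F_s$ is merely a positive rescaling of the original one and cannot be cancelled by the added lower-degree terms; the strict inequalities between the three degrees therefore persist and $F_s\in\pd$. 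Similarly, when $\phi\in\qd$ the leading coefficients get multiplied by the positive factors $1-s+s\alpha_i$ and stay nonzero, so the degree sequence remains $(d-2,d-1,d)$ and $F_s\in\qd$.

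I expect the degree bookkeeping along the \emph{whole} path (not just at the endpoints) to be the only real point of care. The delicate case is part (3) with $d=2$: there $f$ is a nonzero constant, and the first component $(1-s+s\alpha_1)f+s\gamma_1$ is an affine function of $s$ that can vanish for some intermediate $s$ if $\gamma_1$ is chosen so that $\alpha_1 a_0+\gamma_1$ has sign opposite to $a_0$. Since the sign of the leading coefficient is a path-component invariant of $\qd$, the statement forces one to stay within the $\gamma_i$ for which $\psi$ lies in $\qd$ in the same sign-sector as $\phi$; for $d\ge 3$ this subtlety evaporates, because the first component then has degree $d-2\ge 1$ and the additive constant $\gamma_1$ never affects its leading coefficient.
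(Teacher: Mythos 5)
Your proof is essentially the paper's own: the paper defines exactly the same one-parameter family of lower-triangular affine transformations $T_s$ (diagonal entries $1-s+\alpha_i s$, off-diagonal entries $\beta_i s$, translations $\gamma_i s$), sets $H_s = T_s\circ\phi$, and invokes Remark \ref{rm3} to conclude each $H_s$ lies in $\od$ (respectively $\pd$, $\qd$). Your extra degree bookkeeping and the $d=2$ sign-sector caveat only make explicit what the paper leaves implicit; in particular, the paper's own description of $\mathcal{C}_2$ (in the proof of Proposition \ref{th1}) imposes no condition on the constant first component, which dissolves the $d=2$ worry.
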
 

\begin{proof}  
For $s\in\zo$, define a map $T_s:\rt\to\rt$ by
\begin{align*}
T_s(x, y, z)&=\big( (1-s+\alpha_1s)x+\gamma_1s,\; \beta_1sx+(1-s+\alpha_2s)y+\gamma_2s,\; 
\beta_2sx+\beta_3sy\\ 
\hp&\hskip5.7mm+(1-s+\alpha_3s)z+\gamma_3s\big)
\end{align*}
for $(x, y, z)\in\rt$. Note that $T_s$, for $s\in\zo$, is an affine transformation. By Remark \ref{rm3}, for each $s\in\zo$, $H_s=T_s\circ\phi$ is an element of the space $\od$. Note that $H_0=\phi$ and $H_1=\psi$. Thus, the map $H:\zo\to\od$ given by $s\mapsto H_s$ is a path in $\od$ from $\phi$ to $\psi$.\vskip0.1mm
If $\phi$ belongs to $\pd$ (respectively, belongs to $\qd$), then for each $s\in\zo$, $H_s$ is an element of the space $\pd$ (respectively, of the space $\qd$). Therefore, the map $H$ becomes a path in $\pd$ (respectively, in $\qd$) connecting $\phi$ and $\psi$. 
\end{proof}  

\begin{corollary}\label{th35}
Every path component of each of the spaces $\od,\, \pd$ and $\qd$ is unbounded.
\end{corollary}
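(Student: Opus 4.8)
The plan is to exhibit, inside the path component of any given knot, a one-parameter family of knots whose coefficient vectors have arbitrarily large norm. The natural device is Proposition \ref{th34}, specialized to a pure dilation.

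First I would fix a path component $C$ of one of the spaces $\od$, $\pd$, or $\qd$, and pick any $\phi\in C$, writing $\phi=(f,g,h)$. Applying Proposition \ref{th34} with $\alpha_1=\alpha_2=\alpha_3=\lambda$ for a positive real number $\lambda$, and with all $\beta_i$ and $\gamma_i$ equal to zero, produces the knot $\psi_\lambda$ given by
\begin{equation*}
\psi_\lambda(t)=\big(\lambda f(t),\,\lambda g(t),\,\lambda h(t)\big)=\lambda\,\phi(t).
\end{equation*}
By that proposition, $\psi_\lambda$ lies in the same path component $C$ as $\phi$ for every $\lambda>0$; moreover the conclusion holds simultaneously for whichever of $\od$, $\pd$, $\qd$ contains $\phi$, since a pure dilation preserves the degree sequence and hence membership in each of these spaces.

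Next I would compute the effect on the coefficient vector under the identification $\eta$. Since $\eta$ is linear in the coefficients, $\eta(\psi_\lambda)=\lambda\,\eta(\phi)$, and therefore
\begin{equation*}
\rho\hp(\psi_\lambda,\phi)=\xi\big(\lambda\,\eta(\phi),\eta(\phi)\big)=\lvert\lambda-1\rvert\,\big\lVert\eta(\phi)\big\rVert.
\end{equation*}
The only point requiring justification is that $\eta(\phi)\neq0$: this holds because a polynomial knot is a smooth embedding and hence a non-constant map, so at least one of its non-constant coefficients is nonzero, giving $\lVert\eta(\phi)\rVert>0$. Letting $\lambda\to\infty$ then forces $\rho\hp(\psi_\lambda,\phi)\to\infty$, so $C$ has infinite diameter and is therefore unbounded.

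There is essentially no hard step here; the substantive work has already been carried out in Proposition \ref{th34}, which verifies that the dilation family stays within the relevant space of knots. The one place that calls for a word of explanation is the non-vanishing of $\eta(\phi)$, which is immediate from the embedding condition. I do not expect any genuine obstacle.
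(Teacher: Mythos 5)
Your proof is correct and follows essentially the same route as the paper: both apply Proposition \ref{th34} with a pure dilation ($\beta_i=\gamma_i=0$, $\alpha_i$ large) to produce knots in the same path component at arbitrarily large distance. The only cosmetic differences are that you measure distance from $\phi$ rather than from the zero map, and you explicitly justify $\eta(\phi)\neq0$ (a detail the paper's argument also needs but leaves implicit).
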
  

\begin{proof}
Let $U$ be a path component of the space $\od$, and let $\phi$ be a polynomial knot belonging to it. In Proposition \ref{th34}, by taking $\alpha_i$'s arbitrarily large, and $\beta_i$'s and $\gamma_i$'s to be zero, the distance of $\psi$ from the zero map $0:\ro\to\rt$ (which is given by $0(t)=(0, 0, 0)$ for $t\in\ro$) can be made arbitrarily large. Thus, there are polynomial knots at arbitrarily large distances from the origin which belong to $U$. In other words, each path component of the space $\od$ is unbounded. Similarly, every path component of the spaces $\pd$ and $\qd$ is unbounded. 
\end{proof}

\begin{corollary}\label{th36}
Every neighborhood of the zero map $0\in\ad$ intersects each path component of the spaces $\od, \pd$ and $\qd$.
\end{corollary}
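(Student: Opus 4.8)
The statement to prove is Corollary \ref{th36}: every neighborhood of the zero map $0\in\ad$ intersects each path component of the spaces $\od$, $\pd$ and $\qd$.

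\medskip

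The plan is to combine the two facts that have just been established: by Corollary \ref{th35} every path component of each of these spaces is unbounded, and by Propositions \ref{th33} and \ref{th34} we have explicit continuous deformations that move a given knot toward the origin while staying inside the same path component. The key observation is that the scaling deformation of Proposition \ref{th34}, with the $\beta_i$'s and $\gamma_i$'s all set to zero and with the $\alpha_i$'s taken \emph{small and positive}, shrinks a polynomial knot $\phi$ arbitrarily close to the zero map without ever leaving the relevant path component.

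\medskip

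First I would fix a path component, say of $\od$, and pick any polynomial knot $\phi=(f,g,h)$ lying in it. For a positive real number $\lambda$, apply Proposition \ref{th34} with $\alpha_1=\alpha_2=\alpha_3=\lambda$ and all $\beta_i=\gamma_i=0$; this produces the knot $\psi_\lambda=(\lambda f,\hp \lambda g,\hp \lambda h)$ which, by that proposition, lies in the same path component of $\od$ as $\phi$. Since $\eta(\psi_\lambda)=\lambda\hp\eta(\phi)$ under the identifying homeomorphism $\eta:\ad\to\ro^{3d}$, the metric $\rho$ gives $\rho(\psi_\lambda,0)=\lambda\hp\rho(\phi,0)$, so letting $\lambda\to0^+$ makes $\rho(\psi_\lambda,0)$ as small as we wish. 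Hence for any $\epsilon>0$ we can choose $\lambda$ small enough that $\psi_\lambda$ lies in the $\epsilon$-neighborhood $V(0,\epsilon)$, and $\psi_\lambda$ still belongs to the chosen path component. This shows that every neighborhood of $0$ meets that path component, and since the component was arbitrary, every path component is met.

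\medskip

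The same argument works verbatim for $\pd$ and $\qd$, because Proposition \ref{th34} guarantees that the scaled knot $\psi_\lambda$ stays in $\pd$ (respectively $\qd$) whenever $\phi$ does, and the degree sequence is preserved under multiplying each coordinate by the same positive constant $\lambda$. I do not anticipate a genuine obstacle here: the only point requiring a moment's care is confirming that the scaling by $\lambda$ preserves membership in each space, which is immediate since scaling each component polynomial by a common positive factor neither introduces self-intersections or critical points (by Remark \ref{rm3}, as $\psi_\lambda=T\circ\phi$ for the invertible linear map $T(x,y,z)=(\lambda x,\lambda y,\lambda z)$) nor alters any degree. Thus the corollary follows directly from Propositions \ref{th33}, \ref{th34} and the scaling computation.
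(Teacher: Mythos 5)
Your proof is correct and follows essentially the same route as the paper: the paper's own argument likewise applies Proposition \ref{th34} with the $\alpha_i$'s taken arbitrarily small and the $\beta_i$'s and $\gamma_i$'s set to zero, so that the scaled knot stays in the given path component of $\od$ (respectively $\pd$, $\qd$) while its distance to the zero map becomes arbitrarily small. Your explicit computation $\rho(\psi_\lambda,0)=\lambda\hp\rho(\phi,0)$ via $\eta$ just makes the paper's phrase \textquotedblleft can be made arbitrarily small\textquotedblright\ precise.
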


\begin{proof}
Let $U$ be a path component of the space $\od$, and let $\phi\in U$. In Proposition \ref{th34} by taking $\alpha_i$'s arbitrarily small, and $\beta_i$'s and $\gamma_i$'s to be zero, the distance of $\psi$ from the zero map can be made arbitrarily small. Hence, there are polynomial knots in $U$ which are arbitrarily close to the origin. In other wards, every neighborhood of the zero map intersects each path component of the space $\od$. Similarly, the result can be proved for the spaces $\pd$ and $\qd$. 
\end{proof}
%%%%%%%%%%%%%%%%%%%%%%%%%%%%%%%%%%%%%%%%%%%%%%%%%%%%%%%%%%%%%%%%%%%%%%%%%%%%%%%%%%%%%%%%

\section{Main Results}\label{sec4}

In Section \ref{sec3}, we introduced the spaces $\od$, $\pd$ and $\qd$ consisting of polynomial knots of degree at most $d$. We define path equivalence in these spaces as:

\begin{definition}
Two polynomial knots in $\od$ (respectively, in $\pd$/in $\qd$) are said to be path equivalent in $\od$ (respectively, in $\pd$/in $\qd$) if they belong to its same path component.
\end{definition}

We would like to study the path equivalence of polynomial knots in the spaces $\od$, $\pd$ and $\qd$ as compared to their topological equivalence. We also estimate the path components of these spaces and look into the homotopy types of their path components. 
%%%%%%%%%%%%%%%%%%%%%%%%%%%%%%%%%%%%%%%%%%%%%%%%%%%%%%%%%%%%%%%%%%%%%%%%%%%%%%%%%%%%%%%%%%%%%%%%%%

\subsection{Path components of the spaces}\label{sec4.1}

\begin{theorem}\label{th9}
Every polynomial knot in $\od$ is connected by a path in $\od$ to some polynomial knot in $\qd$.
\end{theorem} 
 
\begin{proof}
Let $\phi$ be an element of the space $\od$. By Corollaries \ref{th14} and \ref{th28}, the set $\eta\hp(\qd)$ is dense in $\rtd$ and it has only finitely many path components, so the element $\eta\hp(\phi)$ must belong to the closure of some path component, say $U$, of $\eta\hp(\qd)$. The set $\eta\hp(\qd)$ is a semialgebraic subset of $\rtd$, so by Theorem \ref{th39}, $U$ is also a semialgebraic subset of $\rtd$. This implies that the set $U\cup\{\eta\hp(\phi)\}$ is a semialgebraic subset of $\rtd$, since a one point set is always semialgebraic. Note that $U\cup\{\eta\hp(\phi)\}$ is connected; therefore, by Proposition \ref{th40}, it is path connected, and hence so is the set $\eta^{-1}(U)\cup\{\phi\}$. In other words, there is a path in $\eta^{-1}(U)\cup\{\phi\}\subseteq\od$ from $\phi$ to an element $\psi\in\eta^{-1}(U)\subseteq\qd$.
\end{proof}

\begin{corollary}\label{th21}
Every polynomial knot in $\od$ is connected by a path in $\od$ to some polynomial knot in $\pd$.
\end{corollary}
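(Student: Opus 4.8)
The plan is to reduce this immediately to Proposition \ref{th9}. First I would record the elementary containment $\qd \subseteq \pd$. Indeed, any polynomial knot in $\qd = \spk \cap \cd$ has degree sequence $(d-2, d-1, d)$, so its component polynomials satisfy $\deg(f) = d-2 < \deg(g) = d-1 < \deg(h) = d \le d$, which is exactly the strict-inequality condition defining $\bd$. Hence $\cd \subseteq \bd$, and intersecting with $\spk$ gives $\qd = \spk \cap \cd \subseteq \spk \cap \bd = \pd$. This inclusion is already visible in the Hasse diagram of $\mathfrak{F}$.

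Given this containment, the corollary is immediate. For an arbitrary $\phi \in \od$, Proposition \ref{th9} supplies a path in $\od$ from $\phi$ to some polynomial knot $\psi \in \qd$. Since $\qd \subseteq \pd$, the endpoint $\psi$ already lies in $\pd$, so the very same path exhibits $\phi$ as connected within $\od$ to a polynomial knot in $\pd$. No new homotopy or semialgebraic argument is needed.

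I expect no genuine obstacle here: the only verification is the degree-condition comparison establishing $\qd \subseteq \pd$, which is routine. All the substantive work was done in Proposition \ref{th9}, which exploited the density of $\eta(\qd)$ in $\rtd$ (Corollary \ref{th14}), the finiteness of its path components coming from semialgebraicity (Corollary \ref{th28} via Theorem \ref{th39}), and the fact that a connected semialgebraic set is path connected (Proposition \ref{th40}). Thus the present statement is best presented as a short corollary that invokes Proposition \ref{th9} together with the inclusion $\qd \subseteq \pd$.
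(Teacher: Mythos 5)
Your proof is correct and is exactly the argument the paper intends: Corollary \ref{th21} is stated as an immediate consequence of Proposition \ref{th9}, using the containment $\qd\subseteq\pd$ (visible in the paper's Hasse diagram, since $\cd\subseteq\bd$). Nothing further is needed.
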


\begin{corollary}\label{th23}
Every polynomial knot in $\pd$ is connected by a path in $\pd$ to some polynomial knot in $\qd$.
\end{corollary}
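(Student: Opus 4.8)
The plan is to run exactly the argument used for Proposition \ref{th9}, but carried out entirely inside $\pd$ rather than inside $\od$. The observation that makes this possible is the inclusion $\qd\subseteq\pd$: a polynomial knot with degree sequence $(d-2,d-1,d)$ certainly satisfies $\deg(f)<\deg(g)<\deg(h)\leq d$, so every element of $\cd$ lies in $\bd$. Passing through the homeomorphism $\eta$, this gives $\eta\hp(\qd)\subseteq\eta\hp(\pd)$, and in particular every path component of $\eta\hp(\qd)$ is contained in $\eta\hp(\pd)$.

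First I would fix $\phi\in\pd$. By Corollary \ref{th14} the set $\eta\hp(\qd)$ is dense in $\rtd$, so its closure is all of $\rtd$ and hence contains $\eta\hp(\phi)$. By Corollary \ref{th28} the set $\eta\hp(\qd)$ is semialgebraic, so by Theorem \ref{th39} it has only finitely many path components $U_1,\ldots,U_k$, each of which is itself semialgebraic. Since the closure of a finite union is the union of the closures, $\eta\hp(\phi)$ lies in $\overline{U_i}$ for at least one index $i$; I fix such a component and call it $U$.

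Next I would examine the set $U\cup\{\eta\hp(\phi)\}$. It is semialgebraic, being the union of the semialgebraic set $U$ with a single point, and it is connected because $\eta\hp(\phi)$ lies in the closure of the connected set $U$. By Proposition \ref{th40} it is therefore path connected, so there is a path in $U\cup\{\eta\hp(\phi)\}$ from $\eta\hp(\phi)$ to some point $\eta\hp(\psi)$ with $\psi\in\eta^{-1}(U)\subseteq\qd$. Because $U\subseteq\eta\hp(\qd)\subseteq\eta\hp(\pd)$ and $\eta\hp(\phi)\in\eta\hp(\pd)$, the entire path lies in $\eta\hp(\pd)$, and applying the homeomorphism $\eta^{-1}$ yields a path in $\pd$ from $\phi$ to $\psi\in\qd$, as required.

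I do not anticipate a genuine obstacle, since the argument is structurally identical to that of Proposition \ref{th9}. The only point demanding care is the verification that the path remains inside $\pd$ rather than merely inside $\od$; this is precisely where the inclusion $\qd\subseteq\pd$ together with the hypothesis $\phi\in\pd$ is used. It is worth emphasizing that one needs only density of $\qd$ in the full ambient space $\rtd$ (Corollary \ref{th14}) to place $\eta\hp(\phi)$ in $\overline{\eta\hp(\qd)}$, and not the a priori stronger assertion that $\qd$ is dense in $\pd$.
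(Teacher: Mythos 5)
Your proposal is correct and coincides with the argument the paper intends: Corollary \ref{th23} is left without a written proof precisely because it follows by rerunning the proof of Proposition \ref{th9} verbatim and observing, as you do, that the connecting path lies in $U\cup\{\eta(\phi)\}\subseteq\eta(\qd)\cup\{\eta(\phi)\}\subseteq\eta(\pd)$, thanks to the inclusion $\qd\subseteq\pd$ (a degree sequence $(d-2,d-1,d)$ certainly satisfies $\deg(f)<\deg(g)<\deg(h)\leq d$). Your closing remark is also apt: only the density of $\eta(\qd)$ in $\rtd$ (Corollary \ref{th14}) is needed to place $\eta(\phi)$ in the closure of some component of $\eta(\qd)$, exactly as in the paper's proof of Proposition \ref{th9}.
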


For $\phi\in\ad$, let us denote its first, second and third components by $f_\phi,\, g_\phi$ and $h_\phi$ respectively, and for\, $i\in\{\hp0,1,\ldots, d\hp\}$,\, we denote the coefficients of $t^i$ in the polynomials $f_\phi,\, g_\phi$ and $h_\phi$ respectively by $a_{\phi i},\,b_{\phi i}$\, and $c_{\phi i}$. Sometimes we use letters like $\phi, \psi, \sigma, \tau$ and $\omega$ to denote the elements of $\ad$. In such cases, the components and their coefficients will be denoted using the respective subscripts.

\begin{theorem}\label{th4}
The space $\pd$, for $d\geq3$, is path connected. 
\end{theorem}

\begin{proof}
Let $\phi$ be an arbitrary but fixed element of the space $\pd$. By Corollary \ref{th23}, there is a path in $\pd$ from $\phi$ to some element $\psi\in\qd$. Note that $\qd$ is open in $\ad$, so we have a path in $\qd$ from $\psi$ to an element $\sigma\in\qd$ for which the coefficient $b_{\sigma1}$ of $t$ in the second component is nonzero. Let $\Lambda:\zo\to\qd$ be a map given by
\begin{equation*}
\Lambda(s)=\left(f_\sigma-a_{\sigma0}\hp s,\, g_\sigma-b_{\sigma0}\hp s,\, h_\sigma-\frac{c_{\sigma1}\hp g_\sigma-b_{\sigma0}\hp c_{\sigma1}+b_{\sigma1}\hp c_{\sigma0}}{b_{\sigma1}}\hp s\right)
\end{equation*} 
for $s\in\zo$. Let $\tau=\Lambda(1)$, then we have $a_{\tau0}=b_{\tau0}=c_{\tau0}=c_{\tau1}=0$ and $b_{\tau1}=b_{\sigma1}\neq0$. Obviously, the map $\Lambda$ is a path in $\qd$ from $\sigma$ to $\tau$.\vskip0.1mm 
Since $\qd$ is open in $\ad$, we can find a path in $\qd$ from $\tau$ to an element $\omega\in\qd$ which differs from $\tau$ only in the coefficient of $t^2$ in the third component and that coefficient is nonzero in $\omega$. For $s\in\zo$, we define polynomials $f_s,\,g_s$ and $h_s$ as follows:
\begin{align*}
f_s(t)&=a_{\omega1}s\,t+a_{\omega2}s^2\,t^2+\cdots+a_{\omega, d-2}s^{d-2}\,t^{d-2}\,,\\
g_s(t)&=b_{\omega1}t+b_{\omega2}s\,t^2+\cdots+b_{\omega, d-1}s^{d-2}\,t^{d-1}\quad\mbox{and}\\
h_s(t)&=c_{\omega2}t^2+c_{\omega3}s\,t^3+\cdots+c_{\omega d}s^{d-2}\,t^d.
\end{align*}
Let $\Gamma:\zo\to\ad$ be a map given by $\Gamma(s)=\Gamma_s$ for $s\in\zo$, where $\Gamma_s$ is defined as $t\mapsto\big(f_s(t),\hp g_s(t),\hp h_s(t)\big)$. Note that $a_{\omega0}=a_{\tau0}=0,\; b_{\omega0}=b_{\tau0}=0,\; c_{\omega0}=c_{\tau0}=0$ and $c_{\omega1}=c_{\tau1}=0$; therefore, we have $\Gamma(1)=(f_1, g_1, h_1)=\omega$. For $(s,t)\in\ozo\times\ro$, we have
\begin{align}
& f_s(t)=f_1(s\hf t),\; s\hf g_s(t)=g_1(s\hf t)\hw\mbox{and}\hw s^2\hf h_s(t)=h_1(s\hf t),\hskip1.5mm\mbox{and}\label{eq15}\\
& f_s'(t)=s\hp f_1'(s\hf t),\; g_s'(t)=g_1'(s\hf t)\hw\mbox{and}\hw s\hf h_s'(t)=h_1'(s\hf t).\label{eq27}
\end{align} 
\noindent Thus, for $s\in\ozo$ and $u,v,t\in\ro$ with $u\neq v$, we have $\Gamma_s'(t)=\big(f_s'(t),\,g_s'(t),\,h_s'(t)\big)\neq0$ and $\Gamma_s(u)\neq\Gamma_s(v)$ (since otherwise by Expressions (\ref{eq15}) and (\ref{eq27}) we will have $\omega'(s\hf t)=\big(f_1'(s\hf t),\,g_1'(s\hf t),\,h_1'(s\hf t)\big)=0$\, or\, $\omega(s\hf u)=\omega(s\hf v)$, a contradiction to the fact that $\omega$ is an embedding). This shows that the element $\Gamma(s)=\Gamma_s$, for any $s\in\ozo$, is an embedding and hence a member of the space $\pd$. Note that a map $\tilde{\omega}:\ro\to\rt$ given by $t\mapsto(0,\,b_{\omega1}t,\,c_{\omega2}t^2)$ is an element of the space $\pd$, since $b_{\omega1}=b_{\tau1}\neq0$ and $c_{\omega2}\neq0$. It is easy to see that $\Gamma(0)=(f_0, g_0, h_0)=\tilde{\omega}$. This shows that $\Gamma$ is a path in $\pd$ from $\tilde{\omega}$ to $\omega$.\vskip0.1mm 
Let $\phi_0$ be a fixed element of the space $\pd$ which is given by $t\mapsto(0,\,t,\,t^2)$. Take a map $\Omega:\zo\to\ad$ given by $s\mapsto\Omega_s$, where
\begin{equation*} 
\Omega_s(t)=\left(s(1-s)t,\,(b_{\omega1}\hf s+1-s)t+s(1-s)t^2,\,(c_{\omega2}\hf s+1-s)t^2+s(1-s)t^3\right)
\end{equation*}
for $t\in\ro$. Note that $\Omega$ is path in $\pd$ from $\phi_0$ to $\tilde{\omega}$. By using the paths (which are discussed above) between the consecutive elements in the list $\phi,\,\psi,\,\sigma,\,\tau,\,\omega,\,\tilde{\omega}$ and $\phi_0$, one gets a new path in $\pd$ from $\phi$ to the fixed element $\phi_0$ which belongs this space. 
\end{proof}

\begin{theorem}\label{th13}
If two polynomial knots are path equivalent in $\qd$, then they are topologically equivalent. 
\end{theorem}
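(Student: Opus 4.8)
The plan is to promote a path in $\qd$ to a geometrically controlled isotopy of the underlying long knots, and then to recognise this isotopy, after one-point compactification, as an ambient isotopy of knots in $\st$. First I would apply Corollary \ref{th5}: path-equivalent knots $\phi,\psi\in\qd$ are joined by an isotopy of polynomial knots $F:\zo\times\ro\to\rt$ lying entirely in $\qd$, with $F_0=\phi$ and $F_1=\psi$. Writing $F_s=F(s,\,\cdot\,)$, every $F_s$ has degree sequence $(d-2,d-1,d)$, so the degree of $F_s$ is the constant $d$, independent of $s\in\zo$. Each $F_s$ is a long knot, so by the discussion in Section \ref{sec2.1} it extends, through the inverse stereographic projection from the north pole $N$ of $\st$, to a tame knot $\widetilde{F_s}:\so\to\st$ with $\widetilde{F_s}(\infty)=N$, where $\so=\ro\cup\{\infty\}$.

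The essential input is Proposition \ref{th16}, applied to the constant-degree family $\{F_s\mid s\in\zo\}$. It produces a single $r_0\gg0$, independent of $s$, such that for every $s\in\zo$ and every $\lvert t\rvert\ge r_0$ the curve $F_s$ meets each sphere centred at the origin transversally (a monotone radial escape), and the angle between $F_s(t)$ and $F_s'(t)$ tends to $0$ as $t\to+\infty$ and to $\pi$ as $t\to-\infty$. The uniform lower bound on the leading coefficients obtained in its proof also gives $\|F_s(t)\|\to\infty$ uniformly in $s$ as $\lvert t\rvert\to\infty$. I would exploit these in two ways. Uniform properness lets the map $(s,t)\mapsto\widetilde{F_s}(t)$ extend continuously to all of $\zo\times\so$ by setting $(s,\infty)\mapsto N$. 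Moreover each $F_s$ is injective on $\ro$ and never attains the value $N$, while the controlled ends give each tail a limiting tangent direction at $N$; hence each $\widetilde{F_s}$ is an injective, piecewise-$C^1$ map of the compact space $\so$, and therefore a tame knot. Thus $\widetilde{F}:\zo\times\so\to\st$ is an isotopy of tame knots from $\widetilde{\phi}$ to $\widetilde{\psi}$.

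It then remains to upgrade this isotopy of tame knots to an ambient isotopy, which is precisely what topological equivalence requires. I would prefer to carry this out downstairs in $\rt$, where no cusps at $N$ intervene: by Proposition \ref{th16} the family $F$ is a proper isotopy whose ends are standard outside the ball of radius fixed by $r_0$, so the track $(s,t)\mapsto(s,F_s(t))$ is a proper embedding of $\zo\times\ro$ and the isotopy-extension theorem for proper isotopies yields a diffeotopy $\{G_s\}$ of $\rt$ with $G_0=\mathrm{id}$ and $G_1\circ F_0=F_1$. Extending $\{G_s\}$ to $\st$ so as to fix $N$ then exhibits $\widetilde{\phi}\simeq\widetilde{\psi}$, i.e.\ $\phi$ and $\psi$ represent the same knot-type.

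The main obstacle is exactly this control at infinity and the upgrade it supports. Without the single $r_0$ and the uniform bound on the leading coefficient from Proposition \ref{th16}, the compactified family need not be continuous at the point at infinity, since the tails of different $F_s$ could escape at incomparable rates, and one could not arrange the ambient isotopy to be stationary outside a fixed ball. Converting the isotopy of tame knots into a genuine ambient isotopy is the delicate point: it uses that the isotopy is proper and that the slices stay uniformly tame, not merely that each individual slice is an embedding, since an uncontrolled isotopy of knots in $\st$ can alter the knot-type.
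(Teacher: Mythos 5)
Your reduction to an isotopy of polynomial knots of constant degree, and your use of Proposition \ref{th16} to control the ends uniformly and compactify the family to an isotopy of tame knots $\widetilde{F}:\zo\times\so\to\st$, runs parallel to the first half of the paper's argument (the paper takes a smooth path, using that $\qd$ is open in $\ad$, and studies the closure of the track of the isotopy inside $\zo\times\st$). The gap is in your third paragraph: the ``isotopy-extension theorem for proper isotopies'' that you invoke does not exist in the generality in which you quote it, and properness of the track is genuinely insufficient. Concretely: let $\phi$ be a long trefoil with $\phi(t)=(0,0,t)$ for $\lvert t\rvert\geq1$, let $c:[0,1)\to[0,\infty)$ be smooth with $c(0)=0$ and $c(s)\to\infty$ as $s\to1$, and set $F_s(t)=\phi\big(t+c(s)\big)-\big(0,0,c(s)\big)$ for $s<1$ and $F_1(t)=(0,0,t)$. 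This is a jointly smooth isotopy of proper embeddings, each slice is standard outside a compact set, and the track $(s,t)\mapsto\big(s,F_s(t)\big)$ is a proper smooth embedding of $\zo\times\ro$ in $\zo\times\rt$; yet it joins a trefoil to the unknot, so no ambient diffeotopy can carry $F_0$ to $F_1$. (This family is of course not a constant-degree polynomial family: the knotted arc escapes to infinity, violating exactly the uniform radial transversality of Proposition \ref{th16}. But that observation cuts against you: it shows that any true extension theorem must have the uniform end-control built into its hypotheses, and there is no off-the-shelf theorem of that kind to cite --- proving one is the actual content of the problem.)

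Converting the uniform control at infinity into a topological equivalence is precisely the technical core of Theorem \ref{th13}, and the paper does not obtain it from any isotopy-extension principle. Instead it stratifies $\zo\times\st$ by the complement of the closure $\bar{K}$ of the compactified track, the track $K$ itself, and the north-pole line $\zo\times\{N\}$; it verifies the Whitney condition (b) for the pair $(K,\zo\times\{N\})$ using exactly the angle estimates of Proposition \ref{th16}; and it applies Thom's First Isotopy Lemma to conclude that the projection $\zo\times\st\setminus\bar{K}\to\zo$ is a trivial fibration, so that the complements $\st\setminus\tilde{\alpha}_0(\so)$ and $\st\setminus\tilde{\alpha}_1(\so)$ are homeomorphic. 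Even after all of this, the paper cannot produce an ambient isotopy directly: it must invoke the Gordon--Luecke theorem \cite{gl} (knots are determined by their complements) to pass from homeomorphic complements to equivalent knots. That the published proof has to route through stratification theory plus a deep theorem of $3$-manifold topology is a strong indication that your step ``proper isotopy with controlled ends $\Rightarrow$ diffeotopy'' cannot be discharged by citation; filling it in honestly would amount either to redoing the Whitney-stratification argument or to finding a genuinely new one.
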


\begin{proof}
Let $\phi$ and $\psi$ be two polynomial knots which belong to the same path component of the space $\qd$. Since $\qd$ is open in $\ad$, we can choose a smooth path $\alpha:\zo\to\qd$ such that $\alpha(0)=\phi$ and $\alpha(1)=\psi$. Consider the map $F:\zo\times\ro\to\rt$ given by $F(s,t)=\alpha_s(t)$ for $(s,t)\in\zo\times\ro$, where $\alpha_s=\alpha(s)$ for $s\in\zo$. This map is smooth and it is an isotopy of polynomial knots in $\qd$ connecting $\phi$ and $\psi$. Let $H:\zo\times\ro\to\zo\times\rt$ be the track of the isotopy $F$; that is, $H(s,t)=\big(s, F(s,t)\big)=\big(s,\alpha_s(t)\big)$ for $(s,t)\in\zo\times\ro$. Note that $H$ is a level preserving map and it is a smooth embedding of $\zo\times\ro$ in $\zo\times\rt$, so its image $H(\zo\times\ro)$ is a smooth submanifold of $\zo\times\rt$. One can identify $\zo\times\rt$ as a smooth submanifold of $\zo\times\st$ by the embedding $1\times\Gamma:\zo\times\rt\to\zo\times\st$, where $1$ is the identity mapping of $\zo$, and $\Gamma$ is the inverse of the stereographic projection from the north pole $N$ of $\st$. Let $K$ be the image of $(1\times\Gamma)\circ H$ and let\hskip1.7mm$\xbar{K}$\hskip1.7mm be its closure in $\zo\times\st$. It is obvious that the collection $\mathcal{S}=\big\{\zo\times\st\setminus\hf\xbar{K}, K, \zo\times\{N\}\big\}$ is a collection of pairwise disjoint smooth submanifolds of $\zo\times\st$ which covers it. Thus, the collection $\mathcal{S}$ becomes a pre-stratification for $\zo\times\st$ (see \cite[\S~5]{jm}). Note that $\mathcal{S}$ is a finite collection. Also, it is easy to check that it satisfies the condition of frontier (see \cite[\S~5]{jm}). Since $\zo\times\st\setminus\hf\xbar{K}$ is an open submanifold of $\zo\times\st$, its dimension is $4$. Therefore, it is trivial that the pairs $(\zo\times\st\setminus\hf\xbar{K}, K)$ and $(\zo\times\st\setminus\hf\xbar{K}, \zo\times\{N\})$ of strata in $\mathcal{S}$ satisfy the second Whitney condition (see \cite[\S~2]{jm}).\vskip0.1mm
 
We claim that the pair $(K, \zo\times\{N\})$ satisfies the second Whitney condition. Let $y=(u,N)$ be an element in $\zo\times\{N\}$, and let $\Phi:U\to\ro^4$ be a smooth chart of $\zo\times\st$ at $y$. We need to check that the pair $\big(\Phi(U\cap K), \Phi(U\cap\zo\times\{N\})\big)$ satisfies the second Whitney condition at $\Phi(y)$. Let $\{x_i\}\subseteq\Phi(U\cap K)$ and $\{y_i\}\subseteq\Phi(U\cap\zo\times\{N\})$ be sequences which converge to $\Phi(y)$, and they are such that: (a) $x_i\neq y_i$ for all $i\in\mathbb{N}$, (b) the tangent spaces $\tau_i=T\big(\Phi(U\cap K)\big)_{x_i}$ converge to a plane $\tau\subseteq\ro^4$ (the convergence is in the topology of Grassmannian of 2-planes in $\ro^4$), and (c) the secant lines $l_i=\overline{(x_i,y_i)}$ converge to a line $l\subseteq\ro ^4$. We can assume that $U=V\times W$ for some connected open neighborhoods $V$ of $u$ in $I$ and $W$ of $N$ in $\st$, and let $\Phi(s,x)=(s,\Psi(x))$ for $(s,x)\in V\times W$, where $\Psi$ is the restriction of the stereographic projection from the south pole of $\st$. For $i\in\mathbb{N}$, let 
\begin{equation*}
x_i=\Phi\circ(1\times\Gamma)\circ H(s_i,t_i)=\big(s_i, \Psi\circ\Gamma\circ F(s_i, t_i)\big)=\left(s_i,\,\left\| F(s_i, t_i)\right\|^{-2}F(s_i, t_i)\right)
\end{equation*} 
for some $(s_i, t_i)\in\zo\times\ro$, and let 
\begin{equation*}
y_i=\Phi(u_i, N)=\big(u_i, \Psi(N)\big)=(u_i, 0)
\end{equation*} 
for some $u_i\in\zo$. For $i\in\mathbb{N}$, note that the vector 
\begin{equation*}
\lambda_i=x_i-y_i=\left(s_i-u_i,\,\left\| F(s_i, t_i)\right\|^{-2}F(s_i, t_i)\right)
\end{equation*} 
spans the secant line $l_i$. Also, for $i\in\mathbb{N}$, it is easy to check that the vectors
\begin{align*}
v_i&=\left.\left(1\,,\;\frac{\partial}{\partial s}\Psi\circ\Gamma\circ F(s, t)\right)\right|_{(s_i,t_i)}\\[3pt]
&=\left.\left(1\,,\;\frac{\partial}{\partial s}\Bigg(\frac{F(s, t)}{\left\| F(s, t)\right\|^2}\Bigg)\right)\right|_{(s_i,t_i)}\\[3pt]
&=\left.\left(1\,,\;\frac{\frac{\partial}{\partial s}F(s, t)}{\left\| F(s, t)\right\|^2}-\frac{\frac{\partial}{\partial s}\left\| F(s, t)\right\|^2}{\left\| F(s, t)\right\|^4}F(s, t)\right)\right|_{(s_i,t_i)}\hskip7mm\mbox{and}\\[7pt]
w_i&=\left.\left(0\,,\;\frac{\partial}{\partial t}\Psi\circ\Gamma\circ F(s, t)\right)\right|_{(s_i,t_i)}\\[3pt]
&=\left.\left(0\,,\;\frac{\partial}{\partial t}\Bigg(\frac{F(s, t)}{\left\| F(s, t)\right\|^2}\Bigg)\right)\right|_{(s_i,t_i)}\\[3pt]
&=\left.\left(0\,,\;\frac{\frac{\partial}{\partial t}F(s, t)}{\left\| F(s, t)\right\|^2}-\frac{\frac{\partial}{\partial t}\left\| F(s, t)\right\|^2}{\left\| F(s, t)\right\|^4}F(s, t)\right)\right|_{(s_i,t_i)}
\end{align*}
span the tangent space $\tau_i$. After normalizing the vectors $\lambda_i$, $v_i$ and $w_i$, we get the unit vectors as follows:
\begin{align*}
\lambda_i'&=\frac{\lambda_i}{\left\|\lambda_i\right\|}\\[3pt]
&=\left(\frac{s_i-u_i}{\sqrt{(s_i-u_i)^2+\left\|F(s_i, t_i)\right\|^{-2}}}\,,\;\frac{F(s_i, t_i)}{\left\| F(s_i, t_i)\right\|^2\sqrt{(s_i-u_i)^2+\left\|F(s_i, t_i)\right\|^{-2}}}\right),\\[7pt]
v_i'&=\frac{v_i}{\left\|v_i\right\|}\\[3pt]
&=\left.\frac{\left\| F(s, t)\right\|^2}{\sqrt{\left\| F(s, t)\right\|^4+\left\|\frac{\partial}{\partial s}F(s, t)\right\|^2}}\left(1\,,\;\frac{\frac{\partial}{\partial s}F(s, t)}{\left\| F(s, t)\right\|^2}-\frac{\frac{\partial}{\partial s}\left\| F(s, t)\right\|^2}{\left\| F(s, t)\right\|^4}F(s, t)\right)\right|_{(s_i,t_i)}\\[1pt]
\noindent\mbox{and}\\[1pt]
w_i'&=\frac{w_i}{\left\|w_i\right\|}\\[3pt]
&=\left.\left(0\,,\;\frac{\frac{\partial}{\partial t}F(s, t)}{\left\|\frac{\partial}{\partial t}F(s, t)\right\|}-\frac{2\hp F(s,t)\cdot\frac{\partial}{\partial t}F(s, t)}{\left\| F(s, t)\right\|^2\left\|\frac{\partial}{\partial t}F(s, t)\right\|}F(s, t)\right)\right|_{(s_i,t_i)}\\[3pt]
&=\left.\left(0\,,\;\frac{\frac{\partial}{\partial t}F(s, t)}{\left\|\frac{\partial}{\partial t}F(s, t)\right\|}-\frac{2\cos\theta_i}{\left\| F(s, t)\right\|}F(s, t)\right)\right|_{(s_i, t_i)}\;,
\end{align*} 
where $\theta_i$ is the angle between the vectors $F(s_i,t_i)$ and $\frac{\partial F}{\partial t}(s_i,t_i)$. For $i\in\mathbb{N}$, the vector $\lambda_i'$ forms a basis for the secant line $l_i$, and the vectors $v_i'$ and $w_i'$ together form a basis for the tangent space $\tau_i$. Since $|t_i|\to\infty$ as $i\to\infty$, there exists a subsequence of the sequence $\{t_i\}$ which diverges to $\pm\infty$. We may assume that the sequence $\{t_i\}$ itself diverges to $\pm\infty$. Since the sequences $\left\{\|F(s_i,t_i)\|^{-1}F(s_i,t_i)\right\}$, $\left\{\|\frac{\partial F}{\partial t}(s_i, t_i)\|^{-1}\frac{\partial F}{\partial t}(s_i, t_i)\right\}$ and $\{\lambda_i'\}$ are bounded, they have convergent subsequences. Again, we may assume that these sequences themselves are convergent. It is easy to check that
\begin{equation}\label{eq4.1}
\lim_{i\to\infty}\left[\frac{\frac{\partial}{\partial s}F(s, t)}{\left\| F(s, t)\right\|^2}-\frac{\frac{\partial}{\partial s}{\left\| F(s, t)\right\|^2}}{\left\| F(s, t)\right\|^4}F(s, t)\right]_{(s_i, t_i)}=0.
\end{equation} 
Since the sequence $\{\lambda_i'\}$ is convergent, the sequence $\left\{\|\lambda_i\|^{-1}\|F(s_i, t_i)\|^{-1}\right\}$ (which is the sequence of norms of the second component of $\lambda_i'$) is also convergent. By Proposition \ref{th16}, the sequence $\{\theta_i\}$ converges to $0$ or $\pi$ depending on whether $\{t_i\}$ diverges to $\infty$ or $-\infty$; therefore, we have 
\begin{equation}\label{eq4.2}
\lim_{i\to\infty}\left[\frac{\frac{\partial}{\partial t}F(s, t)}{\left\|\frac{\partial}{\partial t}F(s, t)\right\|\cos\theta_i}-\frac{F(s, t)}{\left\| F(s, t)\right\|}\right]_{(s_i, t_i)}=0.
\end{equation}  
For $i\in\mathbb{N}$, let 
\begin{equation*}
z_i'=\frac{(s_i-u_i)\left\|v_i\right\|}{\left\|\lambda_i\right\|}v_i'-\frac{1}{\left\|\lambda_i\right\|\left\|F(s_i, t_i)\right\|\cos\theta_i}w_i'.
\end{equation*} 
Using Eqs. (\ref{eq4.1}) and (\ref{eq4.2}), we compute the limit of $z_i'$ as follows:
\begin{align*}
\lim_{i\to\infty}z_i'&=\lim_{i\to\infty}\left(\frac{(s_i-u_i)\left\|v_i\right\|}{\left\|\lambda_i\right\|}v_i'-\frac{1}{\left\|\lambda_i\right\|\left\|F(s_i, t_i)\right\|\cos\theta_i}w_i'\right)\\[3pt]
&=\lim_{i\to\infty}\left(\frac{s_i-u_i}{\left\|\lambda_i\right\|}\,,\;\frac{s_i-u_i}{\left\|\lambda_i\right\|}\left[\frac{\frac{\partial}{\partial s}F(s, t)}{\left\| F(s, t)\right\|^2}-\frac{\frac{\partial}{\partial s}{\left\| F(s, t)\right\|^2}}{\left\| F(s, t)\right\|^4}F(s, t)\right]_{(s_i, t_i)}\right.\\[2pt]
&\left.\hskip17mm-\frac{1}{\left\|\lambda_i\right\|\left\|F(s_i, t_i)\right\|}\left[\frac{\frac{\partial}{\partial t}F(s, t)}{\left\|\frac{\partial}{\partial t}F(s, t)\right\|\cos\theta_i}-\frac{2\hp F(s, t)}{\left\| F(s, t)\right\|}\right]_{(s_i, t_i)}\right)\\[3pt]
&=\lim_{i\to\infty}\left(\frac{s_i-u_i}{\left\|\lambda_i\right\|}\,,\;\frac{-1}{\left\|\lambda_i\right\|\left\|F(s_i, t_i)\right\|}\left(\frac{\frac{\partial}{\partial t}F(s, t)}{\left\|\frac{\partial}{\partial t}F(s, t)\right\|\cos\theta_i}-\frac{2\hp F(s, t)}{\left\| F(s, t)\right\|}\right]_{(s_i,t_i)}\right)\\[3pt]
&=\lim_{i\to\infty}\lambda_i'\;.
\end{align*}
This shows that the nonzero vector $\lambda'=\lim_{i\to\infty}\lambda_i'$ (which is same as $\lim_{i\to\infty}z_i'$) lies in both $l$ and $\tau$. Therefore, we have $l\subseteq\tau$. This shows that the pair $(K, \zo\times\{N\})$ satisfies the second Whitney condition.\vskip0.1mm
Now the collection $\mathcal{S}$ becomes a Whitney pre-stratification for $\zo\times\st$ (see \cite[\S~5]{jm}). Let $\pi:\zo\times\st\to\zo$ be the projection onto the first coordinate, then this map is proper. It is easy to check that the map $\pi$, when restricted to any stratum in $\mathcal{S}$ is a submersion. Thus, by Thom's First Isotopy Lemma (see \cite[\S~11]{jm}), the map $\pi$, when restricted to any stratum in $\mathcal{S}$ is a locally trivial fibration and hence a trivial bundle over $I$. In particular $\pi\mid_{\zo\times\st\setminus\bar{K}}:\zo\times\st\setminus\hf\xbar{K}\to\zo$ is a trivial bundle and hence the fibers $\pi^{-1}(0)=\{0\}\times\st\setminus\tilde{\alpha}_0(\so)$ and $\pi^{-1}(1)=\{1\}\times\st\setminus\tilde{\alpha}_1(\so)$ are homeomorphic, where $\tilde{\alpha}_0:\so\to\st$ and $\tilde{\alpha}_1:\so\to\st$ are extensions of $\alpha_0$ and $\alpha_1$ respectively. Since it is known that the complements of the knots in $\st$ are homeomorphic if and only if they are ambient isotopic (see \cite{gl}), the knots $\tilde{\alpha}_0$ and $\tilde{\alpha}_1$ are ambient isotopic. In other words, the knots $\phi=\alpha_0$ and $\psi=\alpha_1$ are topologically equivalent.
\end{proof} 

\begin{remark}\label{rm4}
Note that the converse of Theorem \ref{th13} is false. More precisely, a polynomial knot in $\qd$ given by $t\mapsto\fght$ is topologically equivalent to the polynomial knot $t\mapsto\big(f(t),\hp -g(t),\hp -h(t)\big)$, but they belong to the distinct path components of the space $\qd$.
\end{remark}

\begin{remark}\label{rm5}
For $d\geq3$ and a polynomial knot $t\mapsto\fght$ in $\qd$, there are eight distinct path components of $\qd$ each of which contains exactly one of the knot $t\mapsto\big((e_1 f(t),e_2\hp g(t),e_3\hp h(t)\big)$ for $(e_1,e_2,e_3)$ in $\{-1,1\}^3$. Therefore, the total number of path components of the space $\qd$, for $d\geq3$, are in multiples of eight.
\end{remark}

\begin{remark}\label{rm6}
For $d\geq3$, if there are $n$ distinct knots (up to ambient isotopy and mirror images) which can be represented in $\qd$, then it has at least $8n$ distinct path components.
\end{remark}

It is easy to note that the space $\mathcal{Q}_2$ has exactly four path components. The spaces $\mathcal{Q}_3$ and $\mathcal{Q}_4$ each has exactly eight path components (see \cite[Proposition 3.14 and Theorem 3.22]{mr}). Furthermore, the spaces $\mathcal{Q}_5$, $\mathcal{Q}_6$ and $\mathcal{Q}_7$ have at least $16$, $24$ and $88$ path components respectively (see \cite[\S 3]{mr}).\vskip0.1mm

It is known that if two polynomial knots are topologically equivalent, then they are isotopic by an isotopy of polynomial knots (see \cite{rs1}). In the following theorem, we prove that the converse of this is false. 

\begin{theorem}\label{th15}
Every polynomial knot is isotopic to some trivial polynomial knot by a smooth isotopy of polynomial knots.
\end{theorem}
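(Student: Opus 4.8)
The plan is to connect an arbitrary polynomial knot to the standard unknotted parabola $\phi_0(t)=(0,t,t^2)$ by a smooth isotopy, taking advantage of the fact that an isotopy of polynomial knots is permitted to drop the degree: unlike in the fixed-degree stratum $\qd$, where Theorem \ref{th13} shows the knot type is preserved, letting leading coefficients vanish will allow the knot type to change, and this is what makes trivialisation possible. First I would fix the ambient degree. Given a polynomial knot $\phi$ of degree $d_0$, put $d=d_0+2\ge 3$; since every component of $\phi$ has degree at most $d_0=d-2$, the first component has degree $\le d-2$, the second $\le d-1$ and the third $\le d$, so $\phi\in\od$ with no adjustment at all. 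It then suffices to produce a smooth path, through polynomial knots of degree at most $d$, from $\phi$ to $\phi_0\in\pd$.

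The first move is to enter the open fixed-degree stratum $\qd$ smoothly. By Corollary \ref{th14} the set $\eta(\qd)$ is dense in $\rtd$, and by Corollary \ref{th28} it is semialgebraic, so $\eta(\phi)$ lies in its closure. Applying the curve selection lemma of real semialgebraic geometry (see \cite{mc} and \cite{br}) to $\eta(\qd)$ at the point $\eta(\phi)$ produces a Nash arc $s\mapsto\gamma(s)$ with $\gamma(0)=\eta(\phi)$ and $\gamma((0,1])\subseteq\eta(\qd)$; after a monomial reparametrization $s\mapsto s^N$ this arc is smooth, indeed real analytic, at $s=0$. Transporting it through $\eta^{-1}$, the family $H_s=\eta^{-1}(\gamma(s))$ is a smooth isotopy of polynomial knots that begins at $\phi$ and immediately enters $\qd$; write $\psi=\eta^{-1}(\gamma(1))\in\qd\subseteq\pd$ for its endpoint. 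This is the smooth refinement of Proposition \ref{th9}: the curve selection lemma is precisely what upgrades the merely continuous path produced there to one that is smooth at the boundary point $\phi$.

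Next I would join $\psi$ to $\phi_0$ inside $\pd$ by invoking Proposition \ref{th4}, but entering its argument at the stage where one already possesses an element of $\qd$. From that stage onward the construction there uses only the openness of $\qd$ (Corollary \ref{th11}) together with homotopies whose coefficients are explicit polynomials in the parameter $s$; hence each sub-path may be chosen smooth, and their concatenation is a smooth path in $\pd$ from $\psi$ to $\phi_0=(0,t,t^2)$. Since $\phi_0$ lies in the plane $x=0$, it is an unknot, that is, a trivial polynomial knot.

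Finally, concatenate the two smooth isotopies, from $\phi$ to $\psi$ and from $\psi$ to $\phi_0$, reparametrizing near the junction value of the parameter so that all derivatives in $s$ vanish there; the result is a single smooth isotopy of polynomial knots carrying $\phi$ to the trivial knot $\phi_0$. I expect the combinatorial routing to be routine; the genuine obstacle is smoothness. Naively pushing $\phi$ into the fixed-degree stratum by adding small top-degree terms would destroy the embedding property along the way, so the delicate step is securing a smooth, rather than merely continuous, passage from $\phi$ into the open dense set $\qd$, and this is exactly the role played by the curve selection lemma. The conceptual heart, already signalled by Remark \ref{rm4}, is that the vanishing of leading coefficients during the isotopy is what permits the knot type to be undone, in contrast to the degree-preserving situation of Theorem \ref{th13}.
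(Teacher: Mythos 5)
Your argument is correct in outline, but it takes a genuinely different---and much heavier---route than the paper's. The paper proves Theorem \ref{th15} with one explicit formula: writing $\phi(t)=\sum_{i=0}^{d}(a_i,b_i,c_i)\,t^i$, it sets $F_s(t)=(a_0,b_0,c_0)\,s+(a_1,b_1,c_1)\,t+\sum_{i=2}^{d}(a_i,b_i,c_i)\,s^{i-1}t^i$. This is a polynomial in $(s,t)$, so smoothness is automatic; for $s\neq0$ one computes $F_s'(t)=\phi'(st)$ and $F_s(u)-F_s(v)=\big(\phi(su)-\phi(sv)\big)/s$, so each $F_s$ is an embedding, while $F_0(t)=(a_1t,b_1t,c_1t)$ is a straight line, an embedding because $\phi'(0)\neq0$. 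Hence $F$ is a smooth polynomial isotopy from a trivial knot to $\phi$, with the degree dropping only at the single instant $s=0$: the knotting is pushed off to infinity, and there is no need ever to enter $\qd$, to invoke semialgebraic machinery, or to smooth junctions. Your route---Nash curve selection to enter $\qd$ smoothly from $\eta(\phi)$, a smooth re-run of the Proposition \ref{th4} argument (its openness-based steps taken as straight segments inside balls contained in $\qd$, its remaining homotopies already polynomial in $s$), and flattening of all $s$-derivatives at the junctions---does go through, and it buys something the paper does not state: a smooth upgrade of the purely topological path statements (Propositions \ref{th9} and \ref{th4}), landing on the canonical unknot $t\mapsto(0,t,t^2)$. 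But for the theorem as stated (``some trivial polynomial knot'') this is more than is required, it imports the curve selection lemma which the paper never needs, and your diagnosis that the essential difficulty is a smooth entry into $\qd$ is an artifact of your route: the paper's scaling isotopy dissolves that difficulty by never insisting on a fixed degree sequence, and the same scaling trick is then reused to prove Theorem \ref{th3} and Corollary \ref{th22}.
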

\begin{proof}
Let a polynomial knot  $\phi:\ro\to\rt$ be given, and let it be given by
\begin{equation*}
\phi(t)=\left(a_0+a_1t+\cdots+a_dt^d,\,b_0+b_1t+\cdots+b_dt^d,\,c_0+c_1t+\cdots+c_dt^d\right)
\end{equation*} 
for $t\in\ro$, where $d$ is the degree of $\phi$. For any but fixed $\epsilon>0$ and for $s\in(-\epsilon, 1+\epsilon)$, define a map $F_s:\ro\to\rt$ by 
\begin{equation*}
F_s(t)=(a_0, b_0, c_0)\hp s+(a_1, b_1, c_1)\hp t+\sum_{i=2}^d\ho(a_i, b_i, c_i)\hp s^{i-1}\hp t^i
\end{equation*} 
for $t\in\ro$. Let $F:(-\epsilon,1+\epsilon)\times\ro\to\rt$ be a map given by $F(s,t)=F_s(t)$ for $(s,t)\in(-\epsilon,1+\epsilon)\times\ro$. Note that $\phi$ is a smooth embedding, so for $s\in(-\epsilon,1+\epsilon)\setminus\{0\}$ and $t,u,v\in\ro$ with $u\neq v$, we have
\begin{equation*}
F_s'(t)=\phi'(s\hp t)\neq0 \quad\mbox{and}\quad F_s(u)-F_s(v)=\frac{\phi(s \hp u)-\phi(s\hp v)}{s}\neq0\hf.
\end{equation*} 
This shows that $F_s$ is a smooth embedding (a polynomial knot) for all $s\in(-\epsilon,1+\epsilon)\setminus\{0\}$. Let $\psi:\ro\to\rt$ be a map given by $t\mapsto(a_1t,\,b_1t,\,c_1t)$. Since at least one of $a_1, b_1$ or $c_1$ is nonzero (otherwise $\phi'(0)=0$), the map $\psi$ is a smooth embedding (a polynomial knot). Note that $F(0,t)=\psi(t)$ and $F(1,t)=\phi(t)$ for all $t\in\ro$. Also, it is easy to see that $F$ is smooth, since its components are polynomials in two variables. Thus, the map $F$ is a smooth isotopy of polynomial knots connecting $\phi$ and $\psi$. 
\end{proof}

\begin{corollary}\label{th22}
Every polynomial knot is connected to a trivial polynomial knot by a smooth path in the space $\spk$ of all polynomial knots.
\end{corollary}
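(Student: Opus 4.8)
The plan is to promote the explicit smooth polynomial isotopy produced in Theorem \ref{th15} to a smooth path in $\spk$, the only genuine work being to verify that this isotopy stays inside a single stratum $\od$, so that the inductive limit topology on $\spk$ causes no trouble. First I would fix a polynomial knot $\phi$ of degree $D$ and apply Theorem \ref{th15} to obtain a smooth polynomial isotopy $F:(-\epsilon,1+\epsilon)\times\ro\to\rt$ with $F_0=\sigma$ and $F_1=\phi$, where $\sigma(t)=(a_1t,\,b_1t,\,c_1t)$ is a linear, hence trivial, polynomial knot, and each $F_s$ is a polynomial embedding.

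The key observation is a degree bound. For every $s$ the map $F_s(t)=(a_0,b_0,c_0)\hp s+(a_1,b_1,c_1)\hp t+\sum_{i=2}^{D}(a_i,b_i,c_i)\hp s^{i-1}t^i$ has degree at most $D$ in $t$, so each of its three component polynomials has degree $\le D\le (D+2)-2$. In particular $F_s$ satisfies the degree constraints defining $\mathcal{O}_{D+2}$, whence $F_s\in\mathcal{O}_{D+2}$ for every $s\in\zo$. Thus the restriction of $F$ to $\zo\times\ro$ is a smooth isotopy of polynomial knots lying entirely in the single stratum $\mathcal{O}_{D+2}$. By Corollary \ref{th6} (the $\od$ analogue of Proposition \ref{th17}) this isotopy corresponds to a smooth path $s\mapsto F_s$ in $\mathcal{O}_{D+2}$; concretely, under the identifying homeomorphism $\eta$ the coefficients of $F_s$ are the polynomial functions $s\mapsto(a_0s,b_0s,c_0s)$, $s\mapsto(a_1,b_1,c_1)$ and $s\mapsto(a_is^{i-1},b_is^{i-1},c_is^{i-1})$ of the parameter, so $\eta\circ F$ traces a smooth curve in $\mathbb{R}^{3(D+2)}$. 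Since $\spk=\bigcup_{d\geq2}\od$ carries the inductive limit topology, the inclusion $\mathcal{O}_{D+2}\hookrightarrow\spk$ is continuous, and composing gives a path in $\spk$ from the trivial knot $\sigma$ to $\phi$ that factors smoothly through $\mathcal{O}_{D+2}$.

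The one point deserving care, and which I regard as the crux, is precisely this confinement to a single stratum: because $\spk$ is only an inductive limit, a path into it should be deemed smooth exactly when it factors smoothly through some $\od$, and an isotopy whose degree fluctuated without bound would fail to do so. The isotopy of Theorem \ref{th15} sidesteps this issue because every $F_s$ has degree bounded by $D=\deg\phi$, placing the whole path inside $\mathcal{O}_{D+2}$; checking this degree bound, together with noting that $\sigma$ is a bona fide trivial polynomial knot, is all that is required to conclude.
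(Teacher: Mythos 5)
Your proposal is correct and takes essentially the same route as the paper: the paper's proof also simply reuses the isotopy $F_s$ from Theorem \ref{th15} and declares $s\mapsto F_s$ to be the desired smooth path in $\spk$. Your extra step of checking that the whole path lies in the single stratum $\mathcal{O}_{D+2}$ (so that smoothness makes sense in the inductive limit topology) is a detail the paper leaves implicit, not a departure from its argument.
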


\begin{proof}
For a given polynomial knot $\phi$, a map $\alpha:\zo\to\spk$ given by $s\mapsto F_s$ (where $F_s$, for $s\in\zo$, is defined as in the proof of Theorem \ref{th15}) is a smooth path in $\spk$ connecting $\phi$ to a trivial polynomial knot.
\end{proof}
%%%%%%%%%%%%%%%%%%%%%%%%%%%%%%%%%%%%%%%%%%%%%%%%%%%%%%%%%%%%%%%%%%%%%%%%%%%%%%%%%%%%%%%

\subsection{Homotopy types of the spaces}\label{sec4.2}

\begin{theorem}\label{th2}
The space $\mathcal{O}_2$ has the same homotopy type as $\so$.
\end{theorem}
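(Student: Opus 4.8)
The plan is to identify $\mathcal{O}_2$, through the homeomorphism $\eta$ of \S\ref{sec3}, with an explicit semialgebraic subset of $\mathbb{R}^6$, and then to write down a single linear strong deformation retraction of that subset onto a subspace homeomorphic to $\rw\setminus\{(0,0)\}$, which has the homotopy type of $\so$.

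First I would write a general element of $\mathcal{O}_2$ as $\phi(t)=\big(a_0,\,b_0+b_1t,\,c_0+c_1t+c_2t^2\big)$ and decide exactly when it is an embedding. Applying Lemma \ref{th8} with $d=2$, the associated polynomials are $F\equiv0$, $G(s,t)=b_1$ and $H(s,t)=c_1+c_2(s+t)$, so $\phi$ is an embedding if and only if $G$ and $H$ have no common zero. If $b_1\neq0$ then $G$ never vanishes and $\phi$ is automatically an embedding; if $b_1=0$ then one needs $H=c_1+c_2(s+t)$ to be nowhere zero, which forces $c_2=0$ and $c_1\neq0$. Hence, writing points of $\mathbb{R}^6$ as $(a_0,b_0,b_1,c_0,c_1,c_2)$,
\[
\eta(\mathcal{O}_2)=\big\{(a_0,b_0,b_1,c_0,c_1,c_2):\ b_1\neq0\big\}\cup\big\{(a_0,b_0,0,c_0,c_1,0):\ c_1\neq0\big\}.
\]

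The coordinates $a_0,b_0,c_0$ are completely unconstrained, and $c_2$ is constrained only on the locus $b_1=0$ (where it must vanish), so I would scale all four of these coordinates to $0$ at once. Concretely, define $L:\eta(\mathcal{O}_2)\times\zo\to\mathbb{R}^6$ by
\[
L\big((a_0,b_0,b_1,c_0,c_1,c_2),u\big)=\big((1-u)a_0,\,(1-u)b_0,\,b_1,\,(1-u)c_0,\,c_1,\,(1-u)c_2\big).
\]
This $L$ is continuous, being the restriction of a single map that is linear in $u$. The crucial check is that it stays inside $\eta(\mathcal{O}_2)$: when $b_1\neq0$ this is clear since the $b_1$-coordinate is left untouched, and when $b_1=0$ the point already satisfies $c_2=0$ and $c_1\neq0$, both of which are preserved by $L$. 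Thus $L$ is a strong deformation retraction onto $Z=\big\{(0,0,b_1,0,c_1,0):(b_1,c_1)\neq(0,0)\big\}$; the image is exactly $Z$ because the surviving pairs $(b_1,c_1)$ are precisely those with $(b_1,c_1)\neq(0,0)$, and every point of $Z$ is fixed throughout the homotopy.

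Finally $Z$ is homeomorphic to $\rw\setminus\{(0,0)\}$, which has the homotopy type of $\so$, and since $\eta$ is a homeomorphism we conclude $\mathcal{O}_2\simeq\so$. I expect the only genuine content to be the first step, namely pinning down the embedding locus and recognizing that the defining conditions split into the two cases above. Once that is established the retraction is essentially forced, and its verification reduces to the short two-case argument just described; because $L$ is a plain linear contraction handling each case separately, no delicate point-set difficulties arise despite $\eta(\mathcal{O}_2)$ being neither open nor closed.
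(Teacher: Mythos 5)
Your proof is correct and is essentially the same as the paper's: the paper likewise builds a linear strong deformation retraction of $\mathcal{O}_2$ onto the family of linear knots $t\mapsto(0,b_{1}t,c_{1}t)$ by a straight-line homotopy of coefficients, verified separately on the same two strata ($b_1\neq0$ versus $b_1=0$ with degree sequence $(0,0,1)$). The only cosmetic differences are that the paper normalizes $(b_1,c_1)$ so its homotopy lands exactly on the circle of unit linear knots instead of on your copy of $\rw\setminus\{(0,0)\}$, and it asserts the two-stratum decomposition of $\mathcal{O}_2$ outright, whereas you justify it by the explicit computation of the embedding locus via Lemma~\ref{th8}.
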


\begin{proof}
For $\phi\in\mathcal{O}_2$, since $\phi'(0)\neq0$, we must have $(b_{\phi1})^2+(c_{\phi1})^2\neq0$. Define maps $\Psi:\mathcal{O}_2\to\so$ and $\Omega:\so\to\mathcal{O}_2$ by
\begin{equation*}
\Psi(\phi)=\frac{1}{\sqrt{(b_{\phi1})^2+(c_{\phi1})^2}}\left(b_{\phi1},\hp c_{\phi1}\right)
\end{equation*}
for $\phi\in\mathcal{O}_2$ and
$\Omega(x)=\phi_x$ for $x=(x_1, x_2)\in\so$, where $\phi_x(t)=(0,x_1\hp t,x_2\hp t)$ for $t\in\ro$. The maps $\Psi$ and $\Omega$ are continuous. Let $F:\zo\times\mathcal{O}_2\to\mathcal{A}_2$ be a map given by $F(s,\phi)=F_{s\phi}$ for $(s,\phi)\in\zo\times\mathcal{O}_2$, where 
\begin{equation*}
F_{s\phi}(t)=(1-s)\phi(t)+\frac{s}{\sqrt{(b_{\phi1})^2+(c_{\phi1})^2}}\left(0,\,b_{\phi1}t,\,c_{\phi1}t\right)
\end{equation*}
for $t\in\ro$. Note that the map $F$ is continuous. The space $\mathcal{O}_2$ is the disjoint union of the following sets:
\begin{align*}
\mathcal{O}_{21}&=\big\{\phi\in\mathcal{A}_2\mid\phi\hskip1.5mm\mbox{has degree sequence}\hskip1.5mm(0,0,1)\big\}\hskip2.5mm\mbox{and}\\
\mathcal{O}_{22}&=\big\{\phi\in\mathcal{A}_2\mid\ho\mbox{second component of}\hskip1.5mm\phi\hskip1.5mm\mbox{is linear}\big\}.
\end{align*} 
One can check that $F(s,\phi)\in\mathcal{O}_{21}$ for all $(s,\phi)\in\zo\times\mathcal{O}_{21}$ and $F(s,\psi)\in\mathcal{O}_{22}$ for all $(s,\psi)\in\zo\times\mathcal{O}_{22}$. This shows that $F$ maps $\zo\times\mathcal{O}_2$ into the space $\mathcal{O}_2$. Note that $F(0,\phi)=\phi$ and $F(1,\phi)=\Omega\big(\Psi(\phi)\big)$ for all $\phi\in\mathcal{O}_2$. This shows that $\Omega\circ\Psi$ is homotopic to the identity map of $\mathcal{O}_2$. Also, it is easy to see that the map $\Psi\circ\Omega$ is the identity map of $\so$.
\end{proof}

\begin{theorem}\label{th3}
The space $\od$, for $d\geq3$, has the same homotopy type as $\sw$.
\end{theorem}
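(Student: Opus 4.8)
The plan is to exhibit a strong deformation retraction of $\od$ onto the subspace $L$ of \emph{linear} (trivial) polynomial knots $t\mapsto(a_1 t,\,b_1 t,\,c_1 t)$ with $(a_1,b_1,c_1)\in\ro^3\setminus\{0\}$, and then to identify $L$ with $\ro^3\setminus\{0\}$, which has the homotopy type of $\sw$. This parallels Theorem \ref{th2}: there the linear part of a knot in $\mathcal{O}_2$ lives in the $(g,h)$-plane (since $\deg f\le0$ forces $a_1=0$) and produces $\so$, whereas for $d\ge3$ the constraint $\deg f\le d-2$ with $d-2\ge1$ leaves $a_1$ free, so the space of linear parts is genuinely three-dimensional and yields $\sw$. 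Path connectedness of $\od$ for $d\ge3$ is then a byproduct.

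First I would recycle the isotopy constructed in Theorem \ref{th15}. For $\phi\in\od$ with $\phi(t)=\sum_{i=0}^d(a_{i\phi},b_{i\phi},c_{i\phi})\,t^i$, define $H:\zo\times\od\to\ad$ by $H(s,\phi)=F^\phi_{1-s}$, where $F^\phi_r(t)=(a_{0\phi},b_{0\phi},c_{0\phi})\,r+(a_{1\phi},b_{1\phi},c_{1\phi})\,t+\sum_{i=2}^d(a_{i\phi},b_{i\phi},c_{i\phi})\,r^{i-1}t^i$. Then $H(0,\phi)=\phi$ and $H(1,\phi)=\sigma_\phi$, where $\sigma_\phi(t)=(a_{1\phi}\,t,\,b_{1\phi}\,t,\,c_{1\phi}\,t)$. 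The essential point, established exactly as in Theorem \ref{th15}, is that $F^\phi_r$ is an embedding for every $r\ne0$ (it equals $\frac1r\phi(r\,\cdot)$ up to the translation by $(a_{0\phi},b_{0\phi},c_{0\phi})(r-\frac1r)$, hence is an embedding by Remarks \ref{rm2} and \ref{rm3}), while $F^\phi_0=\sigma_\phi$ is an embedding because $\phi'(0)=(a_{1\phi},b_{1\phi},c_{1\phi})\ne0$. Since $\phi\in\od$, every $F^\phi_r$ obeys the degree bounds $\deg f\le d-2,\ \deg g\le d-1,\ \deg h\le d$, so $H$ maps into $\od$; continuity in $(s,\phi)$ is immediate because the coefficients are polynomial in $s$ and in the coefficients of $\phi$, and $\od$ carries the subspace topology from $\rtd$. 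Finally $H(s,\phi)=\sigma_\phi$ for all $s$ whenever $\phi\in L$, so $H$ is a strong deformation retraction onto $L$.

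It then remains to compute the homotopy type of $L$. The assignment $\sigma_\phi\mapsto\phi'(0)=(a_{1\phi},b_{1\phi},c_{1\phi})$ is a homeomorphism of $L$ onto $\ro^3\setminus\{0\}$: it is a restriction of the linear coordinate map $\eta$, its image is \emph{all} of $\ro^3\setminus\{0\}$ since every $v\ne0$ gives a linear embedding $\sigma_v\in\od$ (here $d\ge3$ is exactly what lets $\deg f\le d-2$ permit $a_1\ne0$), and its inverse $v\mapsto\sigma_v$ is continuous. As $\ro^3\setminus\{0\}$ deformation retracts radially onto $\sw$, we conclude $\od\simeq L\cong\ro^3\setminus\{0\}\simeq\sw$.

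The one thing that must be gotten right is the verification that the retracting homotopy never leaves $\od$, that is, that $F^\phi_r$ remains a genuine embedding for all $r$ and all $\phi$; this is precisely the content borrowed from Theorem \ref{th15}, combined with the observation that the defining degree inequalities of $\od$ are preserved because they already hold for $\phi$. Everything else is formal. If desired, the argument can be packaged in the style of Theorem \ref{th2} by writing explicit maps $\Psi:\od\to\sw$, $\phi\mapsto\phi'(0)/\|\phi'(0)\|$, and $\Omega:\sw\to\od$, $x\mapsto\sigma_x$, with $\Psi\circ\Omega=\mathrm{id}_{\sw}$ and $\Omega\circ\Psi\simeq\mathrm{id}_{\od}$ realized by $H$ followed by the radial normalization $\sigma_v\mapsto\sigma_{v/\|v\|}$ inside $L$.
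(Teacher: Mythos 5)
Your proof is correct and takes essentially the same approach as the paper: the paper deforms $\od$ by the very same coefficient-scaling homotopy borrowed from Theorem \ref{th15}, merely folding in a normalizing factor $\mu(s)$ so that the retraction lands directly on the unit linear knots $\Upsilon_d(\sw)\cong\sw$ rather than on $L\cong\ro^3\setminus\{0\}$ followed by the radial retraction. The difference is purely cosmetic.
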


\begin{proof}
For $\phi\in\od$, since $\phi'(0)=0$, we have $(a_{\phi1})^2+(b_{\phi1})^2+(c_{\phi1})^2\neq0$. Let $\Gamma_d:\mathcal{O}_d\to\sw$ and $\Upsilon_d:\sw\to\od$ be maps given by
\begin{equation*}
\Gamma_d(\phi)=\frac{1}{\sqrt{(a_{\phi1})^2+(b_{\phi1})^2+(c_{\phi1})^2}}\left(a_{\phi1}\hp, b_{\phi1}\hp, c_{\phi1}\right)
\end{equation*}
for $\phi\in\od$ and $\Upsilon_d(x)=\phi_x$ for $x=(x_1,x_2,x_3)\in\sw$, where $\phi_x(t)=(x_1\hp t, x_2\hp t,x_3\hp t)$ for $t\in\ro$. The maps $\Gamma_d$ and $\Upsilon_d$ are continuous. Let $\mu:\zo\to\rt$ be a map given by 
\begin{equation*} 
\mu(s)= s+\frac{1-s}{\sqrt{(a_{\phi1})^2+(b_{\phi1})^2+(c_{\phi1})^2}}\,
\end{equation*}  
for $s\in\zo$. Let $H_d:\zo\times\od\to\ad$ be a map given by $H_d(s,\phi)=H_{s\phi}$ for $(s,\phi)\in\zo\times\od$, where
\begin{equation*} 
H_{s\phi}(t)=(a_{\phi0}, b_{\phi0}, c_{\phi0})\hp s+\mu(s)\left((a_{\phi1}, b_{\phi1}, c_{\phi1})\hp t+\sum_{i=2}^d\ho(a_{\phi i}, b_{\phi i}, c_{\phi i})\hp s^{i-1}\hp t^i\right)
\end{equation*}
for $t\in\ro$. Note that $H_d$ is a continuous map, and $H_d(0,\phi)=\Upsilon_d\big(\Gamma_d(\phi)\big)$ and $H_d(1,\phi)=\phi$ for all $\phi\in\od$. For $s\in\ozo$, $\phi\in\od$ and $u,v,t\in\ro$ with $u\neq v$, we have $H_{s\phi}'(t)=\mu(s)\hp\phi'(s\hp t)\neq0$ and
\begin{equation*}
H_{s\phi}(u)-H_{s\phi}(v)=\frac{\mu(s)\big(\phi(s \hp u)-\phi(s\hp v)\big)}{s}\neq0.
\end{equation*}
This shows that the map $H_{s\phi}$ is a smooth embedding for all $(s,\phi)\in\zo\times\od$, and hence the image of $H_d$ is contained in $\od$.  This proves that $\Upsilon_d\circ\Gamma_d$ is homotopic to the identity map of $\od$. Also, note that the map $\Gamma_d\circ\Upsilon_d$ is the identity map of $\sw$.
\end{proof}

\begin{corollary}\label{th19}
The space $\od$, for $d\geq2$, is path connected.
\end{corollary}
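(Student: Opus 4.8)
The plan is to deduce this immediately from the two homotopy-type computations that precede it, namely Theorem \ref{th2} and Theorem \ref{th3}. The key observation is that path-connectedness is a homotopy invariant: if two spaces are homotopy equivalent, their sets of path components are in bijection, so one is path connected if and only if the other is. Since both model spaces appearing in those theorems are path connected, the conclusion will follow by splitting into the two cases $d=2$ and $d\geq3$.

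First I would handle the base case $d=2$. By Theorem \ref{th2}, the space $\mathcal{O}_2$ has the homotopy type of $\so$. The circle $\so$ is path connected, and because a homotopy equivalence $\Psi:\mathcal{O}_2\to\so$ (with homotopy inverse $\Omega$, as constructed in the proof of Theorem \ref{th2}) induces a bijection between the path components of $\mathcal{O}_2$ and those of $\so$, the space $\mathcal{O}_2$ has exactly one path component. Hence $\mathcal{O}_2$ is path connected.

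Next I would treat the case $d\geq3$. By Theorem \ref{th3}, the space $\od$ has the homotopy type of $\sw$. The sphere $\sw$ is path connected, and again the homotopy equivalence $\Gamma_d:\od\to\sw$ furnished in the proof of Theorem \ref{th3} puts the path components of $\od$ in bijection with those of $\sw$. Therefore $\od$ has a single path component and is path connected.

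There is essentially no technical obstacle here, since all the genuine work has already been carried out in establishing the homotopy types; the only point worth stating carefully is the invariance of the number of path components under homotopy equivalence, which one can record by noting that the maps $\Upsilon_d\circ\Gamma_d\simeq\mathrm{id}_{\od}$ and $\Gamma_d\circ\Upsilon_d=\mathrm{id}_{\sw}$ (and similarly for $\mathcal{O}_2$) carry paths to paths and hence identify path components. Combining the two cases gives that $\od$ is path connected for every $d\geq2$.
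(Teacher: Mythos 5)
Your proof is correct and is exactly the argument the paper intends: the corollary is stated without proof precisely because it follows from Theorem \ref{th2} (for $d=2$) and Theorem \ref{th3} (for $d\geq3$) together with the standard fact that a homotopy equivalence induces a bijection on path components, and $\so$ and $\sw$ are path connected. Your explicit handling of the two cases and of the homotopy invariance of path components fills in the same (routine) details the paper leaves implicit.
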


\begin{corollary}\label{th20}
The space $\spk$ of all polynomial knots has the same homotopy type as $\sw$.
\end{corollary}

\begin{proof}
Since $\spk=\mcup_{d\geq3}\od\,$, we define maps $\Gamma:\spk\to\sw$,\, $\Upsilon:\sw\to\spk$ and $H:\zo\times\spk\to\spk$ given by $\Gamma(\phi)=\Gamma_d(\phi)$ for $\phi\in\od$,  
$\Upsilon(x)=\Upsilon_3(x)$ for $x\in\sw$ and $H(s,\psi)=H_d(s,\psi)$ for $(s,\psi)\in\zo\times\od$ (where the maps $\Gamma_d,\, \Upsilon_3$ and $H_d$, for  $d\geq3$, are defined as in the proof of Theorem \ref{th3}). It is easy to check that the maps $\Gamma$ and $H$ are well defined; that is, for $n>d\geq 3$, $\Gamma_n(\phi)=\Gamma_d(\phi)$ for all $\phi\in\od$ and $H_n(s,\psi)=H_d(s,\psi)$ for all $(s,\psi)\in\zo\times\od$.\vskip0.1mm 
To prove the continuity of the maps $\Gamma,\, \Upsilon$ and $H$, let us consider an open set $U$ in $\sw$ and an open set $V$ in $\spk$. It is easy to check that
\begin{align}
&\Gamma^{-1}(U)\cap\od=\Gamma_d^{-1}(U),\label{eq4.5}\\
&\Upsilon^{-1}(V)=\Upsilon_3^{-1}(V\cap\mathcal{O}_3)\quad\mbox{and}\label{eq4.6}\\ &H^{-1}(V)\cap\zo\times\od=H_d^{-1}(V\cap\od)\label{eq4.7}
\end{align} 
for all $d\geq3$. Note that $\Gamma_d^{-1}(U)$ is open in $\od$ for all $d\geq3$ and $\Upsilon_3^{-1}(V\cap\mathcal{O}_3)$ is open in $\sw$, so by Eqs. \ref{eq4.5} and \ref{eq4.6}, $\Gamma^{-1}(U)$ is open in $\spk$ and $\Upsilon^{-1}(V)$ is open in $\sw$. This proves the continuity of the maps $\Gamma$ and $\Upsilon$. Also, note that $H_d^{-1}(V\cap\od)$ is open in $\zo\times\od$ for all $d\geq3$, so by Eq. \ref{eq4.7}, the set $H^{-1}(V)$ is open in $\zo\times\spk$ with respect to the inductive limit topology which comes from the stratification $\zo\times\spk=\mcup_{d\geq3}\hf\zo\times\od$. Since $\zo$ is compact and regular, the inductive limit typology on $\zo\times\spk$ is same as the product topology on it (see \cite[\S 18.5]{jhcw} and \cite[\S 8]{imj}). Thus, the set $H^{-1}(V)$ is open in $\zo\times\spk$ with respect to its product topology. This proves that the map $H$ is continuous.\vskip0.1mm
Since $H_d(0,\phi)=\Upsilon_3\big(\Gamma_d(\phi)\big)$ and $H_d(1,\phi)=\phi$ for all $d\geq3$ and for all $\phi\in\od$, we have $H(0,\psi)=\Upsilon\big(\Gamma(\psi)\big)$ and $H(1,\psi)=\psi$ for all $\psi\in\spk$. This shows that the map $\Upsilon\circ\Gamma$ is homotopic to the identity map of $\spk$. Also, it is easy to note that the map $\Gamma\circ\Upsilon$  which is same as $\Gamma_3\circ\Upsilon_3$ is the identity map of $\sw$.
\end{proof}

\begin{corollary}\label{th30}
The space $\spk$ is path connected.
\end{corollary}

We have noted in Theorem \ref{th13} that the space $\qd,$ for $d\geq 2,$ is not path connected. It is easy to see that the path components of the spaces $\mathcal{Q}_2$ and $\mathcal{Q}_3$ are contractible. In general it becomes difficult to make any inference about the path components of $\qd$ for higher values of $d$. Here, we discuss the homotopy types of the path components of the spaces $\mathcal{Q}_4$ and $\mathcal{Q}_5$. First we prove that every path component of the space $\mathcal{Q}_4$ is contractible by proving that a one point subspace is a {\it weak deformation retract} in the sense of the following definition. 

\begin{definition}
We say that a subset $A$ of a topological space $X$ is a weak deformation retract if there is a continuous map $F:\zo\times X\to X$ such that $F(0,x)=x$ and $F(1,x)\in A$ for all $x\in X$. The map $F$ is called a weak deformation retraction.
\end{definition}

\begin{remark}
A topological space $X$ is contractible if and only if a one point subspace is a weak deformation retract of it. Furthermore, a weak deformation retract of a topological space is homotopy equivalent to it.
\end{remark}

\begin{lemma}\label{th5.6}
Let $A$ be a weak deformation retract of a space $X$. Then a weak deformation retract of $A$ is also a weak deformation retract of $X$. 
\end{lemma}

\begin{remark}\label{rm5.1}
Lemma \ref{th5.6} says that, being a weak deformation retract is a transitive relation. In particular, if a contractible subspace is a weak deformation retract of a space $X$, then the space $X$ is contractible.
\end{remark}

\begin{theorem}\label{th5.13}
The path components of the space $\mathcal{Q}_4$ are contractible.
\end{theorem}

\begin{proof}
By Theorem 3.22 in \cite{mr}, the space $\mathcal{Q}_4$ has exactly eight path components which are given by
\begin{equation*}
\mathcal{Q}_{4e}=\big\{\hs \phi\in\mathcal{Q}_4\mid\,e_1a_{\phi2}>0,\, e_2b_{\phi3}>0\ho\mbox{and}\ho e_3c_{\phi4}>0\hs\big\}\hp,
\end{equation*} 
where $e=(e_1,e_2,e_3)\in\cmoo^3$. In order to prove the theorem, we have to prove that the space $\mathcal{Q}_{4e}$, for $e\in\cmoo^3$, is contractible. It can be easily seen that the set 
$$\mathcal{L}_{4e}=\big\{\hs\phi\in\mathcal{Q}_{4e}\mid a_{\phi0}=b_{\phi0}=c_{\phi0}=0\hs\big\}$$
is a weak deformation retract of the space $\mathcal{Q}_{4e}$. For $\phi\in\mathcal{L}_{4e}$, let $f_\phi, g_\phi$ and $h_\phi$ be the first, second and third components of $\phi$ respectively, and let $F:\zo\times\mathcal{L}_{4e}\to\mathcal{L}_{4e}$ be a map given by $(s,\phi)\mapsto F_{s\phi}$, where
{\small\begin{equation*}
F_{s\phi}(t)=\left(f_\phi(t),\hs g_\phi(t)-\frac{b_{\phi2}}{a_{\phi2}}s\hf f_\phi(t),\hs h_\phi(t)+\frac{b_{\phi2}c_{\phi3}-b_{\phi3}c_{\phi2}}{a_{\phi2}b_{\phi3}}s\hf f_\phi(t)-\frac{c_{\phi3}}{b_{\phi3}}s\hf g_\phi(t)\right)
\end{equation*}}\vskip0.1mm
\noindent for $t\in\ro$. The map $F$ is continuous. Let us consider a set
\begin{equation*}
\mathcal{M}_{4e}=\left\{\hs\phi\in\mathcal{Q}_{4e}\mid t\xrightarrow{\phi}(\hp a_{\phi2}t^2+a_{\phi1}t,\hp b_{\phi3}t^3+b_{\phi1}t,\hp c_{\phi4}t^4+c_{\phi1}t\hp)\hs\right\}\hp.
\end{equation*}
\noindent One can see that $F(0,\phi)=\phi$ and $F(1,\phi)\in\mathcal{M}_{4e}$ for all $\phi\in\mathcal{L}_{4e}$. Thus, the space $\mathcal{M}_{4e}$ is a weak deformation retract of the space $\mathcal{L}_{4e}$. Let $H:\zo\times\mathcal{M}_{4e}\to\mathcal{M}_{4e}$ be a map which is given by $(s,\phi)\mapsto H_{s\phi}$, where
{\small\begin{equation*}
H_{s\phi}(t)=\left(\hs\bigg( 1-s+\frac{1}{\lvert a_{\phi2}\rvert}s\bigg)f_\phi(t),\hs \bigg( 1-s+\frac{1}{\lvert b_{\phi3}\rvert}s\bigg) g_\phi(t),\hs\bigg( 1-s+\frac{1}{\lvert c_{\phi4}\rvert}s\bigg) h_\phi(t)\hs\right)
\end{equation*}}\vskip0.1mm
\noindent for $t\in\ro$. Note that the map $H$ is continuous. Consider the following set:
\begin{equation*}
\mathcal{N}_{4e}=\big\{\hs \phi\in\mathcal{Q}_4\mid t\xrightarrow{\phi}(\hp e_1\hf t^2+a_\phi t,\hp e_2\hf t^3+b_\phi t,\hp e_3\hf t^4+c_\phi t\hp)\hs\big\}.
\end{equation*}
\noindent It is clear that $H(0,\phi)=\phi$ and $H(1,\phi)\in\mathcal{N}_{4e}$ for all $\phi\in\mathcal{M}_{4e}$. Therefore, $\mathcal{N}_{4e}$ is a weak deformation retract of the space $\mathcal{M}_{4e}$. Let us define a map $\Gamma:\zo\times\mathcal{N}_{4e}\to\mathcal{N}_{4e}$ by $(s,\phi)\mapsto\Gamma_{s\phi}$, where 
{\small\begin{equation*}
\Gamma_{s\phi}(t)=\Big(e_1t^2+a_\phi\hp t,e_2t^3+(b_\phi+e_2s\left| b_\phi\right| +e_2s)\hf t, e_3t^4+(c_\phi-2e_1e_3s a_\phi\left| b_\phi\right|-2e_1e_3s a_\phi)\hf t\Big)
\end{equation*}}\vskip0.1mm
\noindent for $t\in\ro$. The map $\Gamma$ is continuous. By Corollary 3.16 (see also Proposition 3.21) in \cite{mr}, the space $\mathcal{N}_{4e}$ is a union of the following sets: \begin{align*}
\mathcal{N}_{4e}^1&=\big\{\hf\phi\in\mathcal{N}_{4e}\mid 3a_\phi^2+4e_2b_\phi>0\hf\big\}\hf,\\
\mathcal{N}_{4e}^2&=\big\{\hf\phi\in\mathcal{N}_{4e}\mid e_1a_\phi^3+2e_1e_2a_\phi b_\phi+e_3c_\phi>0\hf\big\}\hw\mbox{and}\\
\mathcal{N}_{4e}^3&=\big\{\hf\phi\in\mathcal{N}_{4e}\mid e_1a_\phi^3+2e_1e_2a_\phi b_\phi+e_3c_\phi<0\hf\big\}\hf.
\end{align*}\noindent One can check that: (a) $\Gamma_{s\phi}\in\mathcal{N}_{4e}^1$ for all $(s,\phi)\in\ozo\times\mathcal{N}_{4e}^1$; (b) $\Gamma_{s\phi}\in\mathcal{N}_{4e}^2$ for all $(s,\phi)\in\ozo\times\mathcal{N}_{4e}^2$; (c) $\Gamma_{s\phi}\in\mathcal{N}_{4e}^3$ for all $(s,\phi)\in\ozo\times\mathcal{N}_{4e}^3$; (d) $\Gamma_{0\phi}=\phi$ and $\Gamma_{1\phi}\in\mathcal{N}_{4e}^1$ for all $\phi\in\mathcal{N}_{4e}$. Thus, the image of $\Gamma$ is contained in $\mathcal{N}_{4e}$, and we have $\Gamma(0,\phi)=\phi$ and $\Gamma(1,\phi)\in\mathcal{N}_{4e}^1$ for all $\phi\in\mathcal{N}_{4e}$. This shows that the space $\mathcal{N}_{4e}^1$ is a weak deformation retract of the space $\mathcal{N}_{4e}$. Let $\Lambda:\zo\times\mathcal{N}_{4e}^1\to\mathcal{N}_{4e}^1$ be a map given by $(s,\phi)\mapsto\Lambda_{s\phi}$, where 
\begin{equation*}
\Lambda_{s\phi}(t)=\big(\hp e_1t^2+(a_\phi-a_\phi s)\hp t, e_2t^3+(b_\phi-2b_\phi s+b_\phi s^2+e_2s)\hp t, e_3t^4+(c_\phi-c_\phi s)\hp t\hp\big)
\end{equation*} 
\noindent for $t\in\ro$. Note that the map $\Lambda$ is continuous. Let $\phi_0$ be a fixed element in $\mathcal{Q}_{4e}$ which is given by $t\mapsto(\hp e_1t^2, e_2t^3+e_2t, e_3t^4\hp)$. Note that $\Lambda(0,\phi)=\phi$ and $\Lambda(1,\phi)=\phi_0$ for all $\phi\in\mathcal{N}_{4e}^1$. Therefore, the space $\mathcal{N}_{4e}^1$ is contractible, and hence by Lemma \ref{th5.6} and Remark \ref{rm5.1}, the space $\mathcal{Q}_{4e}$ is also contractible.
\end{proof}

For $e=(e_1,e_2,e_3)$ in $\cmoo^3$, let
\begin{equation*}
\mathcal{Q}_{5e}=\big\{\hs \phi\in\mathcal{Q}_5\mid\,e_1a_{\phi3}>0,\, e_2b_{\phi4}>0\ho\mbox{and}\ho e_3c_{\phi5}>0\hs\big\}\hp.
\end{equation*} 
\noindent Note that the space $\mathcal{Q}_5$ is disjoint union of the spaces $\mathcal{Q}_{5e}$ for $e\in\cmoo^3$.

\begin{proposition}\label{th5.14}
For $e=(e_1,e_2,e_3)\in\cmoo^3$, let $p_e=e_1e_2e_3$. Then we have the following:
\begin{enumerate}[(1)]
\item If $p_e=1$, then the space $\mathcal{Q}_{5e}$ contains only one path component corresponding to the trefoil knot. Moreover, this path component is contractible. The space $\mathcal{Q}_{5e}$ does not contain any polynomial knot which represent the mirror image of the trefoil.
\item If $p_e=-1$, then the space $\mathcal{Q}_{5e}$ contains only one path component corresponding to the mirror image of the trefoil, and this path component is contractible. The space $\mathcal{Q}_{5e}$ does not contain any polynomial knot which represent the trefoil knot.
\end{enumerate}
\end{proposition}

\begin{proof}
Let $\mathcal{K}_{3,4,5}$ be the space of all polynomial knots of the type
\begin{equation*}
t\mapsto\left(t^3+a_2t^2+a_1t,\hs t^4+b_3t^3+b_2t^2+b_1t,\hs t^5+c_4t^4+c_3t^3+c_2t^2+c_1t\right)\hp.
\end{equation*} 
\noindent By Proposition 4-9 in \cite{sky}, we have the following: 
\begin{enumerate}[(a)]
\item The space $\mathcal{K}_{3,4,5}$  contains only one path component which corresponds to the trefoil knot. Moreover, this path component is contractible.
\item The space $\mathcal{K}_{3,4,5}$ does not contain any polynomial knot which represent the mirror image of the trefoil.
\end{enumerate}
\noindent For $e\in\cmoo^3$, let $F:\mathcal{Q}_{5e}\to\mathcal{K}_{3,4,5}$ and $H:\mathcal{K}_{3,4,5}\to\mathcal{Q}_{5e}$ be the maps given by
\begin{equation*}
F(\phi)=\left(\dfrac{f_\phi-a_{\phi0}}{a_{\phi3}},\hs \frac{g_\phi-b_{\phi0}}{b_{\phi4}},\hs \frac{h_\phi-c_{\phi0}}{b_{\phi5}}\right)\quad\text{and}\quad 
H(\psi)=\left(e_1f_\psi,\hs e_2g_\psi,\hs e_3h_\psi\right)\hp,
\end{equation*}
\noindent for $\phi\in\mathcal{Q}_{5e}$ and $\psi\in\mathcal{K}_{3,4,5}$. It is easy to note the following statements:
\begin{enumerate}[(a)]\setcounter{enumi}{2}
\item If $p_e=1$, then a polynomial knot $\phi\in\mathcal{Q}_{5e}$ represents the trefoil knot if and only if the polynomial knot $F(\phi)\in\mathcal{K}_{3,4,5}$ represents the same knot. The space $\mathcal{Q}_{5e}$ does not contain any polynomial knot which represent the mirror image of the trefoil (since by the statement (b) above). 
\item If $p_e=-1$, then a polynomial knot $\phi\in\mathcal{Q}_{5e}$ represents the mirror image the trefoil if and only if the polynomial knot $F(\phi)\in\mathcal{K}_{3,4,5}$ represents the trefoil knot. The space $\mathcal{Q}_{5e}$ does not contain any polynomial knot which represent the trefoil knot (since by the statement (b) above).
\end{enumerate}
\noindent Note that the map $F\circ H$ is the identity map of $\mathcal{K}_{3,4,5}$. Also, it can be easily checked that the map $H\circ F$ is homotopy equivalent to the identity map of the space $\mathcal{Q}_{5e}$. Thus, we have the following: 
\begin{enumerate}[(a)]\setcounter{enumi}{4}
\item The space $\mathcal{Q}_{5e}$ is homotopy equivalent to the space $\mathcal{K}_{3,4,5}$.
\end{enumerate}
\noindent The statements (a), (c) and (e) together imply the statement (1) of the proposition, and the statements (a), (d) and (e) together imply the statement (2) of the proposition.
\end{proof}

\begin{proposition}\label{th5.15}
The space $\mathcal{Q}_5$ has exactly eight path components corresponding to the trefoil knot and its mirror image. Moreover, all these path components are contractible.  
\end{proposition}

\begin{proof}
Since the space $\mathcal{Q}_5$ is disjoint union of the spaces $\mathcal{Q}_{5e}$, for $e\in\cmoo^3$, the result follows immediately from Proposition \ref{th5.14}. 
\end{proof}

By Remark \ref{rm5}, the space $\mathcal{Q}_5$ also has at least eight path components corresponding to the unknot. It is not clear that these components are contractible or not. 

%%%%%%%%%%%%%%%%%%%%%%%%%%%%%%%%%%%%%%%%%%%%%%%%%%%%%%%%%%%%%%%%%%%%%%%%%%%%%%%%%%%%%%%%%%%%

\section{Conclusion}\label{sec5}

We have seen that the topology on a set of polynomial knots of degree at most $d$ depends on the coefficients of the component polynomials of knots belonging to that set. If the set of polynomial knots is flexible in the sense that many coefficients of the component polynomials can be zero then the space is path connected. In such a space a polynomial knot that is topologically representing a nontrivial knot can also be joined to a trivial knot by a path in that space. This is possible because the polynomial knots are the long knots (the ends are open). When we have a polynomial knot in such space, by a one parameter family of knots, our knot can slowly shift towards the end and gets opened up to become a trivial knot. For example, a polynomial knot $\phi:\ro\to\rt$ given by 
\begin{align*}
t\mapsto&\hf (51.84, -50.276, 0) + (-164.016, 160.508, -35.843)\hp t + (-31.92, 32.439, 187.195)\hp t^2\\
&\hp + (8.5, -29.11, 11.283)\hp t^3 + (1, -1.5, -19.116)\hp t^4 + (0, 1, -0.48)\hp t^5 + (0, 0, 0.5)\hp t^6
\end{align*} 
\noindent represents the figure-eight knot (see Figure \ref{fig2.1}). This knot is in $\mathcal{Q}_6$ and by Theorem \ref{th13}, it cannot be joined to a trivial knot by a path in $\mathcal{Q}_6$. However $\phi$ is an element of the space $\mathcal{O}_6$ also. In this space, we have a path $F:\zo\to\mathcal{O}_6$ given by $s\mapsto F_s$, where
\begin{align*}
F_s(t)&= (0, 0, 0.5)\hp(1 - s)^5 t^6 + (0, 1, -0.48)\hp(1 - s)^4 t^5 + (1, -1.5, -19.116)\hp(1 - s)^3 t^4\\ 
&\hskip4mm+ (8.5, -29.11, 11.283)\hp(1 - s)^2 t^3 + (-31.92, 32.439, 187.195)\hp(1 - s) t^2\\
&\hskip4mm+ (-164.016, 160.508, -35.843)\hp t + (51.84, -50.276, 0)\hp(1 - s)\hp.
\end{align*}
This path joins the polynomial knot $\phi$ to an unknot $\psi:\ro\to\rt$ given by $t\mapsto(-164.016\hp t, 160.508\hp t, -35.843\hp t)$. A depiction of this path is given in Figure \ref{fig2}. A similar thing happens in the space $\mathcal{P}_6$ also. That is, any non-trivial knot in $\mathcal{P}_6$ can be joined to a trivial knot by a path in $\mathcal{P}_6$.
\begin{figure}[H]
	\centering
	\begin{subfigure}{0.48\textwidth}
		\centering
		\includegraphics[scale=0.35]{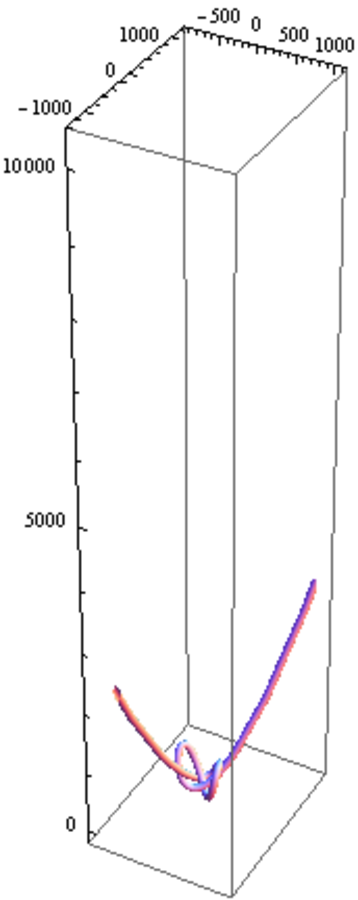}
		\caption{$F_s$ for $s=0$}
		\label{fig2.1}
	\end{subfigure}\quad
	\begin{subfigure}{0.48\textwidth}
		\centering
		\includegraphics[scale=0.34]{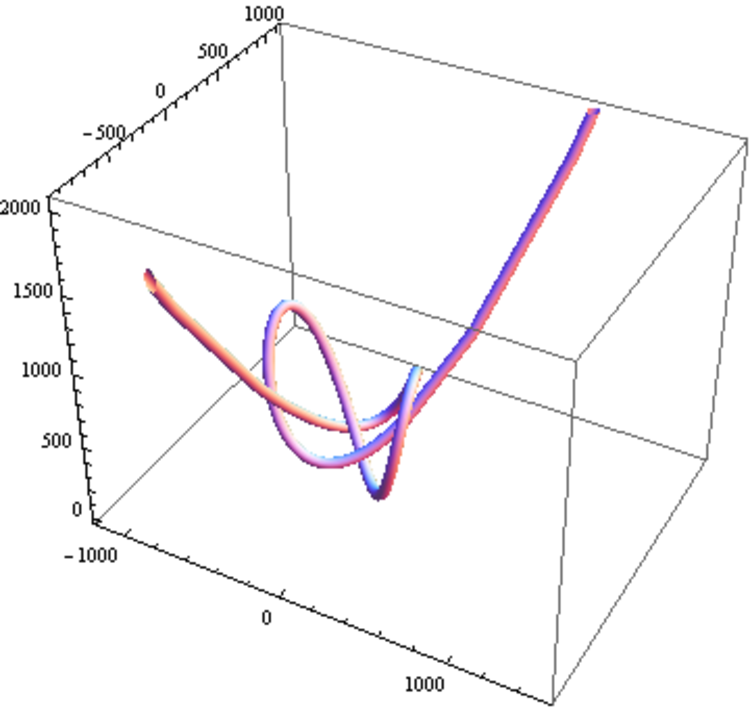}
		\caption{$F_s$ for $s=1/3$}
		\label{fig2.2}
	\end{subfigure}\\[10pt]
	\begin{subfigure}{0.48\textwidth}
		\centering
		\includegraphics[scale=0.3]{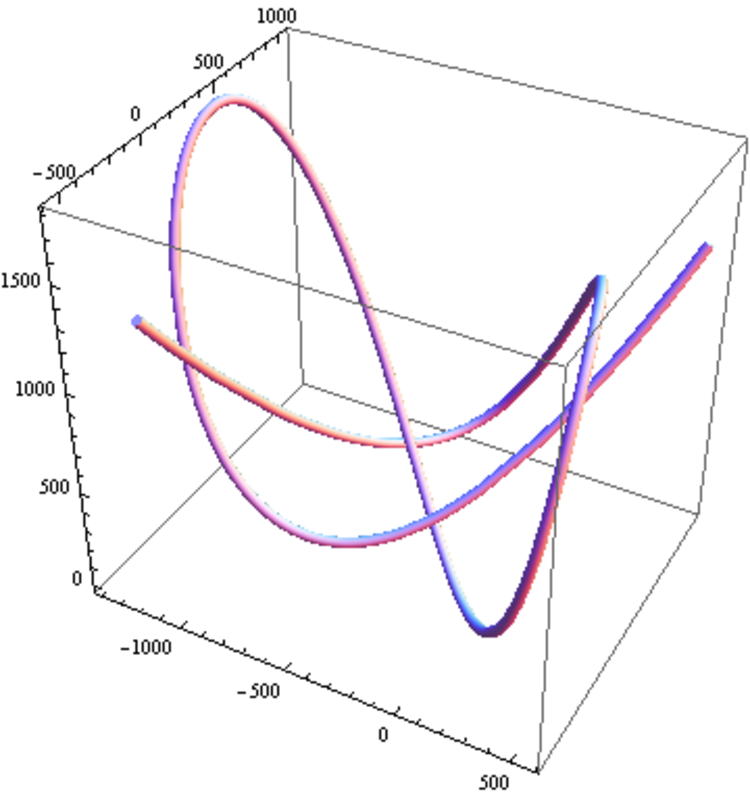}
		\caption{$F_s$ for $s=2/3$}
		\label{fig2.3}
	\end{subfigure}\quad
	\begin{subfigure}{0.48\textwidth}
		\centering
		\includegraphics[scale=0.41]{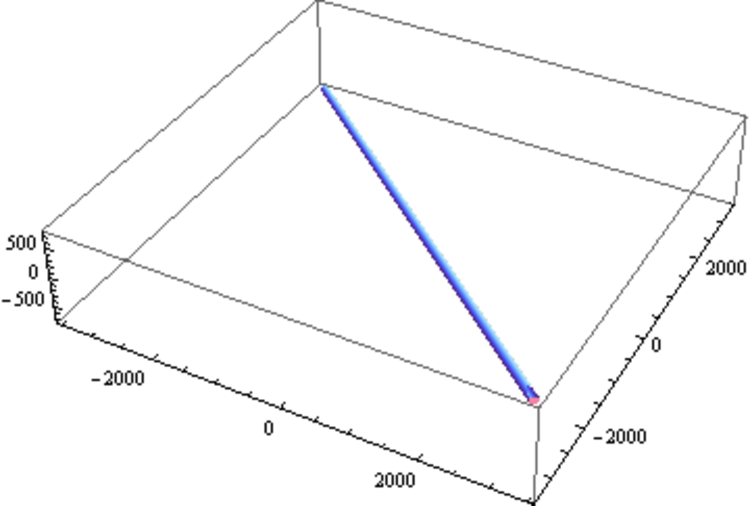}
		\caption{$F_s$ for $s=1$}
		\label{fig2.4}
	\end{subfigure}
	\caption{Depiction of the path $F:\zo\to\od$ connecting a polynomial figure-eight knot $\phi$ to a trivial polynomial knot $\psi$.}
	\label{fig2}
\end{figure}

We have seen that the space $\pd$, for $d\geq3$, is path connected. However, its homotopy type is not known. It will be interesting to explore the homotopy type of this space. The exact number of path components are known for the spaces $\mathcal{Q}_2$, $\mathcal{Q}_3$ and $\mathcal{Q}_4$ and we have proved that they are contractible. Also, we have shown that the path components of the space $\mathcal{Q}_5$ corresponding to the trefoil knot and its mirror image are contractible. Thus, in general, we may guess that the path components of the space $\qd$, for $d\geq5$, are contractible. Note that the exact number of path components of the space $\qd$, for $d\geq5$, are still not clear.
%%%%%%%%%%%%%%%%%%%%%%%%%%%%%%%%%%%%%%%%%%%%%%%%%%%%%%%%%%%%%%%%%%%%%%%%%%%%%%%%%%%%%%%%%%%%%%%%%

\section*{Acknowledgment}\label{sec6}

The first author is thankful to the University Grants Commission, India for the support of Research Fellowship for his PhD work. 
%%%%%%%%%%%%%%%%%%%%%%%%%%%%%%%%%%%%%%%%%%%%%%%%%%%%%%%%%%%%%%%%%%%%%%%%%%%%%%%%%%%%%%%%%%%%%%%%%

%%%%%%%%%%%%%%%%%%%%%%%%%%%%%%%%%%%%%%%%%%%%%%%%%%%%%%%%%%%%%%%%%%%%%%%%%%%%%%%%%%%%%%%%%%%%%%%%%%%

\end{document}